\documentclass{amsart}
\usepackage{ytableau}

\newtheorem{thm}{Theorem}[section]
\newtheorem{lem}[thm]{Lemma}
\newtheorem{prop}[thm]{Proposition}
\newtheorem{cor}[thm]{Corollary}
\theoremstyle{definition}
\newtheorem{defn}[thm]{Definition}
\theoremstyle{remark}
\newtheorem{rem}[thm]{Remark}

\DeclareMathOperator{\dep}{dep}

\begin{document}

\title{Generating functions for sums of polynomial multiple zeta values}

\author{Minoru Hirose}
\address[Minoru Hirose]{Faculty of Mathematics, Kyushu University
 744, Motooka, Nishi-ku, Fukuoka, 819-0395, Japan}
\email{m-hirose@math.kyushu-u.ac.jp}

\author{Hideki Murahara}
\address[Hideki Murahara]{Nakamura Gakuen University Graduate School,
 5-7-1, Befu, Jonan-ku, Fukuoka, 814-0198, Japan} 
\email{hmurahara@nakamura-u.ac.jp}

\author{Shingo Saito}
\address[Shingo Saito]{Faculty of Arts and Science, Kyushu University,
 744, Motooka, Nishi-ku, Fukuoka, 819-0395, Japan}
\email{ssaito@artsci.kyushu-u.ac.jp}

\keywords{Multiple zeta(-star) values, Symmetric multiple zeta(-star) values, Polynomial multiple zeta(-star) values, Sum formula}
\subjclass[2010]{Primary 11M32; Secondary 05A19}

\begin{abstract}
 The sum formulas for multiple zeta(-star) values and symmetric multiple zeta(-star) values bear a striking resemblance.
 We explain the resemblance in a rather straightforward manner using an identity that involves the Schur multiple zeta values.
 We also obtain the sum formula for polynomial multiple zeta(-star) values in terms of generating functions,
 simultaneously generalizing the sum formulas for multiple zeta(-star) values and symmetric multiple zeta(-star) values.
\end{abstract}

\maketitle

\tableofcontents

\section{Introduction}
\subsection{Multiple zeta(-star) values and their sum formula}
An \emph{index} is a finite sequence of positive integers, including the empty sequence $\emptyset$.
If $\boldsymbol{k}=(k_1,\dots,k_r)$ is an index,
then we define its \emph{weight} by $\lvert\boldsymbol{k}\rvert=k_1+\dots+k_r$ and its \emph{depth} by $\dep\boldsymbol{k}=r$.
We say that an index is \emph{admissible} if either it is empty or its last component is greater than $1$.

If $\boldsymbol{k}=(k_1,\dots,k_r)$ is an admissible index, then we define the \emph{multiple zeta value} and \emph{multiple zeta-star value} by
\[
 \zeta(\boldsymbol{k})=\sum_{1\le m_1<\dots<m_r}\frac{1}{m_1^{k_1}\dotsm m_r^{k_r}},\qquad
 \zeta^{\star}(\boldsymbol{k})=\sum_{1\le m_1\le\dots\le m_r}\frac{1}{m_1^{k_1}\dotsm m_r^{k_r}}
\]
respectively, where we understand that $\zeta(\emptyset)=\zeta^{\star}(\emptyset)=1$.
The multiple zeta(-star) values are known to satisfy a large number of relations, of which one of the most well-known is the \emph{sum formula}.
The sum formula asserts that the multiple zeta(-star) values of fixed weight and depth add up to an integer multiple of the Riemann zeta value:
\begin{thm}[sum formula for multiple zeta(-star) values; Granville~\cite{Gra97}, Zagier]\label{thm:sum_formula_MZV}
 If $r$ is a nonnegative integer and $w$ is an integer with $w\ge r+2$, then
 \[
  \sum_{\substack{k_1+\dots+k_r+a=w\\k_1,\dots,k_r\ge1\\a\ge2}}\zeta(k_1,\dots,k_r,a)=\zeta(w),\qquad
  \sum_{\substack{k_1+\dots+k_r+a=w\\k_1,\dots,k_r\ge1\\a\ge2}}\zeta^{\star}(k_1,\dots,k_r,a)=\binom{w-1}{r}\zeta(w).
 \]
\end{thm}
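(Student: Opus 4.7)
My plan is to reduce both sum formulas to a single generating-function identity. For fixed depth $r$, introduce the formal power series
\[
 A_r(t)=\sum_{\substack{k_1,\dots,k_r\ge1\\a\ge2}}\zeta(k_1,\dots,k_r,a)\,t^{k_1+\cdots+k_r+a},\quad
 A_r^{\star}(t)=\sum_{\substack{k_1,\dots,k_r\ge1\\a\ge2}}\zeta^{\star}(k_1,\dots,k_r,a)\,t^{k_1+\cdots+k_r+a},
\]
so that the sums appearing in the theorem are precisely $[t^w]A_r(t)$ and $[t^w]A_r^{\star}(t)$. The goal is to establish the closed forms
\[
 A_r(t)=\sum_{m\ge 1}\frac{t^{r+2}}{m^{r+1}(m-t)},\qquad
 A_r^{\star}(t)=\sum_{m\ge 1}\sum_{i=0}^{r}\frac{t^{r+2}}{m^{r+1-i}(m-t)^{i+1}}.
\]
Expanding each $1/(m-t)^{i+1}$ as a binomial series and reading off the coefficient of $t^w$ then yields $\zeta(w)$ for the non-star side and $\sum_{i=0}^{r}\binom{w-r-2+i}{i}\zeta(w)=\binom{w-1}{r}\zeta(w)$ for the star side, the latter by the hockey-stick identity.

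The first step is to unfold each multiple zeta(-star) value by its defining nested series, interchange the order of summation (legal term by term in $t$), and evaluate the inner geometric sums in $k_1,\dots,k_r,a$ to arrive at
\[
 A_r(t)=\sum_{1\le m_1<\cdots<m_{r+1}}\Biggl(\prod_{i=1}^{r}\frac{t}{m_i-t}\Biggr)\frac{t^2}{m_{r+1}(m_{r+1}-t)},
\]
together with the analogous expression for $A_r^{\star}(t)$ in which $<$ is replaced by $\le$. The second step is to collapse the $(r+1)$-fold ordered sum into a single sum over $m$. I would proceed by induction on $r$, repeatedly invoking the partial-fraction identity
\[
 \frac{1}{m(m-t)}=\frac{1}{t}\left(\frac{1}{m-t}-\frac{1}{m}\right)
\]
in the innermost summation variable to produce telescoping cancellation along the chain $m_1<\cdots<m_{r+1}$. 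For the star case, each weak inequality $m_i\le m_{i+1}$ splits into a strict inequality and an equality, and the equality contributions, suitably bookkept, should generate exactly the extra terms indexed by $i$ in the conjectured formula.

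The main obstacle is the telescoping step, especially for $A_r^{\star}(t)$, where one must track every pattern of coincidences among the $m_i$ without error. A potentially cleaner alternative is to reduce the star case to the non-star case via the standard expansion $\zeta^{\star}(\boldsymbol{k})=\sum\zeta(\boldsymbol{k}')$ with $\boldsymbol{k}'$ ranging over indices obtained by merging adjacent entries of $\boldsymbol{k}$; this reorganizes the star sum as a weighted combination of non-star sum formulas of smaller depth, and the coefficient $\binom{w-1}{r}$ emerges by a direct count of merging patterns. Either way, the combinatorial heart of the proof is a partial-fraction decomposition of $\prod_i 1/(m_i-t)$ combined with Abel-style summation by parts over the ordered chain of summation indices.
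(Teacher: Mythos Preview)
The paper does not supply a proof of Theorem~\ref{thm:sum_formula_MZV}; it is quoted as a classical result of Granville and Zagier and then used as input (for example in the proof of Proposition~\ref{prop:gen_func_zeta(k,a)}, and indirectly via Theorem~\ref{thm:sumSchur} in the proof of the main theorem). So there is no ``paper's own proof'' against which to compare your proposal.

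Regarding the proposal itself: your target closed forms for $A_r(t)$ and $A_r^{\star}(t)$ are correct in the sense that they are equivalent to the theorem, and your coefficient extraction via the binomial series and the hockey-stick identity is fine. All of the actual content therefore sits in your ``second step,'' and here the sketch does not yet constitute an argument. The identity $\tfrac{1}{m(m-t)}=\tfrac{1}{t}\bigl(\tfrac{1}{m-t}-\tfrac{1}{m}\bigr)$, applied to the outermost variable $m_{r+1}$, does not collapse
\[
 \sum_{1\le m_1<\cdots<m_{r+1}}\Biggl(\prod_{i=1}^{r}\frac{t}{m_i-t}\Biggr)\frac{t^2}{m_{r+1}(m_{r+1}-t)}
\]
to $\sum_{m}\frac{t^{r+2}}{m^{r+1}(m-t)}$: the inner sum $\sum_{m_1<\cdots<m_r<m}\prod_{i}\tfrac{t}{m_i-t}$ has no closed form in $m$, and no cancellation along the chain becomes visible from that identity alone. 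Granville's argument (and the standard variants) hinge on a subtler recursion, and your outline would need to supply that missing mechanism before it becomes a proof. Your fallback for the star case---expanding $\zeta^{\star}$ as a sum of $\zeta$'s over mergings and counting---is a valid reduction once the non-star formula is established, but it inherits the same gap.
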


\subsection{Regularization for multiple zeta(-star) values}
Let $\mathcal{Z}$ denote the $\mathbb{Q}$-linear space spanned by the multiple zeta values.
As illustrated by
\begin{align*}
 \zeta^{\star}(k_1,k_2)
 &=\sum_{1\le m_1\le m_2}\frac{1}{m_1^{k_1}m_2^{k_2}}
  =\Biggl(\sum_{1\le m_1<m_2}+\sum_{1\le m_1=m_2}\Biggr)\frac{1}{m_1^{k_1}m_2^{k_2}}\\
 &=\zeta(k_1,k_2)+\zeta(k_1+k_2),
\end{align*}
the multiple zeta-star values are sums of multiple zeta values and therefore belong to $\mathcal{Z}$.
Moreover, as illustrated by
\begin{align*}
 \zeta(k)\zeta(l)
 &=\Biggl(\sum_{m=1}^{\infty}\frac{1}{m^k}\Biggr)\Biggl(\sum_{m=1}^{\infty}\frac{1}{m^l}\Biggr)
  =\Biggl(\sum_{1\le m_1<m_2}+\sum_{1\le m_2<m_1}+\sum_{1\le m_1=m_2}\Biggr)\frac{1}{m_1^km_2^l}\\
 &=\zeta(k,l)+\zeta(l,k)+\zeta(k+l),
\end{align*}
the space $\mathcal{Z}$ is closed under multiplication, thereby being a $\mathbb{Q}$-algebra.
Ihara, Kaneko, and Zagier~\cite{IKZ06} employed a method, called \emph{regularization}, for defining the multiple zeta(-star) values for non-admissible indices as elements of the polynomial algebra $\mathcal{Z}[T]$,
by assuming that those relations illustrated above hold even for non-admissible indices and setting $\zeta(1)=T$.
For example, by
\[
 \zeta^{\star}(2,1)=\zeta(2,1)+\zeta(3),\qquad
 \zeta(2)\zeta(1)=\zeta(2,1)+\zeta(1,2)+\zeta(3),
\]
we infer that $\zeta(2,1)=\zeta(2)T-\zeta(1,2)-\zeta(3)$ and $\zeta^{\star}(2,1)=\zeta(2)T-\zeta(1,2)$ in $\mathcal{Z}[T]$.
\begin{rem}
 The reader familiar with regularization is reminded that this paper deals only with harmonic regularization, as opposed to shuffle regularization.
\end{rem}

\subsection{Symmetric multiple zeta(-star) values and their sum formula}
If $\boldsymbol{k}=(k_1,\dots,k_r)$ is an index, then we define
\begin{align*}
 \zeta_{S}(\boldsymbol{k})&=\sum_{i=0}^{r}(-1)^{k_{i+1}+\dots+k_r}\zeta(k_1,\dots,k_i)\zeta(k_r,\dots,k_{i+1}),\\
 \zeta_S^{\star}(\boldsymbol{k})&=\sum_{i=0}^{r}(-1)^{k_{i+1}+\dots+k_r}\zeta^{\star}(k_1,\dots,k_i)\zeta^{\star}(k_r,\dots,k_{i+1}).
\end{align*}
Although $\zeta_{S}(\boldsymbol{k})$ and $\zeta_S^{\star}(\boldsymbol{k})$ a priori belong to $\mathcal{Z}[T]$,
it turns out that they have constant terms only and so belong to $\mathcal{Z}$.
Kaneko and Zagier~\cite{KZ19} defined the \emph{symmetric multiple zeta(-star) values} as $\zeta_S(\boldsymbol{k}),\zeta_S^{\star}(\boldsymbol{k})\bmod\zeta(2)$ in $\mathcal{Z}/\zeta(2)\mathcal{Z}$, and the second author~\cite{Mur15} established the sum formula for symmetric multiple zeta(-star) values:
\begin{thm}[sum formula for symmetric multiple zeta(-star) values; Murahara~\cite{Mur15}]\label{thm:sum_formula_SMZV}
 If $r$ and $s$ are nonnegative integers and $w$ is an integer with $w\ge r+s+2$, then
 \begin{align*}
  \sum_{\substack{k_1+\dots+k_r+a+l_1+\dots+l_s=w\\k_1,\dots,k_r,l_1,\dots,l_s\ge1\\a\ge2}}\zeta_S(k_1,\dots,k_r,a,l_1,\dots,l_s)&\equiv\biggl(-(-1)^r\binom{w-1}{r}+(-1)^s\binom{w-1}{s}\biggr)\zeta(w),\\
  \sum_{\substack{k_1+\dots+k_r+a+l_1+\dots+l_s=w\\k_1,\dots,k_r,l_1,\dots,l_s\ge1\\a\ge2}}\zeta_S^{\star}(k_1,\dots,k_r,a,l_1,\dots,l_s)&\equiv\biggl((-1)^s\binom{w-1}{r}-(-1)^r\binom{w-1}{s}\biggr)\zeta(w)
 \end{align*}
 modulo $\zeta(2)\mathcal{Z}$.
\end{thm}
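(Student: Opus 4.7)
The plan is to open up the definition of $\zeta_S$ directly, interchange summations, and reduce to the sum formula for ordinary multiple zeta values. First I would write the combined index as $\boldsymbol{m}=(m_1,\dots,m_n)$ with $n=r+s+1$ and $m_{r+1}=a$, and expand
\[
 \zeta_S(\boldsymbol{m})=\sum_{i=0}^{n}(-1)^{m_{i+1}+\dots+m_n}\,\zeta(m_1,\dots,m_i)\,\zeta(m_n,\dots,m_{i+1}).
\]
Swapping this sum with the outer sum over $\boldsymbol{m}$, I would partition the split positions into three blocks $0\le i\le r$, $i=r+1$, and $r+1<i\le n$, according to whether the distinguished entry $a\ge 2$ falls in the reversed-suffix factor, coincides with the split itself, or falls in the prefix factor. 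In each block the inner double sum factorises once the placement of $a$ is tracked, and each factor becomes a sum of $\zeta$-values over positive compositions of prescribed weight and depth, with at most one entry constrained to be $\ge 2$.

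Next I would evaluate those factors via Theorem~\ref{thm:sum_formula_MZV} together with harmonic regularization for the non-admissible sums, which produces elements of $\mathcal{Z}[T]$. Because $\zeta_S(\boldsymbol{m})$ itself is known to have constant term only in $T$, all $T$-dependent pieces coming from the non-admissible sums must cancel when the three blocks are combined; this is the cleanest way to dispose of the non-admissible contributions without evaluating them individually. Modulo $\zeta(2)\mathcal{Z}$, the surviving constant-term contributions reduce to multiples of $\zeta(w)$: products $\zeta(u)\zeta(w-u)$ expand via the harmonic relation, and the resulting double zeta values collapse against $\zeta(2)\mathcal{Z}$ using their classical parity reductions.

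The residue is then a combinatorial identity: an alternating sum over the split position $i$ of signed binomial coefficients $\pm\binom{w-1}{i}$, restricted to $0\le i\le r$ and to $r+1\le i\le n$. A Vandermonde-type collapse should leave only the two boundary contributions $-(-1)^r\binom{w-1}{r}\zeta(w)$ and $(-1)^s\binom{w-1}{s}\zeta(w)$, yielding the $\zeta_S$ formula. For the starred version I would run the same argument with $\zeta^\star$ substituted for $\zeta$ throughout and invoke the $\zeta^\star$ half of Theorem~\ref{thm:sum_formula_MZV}; the binomial coefficient $\binom{w-1}{r}$ appearing there in place of the plain $1$ directly produces the swap of $r$ and $s$ visible in the statement. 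Alternatively one can deduce the starred formula from the unstarred one via the standard signed-sum expression of $\zeta^\star$ in terms of $\zeta$. The main obstacle I foresee is the regularization bookkeeping: verifying that the $T$-dependent pieces from the three blocks cancel exactly requires careful control of which factor is admissible for each split index, and the subsequent alternating-sum identity, while elementary, also demands careful tracking of signs and binomials.
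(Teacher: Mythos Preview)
Your plan has two genuine gaps. First, when the split index $i$ lies in $\{0,\dots,r\}$, the entry $a\ge2$ sits in an \emph{interior} position of the reversed-suffix factor, so the sum you must evaluate is
\[
\sum_{\substack{l_s+\dots+l_1+a+k_r+\dots+k_{i+1}=v\\l_j,k_j\ge1,\ a\ge2}}\zeta(l_s,\dots,l_1,a,k_r,\dots,k_{i+1}),
\]
and Theorem~\ref{thm:sum_formula_MZV} does not cover this; evaluating such interior-constraint sums is essentially Corollary~\ref{cor:gen_func_zeta(k,a,l)}, an \emph{output} of the paper rather than an input. Second, and more seriously, once both factors are computed you get genuine products such as $\zeta(u)\zeta(w-u)$ with $u,w-u\ge2$. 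Your claim that these ``collapse against $\zeta(2)\mathcal{Z}$ using their classical parity reductions'' fails for even $w$: e.g.\ $\zeta(3)\zeta(5)$ is (conjecturally) not in $\mathbb{Q}\zeta(8)+\zeta(2)\mathcal{Z}$. These products must cancel across different split positions $i$, and your outline provides no mechanism for that cancellation.

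The paper sidesteps both issues by a different decomposition. Proposition~\ref{prop:relation_between_sum_formulas} (derived from Lemma~\ref{lem:key} via the Schur anti-hook elements) rewrites $\zeta_S(\boldsymbol{k},a,\boldsymbol{l})$ as a sum of terms of the shape $(\text{Schur anti-hook MZV})\cdot\zeta_S(\text{subindex})$ rather than $\zeta(\cdot)\cdot\zeta(\cdot)$. After summing over all indices, the Schur factor is evaluated by Theorem~\ref{thm:sumSchur}, while the $\zeta_S$ factor is a full symmetric sum, which vanishes modulo $\zeta(2)\mathcal{Z}$ whenever the depth is positive (Proposition~\ref{prop:gen_func_zeta_S(k)}). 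Only the depth-zero boundary terms survive, giving the stated binomial coefficients immediately. The crucial idea you are missing is to regroup so that one factor is a $\zeta_S$ rather than a $\zeta$; this is what makes the unwanted products disappear for free.
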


\begin{rem}
 Note that our convention on the order of arguments is opposite to that of \cite{Mur15}.
\end{rem}

Although Theorems~\ref{thm:sum_formula_MZV} and \ref{thm:sum_formula_SMZV} bear a striking resemblance,
no good reason has been offered thus far.
We shall give an identity (Proposition~\ref{prop:relation_between_sum_formulas}) that together with a generalization of Theorem~\ref{thm:sum_formula_MZV} implies Theorem~\ref{thm:sum_formula_SMZV},
and so we have probably succeeded in explaining the resemblance to some extent.

\subsection{Restatement of the sum formulas in terms of generating functions}
We restate Theorems~\ref{thm:sum_formula_MZV} and \ref{thm:sum_formula_SMZV} in terms of generating functions.
Define
\[
 \psi_1(W)=\sum_{k=2}^{\infty}\zeta(k)W^{k-1}\in\mathcal{Z}[[W]].
\]

\begin{rem}
 Our $\psi_1(W)$ is reminiscent of the digamma function, which satisfies
 \[
  \psi(z+1)=-\gamma-\sum_{k=2}^{\infty}\zeta(k)(-z)^{k-1}.
 \]
\end{rem}

Then Theorems~\ref{thm:sum_formula_MZV} and \ref{thm:sum_formula_SMZV} can be rephrased as follows (proofs will be given in Subsection~\ref{subsec:proof_props}):
\begin{prop}\label{prop:gen_func_zeta(k,a)}
 We have
 \begin{align*}
  \sum_{\substack{\boldsymbol{k}\\a\ge2}}\zeta(\boldsymbol{k},a)A^{\dep\boldsymbol{k}}W^{\lvert\boldsymbol{k}\rvert+a}&=\frac{W}{1-A}(\psi_1(W)-\psi_1(AW)),\\
  \sum_{\substack{\boldsymbol{k}\\a\ge2}}\zeta^{\star}(\boldsymbol{k},a)A^{\dep\boldsymbol{k}}W^{\lvert\boldsymbol{k}\rvert+a}&=W(\psi_1((1+A)W)-\psi_1(AW))
 \end{align*}
 in $\mathcal{Z}[A][[W]]$.
\end{prop}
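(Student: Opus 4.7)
The plan is to verify both identities by comparing the coefficient of $A^r W^w$ on each side, for arbitrary nonnegative integers $r$ and $w$. On the left-hand sides, this coefficient is precisely the sum appearing in Theorem~\ref{thm:sum_formula_MZV}: namely $\zeta(w)$ for the first identity (when $w\ge r+2$, and zero otherwise) and $\binom{w-1}{r}\zeta(w)$ for the second (again when $w\ge r+2$). Thus the task reduces to expanding each right-hand side and reading off matching coefficients.

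For the first identity, I would rewrite
\[
 \psi_1(W)-\psi_1(AW)=\sum_{k\ge 2}\zeta(k)W^{k-1}(1-A^{k-1}),
\]
so that dividing by $1-A$ yields $\sum_{k\ge 2}\zeta(k)W^{k-1}(1+A+\dots+A^{k-2})$. After multiplying by $W$ and interchanging the order of summation, one obtains $\sum_{r\ge 0}A^r\sum_{w\ge r+2}\zeta(w)W^w$, matching the left-hand side by Theorem~\ref{thm:sum_formula_MZV}.

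For the second identity, the binomial theorem gives $(1+A)^{k-1}-A^{k-1}=\sum_{r=0}^{k-2}\binom{k-1}{r}A^r$, so that $W(\psi_1((1+A)W)-\psi_1(AW))$ rearranges to $\sum_{r\ge 0}A^r\sum_{w\ge r+2}\binom{w-1}{r}\zeta(w)W^w$, matching the second half of Theorem~\ref{thm:sum_formula_MZV}. There is no genuine obstacle in this argument—it is essentially bookkeeping in formal power series—but one should observe that the vanishing of the left-hand sums when $w<r+2$ (forced by an admissible index of depth $r+1$ having weight at least $r+2$) is mirrored on the right by the fact that the polynomial factors $(1-A^{k-1})/(1-A)$ and $(1+A)^{k-1}-A^{k-1}$ have $A$-degree exactly $k-2$, which suppresses contributions from $r>k-2$.
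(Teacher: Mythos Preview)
Your proof is correct and follows essentially the same approach as the paper's: both arguments invoke Theorem~\ref{thm:sum_formula_MZV} to identify the coefficient of $A^rW^w$ on the left-hand side, and both reduce the right-hand side via the geometric sum $(1-A^{w-1})/(1-A)=\sum_{r=0}^{w-2}A^r$ (first identity) and the binomial expansion $(1+A)^{w-1}-A^{w-1}=\sum_{r=0}^{w-2}\binom{w-1}{r}A^r$ (second identity). The paper merely runs the computation in the opposite direction---starting from the left-hand side, applying the sum formula, and simplifying toward the $\psi_1$ expression---but the algebraic content is identical.
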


\begin{prop}\label{prop:gen_func_zeta_fin(k,a,l)}
 We have
 \begin{align*}
  &\sum_{\substack{\boldsymbol{k},\boldsymbol{l}\\a\ge2}}\zeta_S(\boldsymbol{k},a,\boldsymbol{l})A^{\dep\boldsymbol{k}}B^{\dep\boldsymbol{l}}W^{\lvert\boldsymbol{k}\rvert+a+\lvert\boldsymbol{l}\rvert}\\
  &\equiv-\frac{W}{1-B}(\psi_1((1-A)W)-\psi_1((B-A)W))+\frac{W}{1-A}(\psi_1((1-B)W)-\psi_1((A-B)W)),\\
  &\sum_{\substack{\boldsymbol{k},\boldsymbol{l}\\a\ge2}}\zeta_S^{\star}(\boldsymbol{k},a,\boldsymbol{l})A^{\dep\boldsymbol{k}}B^{\dep\boldsymbol{l}}W^{\lvert\boldsymbol{k}\rvert+a+\lvert\boldsymbol{l}\rvert}\\
  &\equiv\frac{W}{1+B}(\psi_1((1+A)W)-\psi_1((A-B)W))-\frac{W}{1+A}(\psi_1((1+B)W)-\psi_1((B-A)W))
 \end{align*}
 modulo $\zeta(2)\mathcal{Z}$ in $\mathcal{Z}[A,B][[W]]$.
\end{prop}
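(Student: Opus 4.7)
The plan is to derive Proposition~\ref{prop:gen_func_zeta_fin(k,a,l)} directly from Theorem~\ref{thm:sum_formula_SMZV} by regrouping each generating function according to $(r,s,w):=(\dep\boldsymbol{k},\dep\boldsymbol{l},\lvert\boldsymbol{k}\rvert+a+\lvert\boldsymbol{l}\rvert)$ and evaluating the resulting closed-form triple sum. The LHS of the first identity becomes
\[
 \sum_{r,s\ge 0}A^r B^s\sum_{w\ge r+s+2}W^w\sum_{\substack{\dep\boldsymbol{k}=r,\,\dep\boldsymbol{l}=s\\ \lvert\boldsymbol{k}\rvert+a+\lvert\boldsymbol{l}\rvert=w,\,a\ge 2}}\zeta_S(\boldsymbol{k},a,\boldsymbol{l}),
\]
and applying Theorem~\ref{thm:sum_formula_SMZV} to the innermost sum splits the whole expression into two pieces that are exchanged by the symmetry $(A,r)\leftrightarrow(B,s)$.

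I would then treat one of these pieces, namely
\[
 T_1 := -\sum_{r,s\ge 0}\sum_{w\ge r+s+2}(-A)^r B^s\binom{w-1}{r}\zeta(w)W^w.
\]
For fixed $w\ge 2$, summing $s$ from $0$ to $w-r-2$ yields the geometric factor $(1-B^{w-1-r})/(1-B)$, and the remaining $r$-sum can be read off from the two binomial identities $\sum_{r=0}^{w-1}(-A)^r\binom{w-1}{r}=(1-A)^{w-1}$ and $\sum_{r=0}^{w-1}(-A)^r B^{w-1-r}\binom{w-1}{r}=(B-A)^{w-1}$. The boundary terms at $r=w-1$ both equal $(-A)^{w-1}$ and therefore cancel in the subtraction, leaving
\[
 T_1 = -\frac{W}{1-B}\bigl(\psi_1((1-A)W)-\psi_1((B-A)W)\bigr)
\]
after extracting the factor $W$ and invoking the definition of $\psi_1$. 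The companion piece $T_2$ follows from $T_1$ via the $(A,r)\leftrightarrow(B,s)$ symmetry and supplies the remaining summand on the RHS.

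The $\zeta_S^{\star}$ identity is handled in exactly the same way, starting from the starred formula of Theorem~\ref{thm:sum_formula_SMZV}; the binomial evaluations needed there are $\sum_{r=0}^{w-1}A^r\binom{w-1}{r}=(1+A)^{w-1}$ and $\sum_{r=0}^{w-1}A^r(-B)^{w-1-r}\binom{w-1}{r}=(A-B)^{w-1}$, which accounts for the replacement of $1-A$ by $1+A$ on the RHS. I do not anticipate any genuine obstacle: the only point meriting a moment of care is the bookkeeping of the $r=w-1$ terms excluded by the constraint $s\ge 0$, whose cancellation between the two binomial identities is precisely what produces the clean $\psi_1$ expression in both the plain and starred cases.
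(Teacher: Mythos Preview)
Your proposal is correct and follows essentially the same route as the paper: regroup the generating function by $(r,s,w)$, apply Theorem~\ref{thm:sum_formula_SMZV}, perform the geometric sum in one depth variable, and then use the binomial identities (with the $r=w-1$ boundary terms cancelling) to collapse the remaining sum into $\psi_1$-expressions. The paper's proof carries out exactly this computation, including the same handling of the boundary terms you flag as the one point needing care.
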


\subsection{Polynomial multiple zeta(-star) values}
If $\boldsymbol{k}=(k_1,\dots,k_r)$ is an index, then the authors (\cite{HMS20}) defined the \emph{polynomial multiple zeta(-star) value} by
\begin{align*}
 \zeta_{x,y}(\boldsymbol{k})&=\sum_{i=0}^{r}\zeta(k_1,\dots,k_i)\zeta(k_r,\dots,k_{i+1})x^{k_1+\dots+k_i}y^{k_{i+1}+\dots+k_r}\in\mathcal{Z}[T][x,y],\\
 \zeta_{x,y}^{\star}(\boldsymbol{k})&=\sum_{i=0}^{r}\zeta^{\star}(k_1,\dots,k_i)\zeta^{\star}(k_r,\dots,k_{i+1})x^{k_1+\dots+k_i}y^{k_{i+1}+\dots+k_r}\in\mathcal{Z}[T][x,y].
\end{align*}
Notice that the polynomial multiple zeta(-star) values are a common generalization of $\zeta^{(\star)}(\boldsymbol{k})$ and $\zeta_S^{(\star)}(\boldsymbol{k})$:
\[
 \zeta_{1,0}(\boldsymbol{k})=\zeta(\boldsymbol{k}),\quad
 \zeta_{1,0}^{\star}(\boldsymbol{k})=\zeta^{\star}(\boldsymbol{k}),\quad
 \zeta_{1,-1}(\boldsymbol{k})=\zeta_S(\boldsymbol{k}),\quad
 \zeta_{1,-1}^{\star}(\boldsymbol{k})=\zeta_S^{\star}(\boldsymbol{k}).
\]

\subsection{Main theorem}
Our main theorem computes the generating functions in Proposition~\ref{prop:gen_func_zeta_fin(k,a,l)} with $\zeta_S$ replaced by $\zeta_{x,y}$.
To state the theorem, we need to define
\[
 \Gamma_1(W)=\exp\Biggl(\sum_{k=1}^{\infty}\frac{\zeta(k)}{k}W^k\Biggr)\in\mathcal{Z}[T][[W]].
\]

\begin{rem}
 Our $\Gamma_1(W)$ is reminiscent of the gamma function, which satisfies
 \[
  \Gamma(z+1)=\exp\Biggl(-\gamma z+\sum_{k=2}^{\infty}\frac{\zeta(k)}{k}(-z)^k\Biggr).
 \]
 Note also that $\exp(-TW)\Gamma_1(W)\in\mathcal{Z}[[W]]$, and that
 \[
  A(W)=\exp(TW)\Gamma_1(-W)=\exp\Biggl(\sum_{k=2}^{\infty}\frac{(-1)^k}{k}\zeta(k)W^k\Biggr)
 \]
 played an essential role in the regularization theorem due to Ihara, Kaneko, and Zagier~\cite{IKZ06}.
\end{rem}

\begin{thm}[Main theorem; Theorem~\ref{thm:gen_func_zeta_xy(k,a,l)}]\label{thm:gen_func_zeta_xy(k,a,l)_intro}
 We have
 \begin{align*}
  &\sum_{\substack{\boldsymbol{k},\boldsymbol{l}\\a\ge2}}\zeta_{x,y}(\boldsymbol{k},a,\boldsymbol{l})A^{\dep\boldsymbol{k}}B^{\dep\boldsymbol{l}}W^{\lvert\boldsymbol{k}\rvert+a+\lvert\boldsymbol{l}\rvert}\\
  &=\frac{yW}{1-B}(\psi_1(y(1-A)W)-\psi_1(y(B-A)W))\frac{\Gamma_1(xW)\Gamma_1(yW)}{\Gamma_1(x(1-A)W)\Gamma_1(y(1-A)W)}\\
  &\hphantom{{}={}}+\frac{xW}{1-A}(\psi_1(x(1-B)W)-\psi_1(x(A-B)W))\frac{\Gamma_1(xW)\Gamma_1(yW)}{\Gamma_1(x(1-B)W)\Gamma_1(y(1-B)W)},\\
  &\sum_{\substack{\boldsymbol{k},\boldsymbol{l}\\a\ge2}}\zeta_{x,y}^{\star}(\boldsymbol{k},a,\boldsymbol{l})A^{\dep\boldsymbol{k}}B^{\dep\boldsymbol{l}}W^{\lvert\boldsymbol{k}\rvert+a+\lvert\boldsymbol{l}\rvert}\\
  &=\frac{yW}{1+A}(\psi_1(y(1+B)W)-\psi_1(y(B-A)W))\frac{\Gamma_1(x(1+A)W)\Gamma_1(y(1+A)W)}{\Gamma_1(xW)\Gamma_1(yW)}\\
  &\hphantom{{}={}}+\frac{xW}{1+B}(\psi_1(x(1+A)W)-\psi_1(x(A-B)W))\frac{\Gamma_1(x(1+B)W)\Gamma_1(y(1+B)W)}{\Gamma_1(xW)\Gamma_1(yW)}
 \end{align*}
 in $\mathcal{Z}[T][x,y][A,B][[W]]$.
\end{thm}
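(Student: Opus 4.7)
My plan is to expand each $\zeta_{x,y}(\boldsymbol{k},a,\boldsymbol{l})$ using its defining sum over a split position $i$ and to regroup terms according to where $i$ falls. With $r=\dep\boldsymbol{k}$ and $s=\dep\boldsymbol{l}$, the $(r+s+2)$ summands split into Case A ($0\le i\le r$, so $a$ lands in the reversed postfix) and Case B ($r+1\le i\le r+s+1$, so $a$ lands in the prefix). The left-hand side is symmetric under $(A,x)\leftrightarrow(B,y)$---this follows from the identity $\zeta_{y,x}(\boldsymbol{m})=\zeta_{x,y}(\overleftarrow{\boldsymbol{m}})$ together with the reversal bijection on the set of indices---and this swap exchanges the two cases, so it suffices to compute Case A.

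For Case A, writing $\boldsymbol{k}=(\boldsymbol{k}',\boldsymbol{k}'')$ with $\dep\boldsymbol{k}'=i$ and renaming $\overleftarrow{\boldsymbol{l}}$, $\overleftarrow{\boldsymbol{k}''}$ as fresh dummy indices, the contribution to the generating function factors as $G(A,xW)\cdot H(B,A,yW)$, where
\[
 G(A,W):=\sum_{\boldsymbol{k}}\zeta(\boldsymbol{k})A^{\dep\boldsymbol{k}}W^{|\boldsymbol{k}|},\qquad H(B,A,W):=\sum_{\substack{\boldsymbol{l},\boldsymbol{k}\\a\ge 2}}\zeta(\boldsymbol{l},a,\boldsymbol{k})B^{\dep\boldsymbol{l}}A^{\dep\boldsymbol{k}}W^{|\boldsymbol{l}|+a+|\boldsymbol{k}|}.
\]
The heart of the argument is to establish the closed forms
\[
 G(A,W)=\frac{\Gamma_1(W)}{\Gamma_1((1-A)W)},\qquad H(B,A,W)=G(A,W)\cdot\frac{W}{1-B}\bigl(\psi_1((1-A)W)-\psi_1((B-A)W)\bigr),
\]
whose product matches the first term on the right-hand side of the theorem exactly. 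The $\star$-version proceeds in parallel: the coarsening identity $\zeta^{\star}(\boldsymbol{k})=\sum_{\boldsymbol{k}'\text{ coarsens to }\boldsymbol{k}}\zeta(\boldsymbol{k}')$ yields $G^{\star}(A,W)=\Gamma_1((1+A)W)/\Gamma_1(W)$ from $G$ via the substitution $(A,W)\mapsto(A/(1+A),(1+A)W)$, and $H^{\star}(B,A,W)=G^{\star}(A,W)\cdot\frac{W}{1+A}\bigl(\psi_1((1+B)W)-\psi_1((B-A)W)\bigr)$ is the analogous star formula.

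The identity for $G$ can be proven by matching the logarithmic derivatives in $A$ of both sides, which reduces to harmonic-product relations among single zetas (with $G(0,W)=1$ as the base case). The main obstacle is the identity for $H$: while the simpler generating function $\sum_{\boldsymbol{l},\boldsymbol{k}}\zeta(\boldsymbol{l},\boldsymbol{k})B^{\dep\boldsymbol{l}}A^{\dep\boldsymbol{k}}W^{|\boldsymbol{l}|+|\boldsymbol{k}|}$ collapses via a geometric sum over the split position to $\tfrac{BG(B,W)-AG(A,W)}{B-A}$, isolating the contribution with a distinguished middle entry $a\ge 2$ calls for a careful harmonic-product argument---in particular, for computing the "$a=1$" contribution using the expansion of $\zeta(1)\cdot\zeta(\boldsymbol{n})$---combined with Proposition \ref{prop:gen_func_zeta(k,a)}, which emerges as the $A=0$ specialization of the $H$-formula and provides an inductive anchor for the calculation.
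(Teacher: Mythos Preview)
Your reduction is clean and correct: the symmetry $\zeta_{y,x}(\boldsymbol{m})=\zeta_{x,y}(\overleftarrow{\boldsymbol{m}})$ does make the left-hand side invariant under $(A,x)\leftrightarrow(B,y)$, and the Case~A contribution genuinely factors as $G(A,xW)\cdot H(B,A,yW)$ with $G$ and $H$ as you define them. The formula $G(A,W)=\Gamma_1(W)/\Gamma_1((1-A)W)$ is Proposition~\ref{prop:gen_func_zeta(k)} and causes no trouble.

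The gap is the formula for $H$. What you are asserting,
\[
 H(B,A,W)=\frac{\Gamma_1(W)}{\Gamma_1((1-A)W)}\cdot\frac{W}{1-B}\bigl(\psi_1((1-A)W)-\psi_1((B-A)W)\bigr),
\]
is precisely Corollary~\ref{cor:gen_func_zeta(k,a,l)} (with the roles of $A$ and $B$ swapped), and in the paper that corollary is \emph{derived from} the main theorem by setting $x=1$, $y=0$. So you cannot cite it, and the sketch you give does not supply an independent proof. The observation that $\sum_{\boldsymbol{l},a\ge1,\boldsymbol{k}}\zeta(\boldsymbol{l},a,\boldsymbol{k})B^{\dep\boldsymbol{l}}A^{\dep\boldsymbol{k}}W^{\cdots}=\frac{G(B,W)-G(A,W)}{B-A}$ is fine, but subtracting the $a=1$ part is the whole difficulty: the harmonic expansion of $\zeta(1)\cdot\zeta(\boldsymbol{m})$ only controls $\sum_i\zeta(\boldsymbol{m}_i,1,\boldsymbol{m}^i)$ with a \emph{single} depth variable, not with independent weights $B^{i}A^{\dep\boldsymbol{m}-i}$, so it does not yield the two-variable $a=1$ generating function. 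Nor is there an evident induction for which Proposition~\ref{prop:gen_func_zeta(k,a)} (the $A=0$ case of $H$) serves as an anchor.

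In fact your $H$-formula is equivalent in strength to the Schur anti-hook sum formula (Theorem~\ref{thm:sumSchur}): applying $Z$ to Lemma~\ref{lem:alternating3} gives $\zeta(\boldsymbol{k},a,\boldsymbol{l})=\sum_{j}(-1)^j\zeta\!\left(\begin{smallmatrix}\boldsymbol{k}\\\overleftarrow{\boldsymbol{l}_j}\end{smallmatrix};a\right)\zeta(\boldsymbol{l}^j)$, whence $H(B,A,W)=Z(F_{\mathcal{I}}(B,-A,W))\cdot G(A,W)$, and the closed form for $Z(F_{\mathcal{I}})$ is exactly Proposition~\ref{prop:sumSchurgen}, i.e.\ the Bachmann--Kadota--Suzuki--Yamamoto--Yamasaki sum formula. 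The paper's route is to establish the factorization in $\mathcal{I}$ first (Proposition~\ref{prop:gen_func_[k,a,l]_xy}, via the key Lemma~\ref{lem:key}) and then apply $Z$ together with that external input. Your decomposition into Cases~A and~B is a pleasant shortcut past Lemma~\ref{lem:key}, and once unwound it matches the paper's two terms exactly; but the Schur sum formula (or an equivalent result) is still the indispensable ingredient, and you have not replaced it.
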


\subsection{Corollaries of our main theorem}
\begin{cor}\label{cor:sum_zeta_xy(k,a,l)}
 If $r$ and $s$ are nonnegative integers and $w$ is an integer with $w\ge r+s+2$, then
 \[
  \sum_{\substack{\lvert\boldsymbol{k}\rvert+a+\lvert\boldsymbol{l}\rvert=w\\\dep\boldsymbol{k}=r,\dep\boldsymbol{l}=s\\a\ge2}}\zeta_{x,y}(\boldsymbol{k},a,\boldsymbol{l}),
  \sum_{\substack{\lvert\boldsymbol{k}\rvert+a+\lvert\boldsymbol{l}\rvert=w\\\dep\boldsymbol{k}=r,\dep\boldsymbol{l}=s\\a\ge2}}\zeta_{x,y}^{\star}(\boldsymbol{k},a,\boldsymbol{l})
  \in\mathbb{Q}[T,\zeta(2),\dots,\zeta(w)][x,y].
 \]
\end{cor}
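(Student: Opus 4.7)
The plan is to deduce the corollary directly from Theorem~\ref{thm:gen_func_zeta_xy(k,a,l)_intro} by comparing the coefficient of $A^r B^s W^w$ on each side. On the left-hand side of the main theorem this coefficient equals, by construction, the sum appearing in the corollary, so it suffices to show that the corresponding coefficient of the right-hand side lies in $\mathbb{Q}[T,\zeta(2),\dots,\zeta(w)][x,y]$.

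To do this I would first analyze the constituent factors individually. For any $c\in\mathbb{Q}[x,y,A,B]$, the coefficient of $W^n$ in $\psi_1(cW)=\sum_{k\ge2}\zeta(k)c^{k-1}W^{k-1}$ clearly lies in $\mathbb{Q}[\zeta(n+1)][x,y,A,B]$. Likewise, rewriting $\Gamma_1(cW)/\Gamma_1(c'W)=\exp\bigl(\sum_{k\ge1}(\zeta(k)/k)(c^k-c'^k)W^k\bigr)$ and expanding the exponential via the multinomial theorem shows that its $W^n$-coefficient is a $\mathbb{Q}$-polynomial in $T=\zeta(1),\zeta(2),\dots,\zeta(n)$ with coefficients in $\mathbb{Q}[x,y,A,B]$. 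Finally, the prefactors $1/(1\pm A)$ and $1/(1\pm B)$ expand as geometric series with rational coefficients.

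Multiplying these ingredients according to the right-hand side of the main theorem and collecting the $W^w$-coefficient yields an element of $\mathbb{Q}[T,\zeta(2),\dots,\zeta(w)][x,y][[A,B]]$; the upper bound $\zeta(w)$ on the zeta weights that appear follows because in every building block the argument of each $\zeta$ is matched by a corresponding total power of $W$. A further key observation is that every occurrence of $x$ or $y$ on the right-hand side is attached to a factor of $W$, so this $W^w$-coefficient is automatically a polynomial in $x$ and $y$, not merely a power series. Extracting the coefficient of $A^r B^s$ then lands in $\mathbb{Q}[T,\zeta(2),\dots,\zeta(w)][x,y]$, as claimed.

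The only step requiring any care is the weight bookkeeping inside the $\Gamma_1$ factors: one must verify that each monomial $\zeta(k_1)\dotsm\zeta(k_m)$ appearing in the $W^n$-coefficient of $\Gamma_1(cW)$ satisfies $k_1+\dots+k_m=n$, so that no $\zeta(k)$ with $k>w$ can creep into the $W^w$-term. This is immediate from the multinomial expansion of the exponential, and everything else reduces to purely formal power-series manipulation.
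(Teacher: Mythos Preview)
Your proposal is correct and follows essentially the same approach as the paper: both argue by reading off the coefficient of $A^rB^sW^w$ from the right-hand side of Theorem~\ref{thm:gen_func_zeta_xy(k,a,l)_intro} and observing that the low-order $W$-coefficients of $\psi_1$, $\Gamma_1$, and $\Gamma_1^{-1}$ lie in $\mathbb{Q}[T,\zeta(2),\dots,\zeta(w)]$. You spell out more of the bookkeeping (the geometric series in $A,B$, the weight-tracking for $\zeta$'s, and the reason the result is polynomial in $x,y$), but the underlying idea is identical to the paper's one-line proof.
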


\begin{proof}
 The corollary follows from Theorem~\ref{thm:gen_func_zeta_xy(k,a,l)_intro} and the observation that the coefficients of $1,W,\dots,W^{w-1}$ in $\psi_1(W)$ and those of $1,W,\dots,W^w$ in $\Gamma_1(W)$ and $\Gamma_1(W)^{-1}$ belong to $\mathbb{Q}[T,\zeta(2),\dots,\zeta(w)][x,y]$.
\end{proof}

\begin{cor}\label{cor:gen_func_zeta(k,a,l)}
 We have
 \begin{align*}
  &\sum_{\substack{\boldsymbol{k},\boldsymbol{l}\\a\ge2}}\zeta(\boldsymbol{k},a,\boldsymbol{l})A^{\dep\boldsymbol{k}}B^{\dep\boldsymbol{l}}W^{\lvert\boldsymbol{k}\rvert+a+\lvert\boldsymbol{l}\rvert}
  =\frac{W}{1-A}(\psi_1((1-B)W)-\psi_1((A-B)W))\frac{\Gamma_1(W)}{\Gamma_1((1-B)W)},\\
  &\sum_{\substack{\boldsymbol{k},\boldsymbol{l}\\a\ge2}}\zeta^{\star}(\boldsymbol{k},a,\boldsymbol{l})A^{\dep\boldsymbol{k}}B^{\dep\boldsymbol{l}}W^{\lvert\boldsymbol{k}\rvert+a+\lvert\boldsymbol{l}\rvert}
  =\frac{W}{1+B}(\psi_1((1+A)W)-\psi_1((A-B)W))\frac{\Gamma_1((1+B)W)}{\Gamma_1(W)}
 \end{align*}
 in $\mathcal{Z}[T][A,B][[W]]$.
\end{cor}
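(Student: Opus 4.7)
The plan is to deduce both identities by specializing Theorem~\ref{thm:gen_func_zeta_xy(k,a,l)_intro} at $(x,y)=(1,0)$. On the left-hand side, the definitional formula for $\zeta_{x,y}(\boldsymbol{k})$ has the factor $y^{k_{i+1}+\dots+k_r}$ in its $i$-th summand, so at $y=0$ only the term with $i=r$ (i.e.\ with empty trailing block) survives, and it equals $\zeta(\boldsymbol{k})$; the same reasoning gives $\zeta_{1,0}^{\star}(\boldsymbol{k})=\zeta^{\star}(\boldsymbol{k})$. Hence, under this specialization, the left-hand sides of the main theorem coincide coefficient-by-coefficient with the left-hand sides of the corollary.

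For the right-hand sides I would exploit two elementary observations: $\psi_1(0)=0$ (since the series defining $\psi_1$ starts at $k=2$) and $\Gamma_1(0)=\exp(0)=1$. Substituting $x=1$ and $y=0$ into the first formula of the main theorem, the initial summand is wiped out by its $yW$ prefactor, while in the second summand each occurrence of $\Gamma_1(yW)$ and $\Gamma_1(y(1-B)W)$ collapses to $1$, leaving exactly
\[
 \frac{W}{1-A}\bigl(\psi_1((1-B)W)-\psi_1((A-B)W)\bigr)\frac{\Gamma_1(W)}{\Gamma_1((1-B)W)},
\]
which is the first asserted identity. The star formula is treated identically: the first summand is killed by $yW$, and in the second summand $\Gamma_1(y(1+B)W)/\Gamma_1(yW)$ collapses to $1$, yielding the second asserted identity.

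I do not foresee any essential obstacle here; the whole derivation is a single substitution in the polynomial ring $\mathcal{Z}[T][x,y]$ followed by the trivial simplifications $\psi_1(0)=0$ and $\Gamma_1(0)=1$. The only point worth pausing on is that the evaluation map $\mathcal{Z}[T][x,y][A,B][[W]]\to\mathcal{Z}[T][A,B][[W]]$ is a ring homomorphism acting coefficient-by-coefficient on $W$, so the equality of generating functions does indeed pass through the specialization unchanged.
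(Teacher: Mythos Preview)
Your proof is correct and follows exactly the paper's approach: the paper's proof reads, in its entirety, ``Set $x=1$ and $y=0$ in Theorem~\ref{thm:gen_func_zeta_xy(k,a,l)_intro}, and observe that $\Gamma_1(0)=1$.'' You have simply spelled out the routine simplifications (the $yW$ prefactor killing the first summand, and $\Gamma_1(0)=1$ collapsing the gamma ratios) that the paper leaves implicit.
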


\begin{proof}
 Set $x=1$ and $y=0$ in Theorem~\ref{thm:gen_func_zeta_xy(k,a,l)_intro}, and observe that $\Gamma_1(0)=1$.
\end{proof}

\begin{rem}
 Corollary~\ref{cor:gen_func_zeta(k,a,l)} is a generalization of Proposition~\ref{prop:gen_func_zeta(k,a)} (or equivalently of Theorem~\ref{thm:sum_formula_MZV});
 indeed, setting $B=0$ in Corollary~\ref{cor:gen_func_zeta(k,a,l)} gives Proposition~\ref{prop:gen_func_zeta(k,a)}.
\end{rem}

\begin{cor}
 If $r$ and $s$ are nonnegative integers and $w$ is an integer with $w\ge r+s+2$, then
 \[
  \sum_{\substack{\lvert\boldsymbol{k}\rvert+a+\lvert\boldsymbol{l}\rvert=w\\\dep\boldsymbol{k}=r,\dep\boldsymbol{l}=s\\a\ge2}}\zeta(\boldsymbol{k},a,\boldsymbol{l}),
  \sum_{\substack{\lvert\boldsymbol{k}\rvert+a+\lvert\boldsymbol{l}\rvert=w\\\dep\boldsymbol{k}=r,\dep\boldsymbol{l}=s\\a\ge2}}\zeta^{\star}(\boldsymbol{k},a,\boldsymbol{l})
  \in\mathbb{Q}[T,\zeta(2),\dots,\zeta(w)].
 \]
\end{cor}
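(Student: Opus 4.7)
The plan is to apply Corollary~\ref{cor:gen_func_zeta(k,a,l)} and extract the coefficient of $A^r B^s W^w$ from each of the two identities, exactly mirroring the way Corollary~\ref{cor:sum_zeta_xy(k,a,l)} was deduced from Theorem~\ref{thm:gen_func_zeta_xy(k,a,l)_intro}. On each left-hand side, the coefficient of $A^r B^s W^w$ is precisely the sum in question, since the constraints $\dep\boldsymbol{k}=r$, $\dep\boldsymbol{l}=s$, $\lvert\boldsymbol{k}\rvert+a+\lvert\boldsymbol{l}\rvert=w$, and $a\ge2$ correspond one-to-one to the summation indices; the hypothesis $w\ge r+s+2$ guarantees the summation is meaningful.

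It then remains to show that the coefficient of $A^r B^s W^w$ in each right-hand side lies in $\mathbb{Q}[T,\zeta(2),\dots,\zeta(w)]$. Two observations suffice. First, by definition the coefficient of $W^{k-1}$ in $\psi_1(W)$ is $\zeta(k)$ for $k\ge 2$, so the truncation of $\psi_1$ modulo $W^w$ has coefficients in $\mathbb{Q}[\zeta(2),\dots,\zeta(w)]$. Second, expanding $\Gamma_1(W)=\exp\bigl(\sum_{k\ge 1}\zeta(k)W^k/k\bigr)$ as a power series in $W$, and similarly $\Gamma_1(W)^{-1}=\exp\bigl(-\sum_{k\ge 1}\zeta(k)W^k/k\bigr)$, shows that the coefficient of $W^k$ is a $\mathbb{Q}$-polynomial in $\zeta(1)=T,\zeta(2),\dots,\zeta(k)$; in particular, for $k\le w$ it lies in $\mathbb{Q}[T,\zeta(2),\dots,\zeta(w)]$.

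Each term on the right-hand side of Corollary~\ref{cor:gen_func_zeta(k,a,l)} factors as a product of three pieces: a rational function in $A$ or $B$ of the form $W/(1-A)$ or $W/(1+B)$, which expands to a power series with $\mathbb{Q}$-coefficients in $A$ or $B$; a difference $\psi_1(\cdot)-\psi_1(\cdot)$ evaluated at linear forms in $A,B,W$ over $\mathbb{Q}$; and a ratio of $\Gamma_1$-values such as $\Gamma_1(W)/\Gamma_1((1-B)W)$. By the two observations above, after truncating modulo $W^{w+1}$ each factor has coefficients in $\mathbb{Q}[T,\zeta(2),\dots,\zeta(w)][A,B]$. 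Multiplying and extracting the coefficient of $A^r B^s W^w$ therefore yields an element of $\mathbb{Q}[T,\zeta(2),\dots,\zeta(w)]$, as required.

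There is no real obstacle here: the argument is a routine coefficient-extraction exercise once Corollary~\ref{cor:gen_func_zeta(k,a,l)} is in hand, and the only thing worth emphasizing is that the $T$ in the target ring appears solely through $\zeta(1)=T$ inside the expansion of $\Gamma_1$; this was absent in Corollary~\ref{cor:sum_zeta_xy(k,a,l)}'s proof only inasmuch as that statement already carried an $\mathcal{Z}[T]$-coefficient ring on its right-hand side.
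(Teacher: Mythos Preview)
Your proof is correct and follows exactly one of the two routes the paper itself indicates: the paper's proof reads ``Immediate from Corollary~\ref{cor:sum_zeta_xy(k,a,l)} (or Corollary~\ref{cor:gen_func_zeta(k,a,l)})'', and you have spelled out the parenthetical route via Corollary~\ref{cor:gen_func_zeta(k,a,l)}. The shorter route, which you do not take, is simply to specialize $x=1$, $y=0$ in Corollary~\ref{cor:sum_zeta_xy(k,a,l)}, since $\zeta_{1,0}=\zeta$ and $\zeta_{1,0}^{\star}=\zeta^{\star}$.

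One small point of care: in $\mathcal{Z}[T][A,B][[W]]$ the element $1-A$ is not invertible, so strictly speaking $W/(1-A)$ does not ``expand as a power series in $A$'' inside this ring. The correct reading is that the difference $\psi_1((1-B)W)-\psi_1((A-B)W)=\sum_{k\ge 2}\zeta(k)\bigl((1-B)^{k-1}-(A-B)^{k-1}\bigr)W^{k-1}$ is coefficientwise divisible by $1-A$ in $\mathbb{Q}[A,B]$, so the quotient already lies in $\mathcal{Z}[A,B][[W]]$. This does not affect your conclusion.
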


\begin{proof}
 Immediate from Corollary~\ref{cor:sum_zeta_xy(k,a,l)} (or Corollary~\ref{cor:gen_func_zeta(k,a,l)}).
\end{proof}

\begin{cor}\label{cor:gen_func_zeta_S(k,a,l)}
 We have
 \begin{align*}
  &\sum_{\substack{\boldsymbol{k},\boldsymbol{l}\\a\ge2}}\zeta_S(\boldsymbol{k},a,\boldsymbol{l})A^{\dep\boldsymbol{k}}B^{\dep\boldsymbol{l}}W^{\lvert\boldsymbol{k}\rvert+a+\lvert\boldsymbol{l}\rvert}\\
  &=-\frac{W}{1-B}(\psi_1(-(1-A)W)-\psi_1((A-B)W))\frac{\pi W}{\sin\pi W}\cdot\frac{\sin\pi(1-A)W}{\pi(1-A)W}\\
  &\hphantom{{}={}}+\frac{W}{1-A}(\psi_1((1-B)W)-\psi_1((A-B)W))\frac{\pi W}{\sin\pi W}\cdot\frac{\sin\pi(1-B)W}{\pi(1-B)W},\\
  &\sum_{\substack{\boldsymbol{k},\boldsymbol{l}\\a\ge2}}\zeta_S^{\star}(\boldsymbol{k},a,\boldsymbol{l})A^{\dep\boldsymbol{k}}B^{\dep\boldsymbol{l}}W^{\lvert\boldsymbol{k}\rvert+a+\lvert\boldsymbol{l}\rvert}\\
  &=-\frac{W}{1+A}(\psi_1(-(1+B)W)-\psi_1((A-B)W))\frac{\sin\pi W}{\pi W}\cdot\frac{\pi(1+A)W}{\sin\pi(1+A)W}\\
  &\hphantom{{}={}}+\frac{W}{1+B}(\psi_1((1+A)W)-\psi_1((A-B)W))\frac{\sin\pi W}{\pi W}\cdot\frac{\pi(1+B)W}{\sin\pi(1+B)W}
 \end{align*}
 in $\mathcal{Z}[A,B][[W]]$.
\end{cor}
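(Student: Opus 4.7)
The plan is to derive Corollary~\ref{cor:gen_func_zeta_S(k,a,l)} from Theorem~\ref{thm:gen_func_zeta_xy(k,a,l)_intro} by specializing at $x=1$, $y=-1$, using the identifications $\zeta_{1,-1}(\boldsymbol{k})=\zeta_S(\boldsymbol{k})$ and $\zeta_{1,-1}^\star(\boldsymbol{k})=\zeta_S^\star(\boldsymbol{k})$ recorded in the introduction. Under this substitution, each quotient of $\Gamma_1$-factors in the main theorem collapses to the shape $\Gamma_1(\alpha W)\Gamma_1(-\alpha W)/(\Gamma_1(\beta W)\Gamma_1(-\beta W))$ for suitable $\alpha,\beta\in\{1,1\pm A,1\pm B\}$, while the $\psi_1$-factors transform mechanically (the choice $y=-1$ just flips the sign of the corresponding argument, and $-(B-A)=A-B$ matches the form written in the corollary).

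The key auxiliary step is the reflection identity
\[
 \Gamma_1(W)\,\Gamma_1(-W)=\frac{\pi W}{\sin\pi W}
\]
in $\mathcal{Z}[[W]]$, where the right-hand side is interpreted formally as $\exp\bigl(\sum_{n\ge1}\zeta(2n)W^{2n}/n\bigr)$, the expansion obtained from the logarithm of Euler's product $\sin\pi z=\pi z\prod_{n\ge1}(1-z^2/n^2)$. I would verify the identity by passing to exponential form on the left: the $\zeta(1)=T$ term and every other odd-$k$ contribution in the exponent cancel between $\Gamma_1(W)$ and $\Gamma_1(-W)$, leaving exactly $\sum_{n\ge1}\zeta(2n)W^{2n}/n$. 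In particular the $T$-dependence disappears, consistent with the corollary being stated in $\mathcal{Z}[A,B][[W]]$ rather than $\mathcal{Z}[T][A,B][[W]]$.

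Substituting $W\mapsto\alpha W$ in the reflection identity gives $\Gamma_1(\alpha W)\Gamma_1(-\alpha W)=\pi\alpha W/\sin\pi\alpha W$, so each quotient simplifies as
\[
 \frac{\Gamma_1(W)\Gamma_1(-W)}{\Gamma_1(\alpha W)\Gamma_1(-\alpha W)}=\frac{\pi W}{\sin\pi W}\cdot\frac{\sin\pi\alpha W}{\pi\alpha W},
\]
with the reciprocal expression handling the $\zeta_S^\star$ case. Plugging in $\alpha\in\{1-A,1-B\}$ for the $\zeta_S$ formula and $\alpha\in\{1+A,1+B\}$ for the $\zeta_S^\star$ formula then produces exactly the two expressions asserted in the corollary.

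\emph{Main obstacle.} There is no deep obstacle: once the reflection identity is in hand the remainder of the argument is pure bookkeeping, and the reflection identity itself is a routine formal-power-series calculation. The only delicate point is to commit to the consistent interpretation of $\pi^{2n}$ as an element of $\mathcal{Z}$ via Euler's evaluation $\zeta(2n)\in\mathbb{Q}\pi^{2n}$, and to confirm that the $T$-dependence of $\Gamma_1$ cancels completely on the left-hand side of the reflection identity.
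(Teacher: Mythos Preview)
Your proposal is correct and follows essentially the same approach as the paper: specialize Theorem~\ref{thm:gen_func_zeta_xy(k,a,l)_intro} at $x=1$, $y=-1$, and then simplify each $\Gamma_1(\alpha W)\Gamma_1(-\alpha W)$ via the reflection identity $\Gamma_1(W)\Gamma_1(-W)=\pi W/\sin\pi W$, which the paper proves (Lemma~\ref{lem:Gamma_1(W)Gamma_1(-W)}) by exactly the logarithmic cancellation argument you describe.
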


\begin{proof}
 Set $x=1$ and $y=-1$ in Theorem~\ref{thm:gen_func_zeta_xy(k,a,l)_intro}, and use the identity $\Gamma_1(W)\Gamma_1(-W)=\pi W/\sin\pi W$, whose proof will be given as Lemma~\ref{lem:Gamma_1(W)Gamma_1(-W)} in Subsection~\ref{subsec:proof_props}.
\end{proof}

\begin{cor}
 If $r$ and $s$ are nonnegative integers and $w$ is an integer with $w\ge r+s+2$, then
 \[
  \sum_{\substack{\lvert\boldsymbol{k}\rvert+a+\lvert\boldsymbol{l}\rvert=w\\\dep\boldsymbol{k}=r,\dep\boldsymbol{l}=s\\a\ge2}}\zeta_S(\boldsymbol{k},a,\boldsymbol{l}),
  \sum_{\substack{\lvert\boldsymbol{k}\rvert+a+\lvert\boldsymbol{l}\rvert=w\\\dep\boldsymbol{k}=r,\dep\boldsymbol{l}=s\\a\ge2}}\zeta_S^{\star}(\boldsymbol{k},a,\boldsymbol{l})
  \in\mathbb{Q}[\zeta(2),\dots,\zeta(w)].
 \]
\end{cor}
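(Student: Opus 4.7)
The plan is to read off the coefficient of $A^{r}B^{s}W^{w}$ on both sides of the two identities in Corollary~\ref{cor:gen_func_zeta_S(k,a,l)}. The left-hand side is, by construction, exactly the sum whose membership in $\mathbb{Q}[\zeta(2),\dots,\zeta(w)]$ we want to prove, so the whole task reduces to controlling the coefficients that arise on the right-hand side.

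First I would analyze the building blocks. By definition, $\psi_{1}(cW)=\sum_{k\ge 2}\zeta(k)c^{k-1}W^{k-1}$, so for any $c\in\mathbb{Q}[A,B]$ the coefficient of $A^{i}B^{j}W^{n}$ in $\psi_{1}(cW)$ lies in $\mathbb{Q}\cdot\zeta(n+1)$. For the transcendental factors, the classical expansions of $\pi W/\sin\pi W$ and $\sin\pi W/(\pi W)$ have Taylor coefficients in $\mathbb{Q}[\pi^{2}]$, and $\pi^{2}=6\zeta(2)$, so $\mathbb{Q}[\pi^{2}]=\mathbb{Q}[\zeta(2)]$. After the substitutions $W\mapsto(1-A)W,(1-B)W,(1+A)W,(1+B)W$, the factors $\frac{\pi W}{\sin\pi W}\cdot\frac{\sin\pi(1-A)W}{\pi(1-A)W}$ and their $B$ and starred analogues become power series in $W$ whose coefficients are polynomials in $A,B$ with coefficients in $\mathbb{Q}[\zeta(2)]$.

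Next I would track weights. Giving $W$ weight $1$, $A$ and $B$ weight $0$, and $\zeta(k)$ weight $k$, every factor on the right-hand side of Corollary~\ref{cor:gen_func_zeta_S(k,a,l)} is weight-homogeneous of degree $0$, so the coefficient of $A^{r}B^{s}W^{w}$ has total weight $w$. Combined with the previous paragraph, this coefficient is a $\mathbb{Q}$-polynomial in $\zeta(2),\zeta(3),\dots,\zeta(w)$, as claimed. The starred case is handled identically from the second identity in Corollary~\ref{cor:gen_func_zeta_S(k,a,l)}.

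The argument is essentially bookkeeping once Corollary~\ref{cor:gen_func_zeta_S(k,a,l)} is in hand, so there is no serious obstacle. The one potential worry is that $\zeta(1)=T$ might enter through the $\Gamma_{1}$-factors of the main theorem; however, this is already averted by the specialization $(x,y)=(1,-1)$ used to derive Corollary~\ref{cor:gen_func_zeta_S(k,a,l)}, whose ambient ring is $\mathcal{Z}[A,B][[W]]$ rather than $\mathcal{Z}[T][A,B][[W]]$. The condition $w\ge r+s+2$ merely guarantees that the sum is non-vacuous (since $a\ge 2$ and $k_{i},l_{j}\ge 1$ already forces $w\ge r+s+2$).
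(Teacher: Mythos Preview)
Your proposal is correct and follows exactly the paper's approach: the paper's proof is the single sentence ``Immediate from Corollary~\ref{cor:gen_func_zeta_S(k,a,l)},'' and you have simply made explicit the coefficient extraction and weight bookkeeping that this sentence leaves implicit. Your analysis of the $\psi_1$ and sine-type factors, and your remark that the specialization to $(x,y)=(1,-1)$ already eliminates any $T$-dependence, are precisely the observations needed to justify that one line.
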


\begin{proof}
 Immediate from Corollary~\ref{cor:gen_func_zeta_S(k,a,l)}.
\end{proof}

\subsection{Proofs of propositions and an identity stated in this section}\label{subsec:proof_props}
\begin{proof}[Proof of Proposition~\ref{prop:gen_func_zeta(k,a)}]
 Theorem~\ref{thm:sum_formula_MZV} shows that
 \begin{align*}
  \sum_{\substack{\boldsymbol{k}\\a\ge2}}\zeta(\boldsymbol{k},a)A^{\dep\boldsymbol{k}}W^{\lvert\boldsymbol{k}\rvert+a}
  &=\sum_{\substack{r\ge0\\w\ge r+2}}\sum_{\substack{\lvert\boldsymbol{k}\rvert+a=w\\\dep\boldsymbol{k}=r\\a\ge2}}\zeta(\boldsymbol{k},a)A^rW^w
   =\sum_{\substack{r\ge0\\w\ge r+2}}\zeta(w)A^rW^w\\
  &=\sum_{w=2}^{\infty}\sum_{r=0}^{w-2}A^r\zeta(w)W^w
   =\sum_{w=2}^{\infty}\frac{1-A^{w-1}}{1-A}\zeta(w)W^w\\
  &=\frac{W}{1-A}(\psi_1(W)-\psi_1(AW))
 \end{align*}
 and
 \begin{align*}
  \sum_{\substack{\boldsymbol{k}\\a\ge2}}\zeta^{\star}(\boldsymbol{k},a)A^{\dep\boldsymbol{k}}W^{\lvert\boldsymbol{k}\rvert+a}
  &=\sum_{\substack{r\ge0\\w\ge r+2}}\sum_{\substack{\lvert\boldsymbol{k}\rvert+a=w\\\dep\boldsymbol{k}=r\\a\ge2}}\zeta^{\star}(\boldsymbol{k},a)A^rW^w\\
  &=\sum_{\substack{r\ge0\\w\ge r+2}}\binom{w-1}{r}\zeta(w)A^rW^w\\
  &=\sum_{w=2}^{\infty}\sum_{r=0}^{w-2}\binom{w-1}{r}A^r\zeta(w)W^w\\
  &=\sum_{w=2}^{\infty}((1+A)^{w-1}-A^{w-1})\zeta(w)W^w\\
  &=W(\psi_1((1+A)W)-\psi_1(AW)),
 \end{align*}
 as required.
\end{proof}

\begin{proof}[Proof of Proposition~\ref{prop:gen_func_zeta_fin(k,a,l)}]
 Theorem~\ref{thm:sum_formula_SMZV} shows that
 \begin{align*}
  &\sum_{\substack{\boldsymbol{k},\boldsymbol{l}\\a\ge2}}\zeta_S(\boldsymbol{k},a,\boldsymbol{l})A^{\dep\boldsymbol{k}}B^{\dep\boldsymbol{l}}W^{\lvert\boldsymbol{k}\rvert+a+\lvert\boldsymbol{l}\rvert}\\
  &=\sum_{\substack{r,s\ge0\\w\ge r+s+2}}\sum_{\substack{\lvert\boldsymbol{k}\rvert+a+\lvert\boldsymbol{l}\rvert=w\\\dep\boldsymbol{k}=r,\dep\boldsymbol{l}=s\\a\ge2}}\zeta_S(\boldsymbol{k},a,\boldsymbol{l})A^rB^sW^w\\
  &\equiv\sum_{\substack{r,s\ge0\\w\ge r+s+2}}\biggl(-(-1)^r\binom{w-1}{r}+(-1)^s\binom{w-1}{s}\biggr)\zeta(w)A^rB^sW^w\\
  &=\sum_{w=2}^{\infty}\Biggl(-\sum_{r=0}^{w-2}\sum_{s=0}^{w-r-2}\binom{w-1}{r}(-A)^rB^s+\sum_{s=0}^{w-2}\sum_{r=0}^{w-s-2}\binom{w-1}{s}A^r(-B)^s\Biggr)\zeta(w)W^w\\
  &=\sum_{w=2}^{\infty}\Biggl(-\sum_{r=0}^{w-2}\binom{w-1}{r}(-A)^r\frac{1-B^{w-r-1}}{1-B}+\sum_{s=0}^{w-2}\binom{w-1}{s}\frac{1-A^{w-s-1}}{1-A}(-B)^s\Biggr)\zeta(w)W^w\\
  &=\sum_{w=2}^{\infty}\Biggl(-\frac{(1-A)^{w-1}-(B-A)^{w-1}}{1-B}+\frac{(1-B)^{w-1}-(A-B)^{w-1}}{1-A}\Biggr)\zeta(w)W^w\\
  &=-\frac{W}{1-B}(\psi_1((1-A)W)-\psi_1((B-A)W))+\frac{W}{1-A}(\psi_1((1-B)W)-\psi_1((A-B)W))
 \end{align*}
 and
 \begin{align*}
  &\sum_{\substack{\boldsymbol{k},\boldsymbol{l}\\a\ge2}}\zeta_S^{\star}(\boldsymbol{k},a,\boldsymbol{l})A^{\dep\boldsymbol{k}}B^{\dep\boldsymbol{l}}W^{\lvert\boldsymbol{k}\rvert+a+\lvert\boldsymbol{l}\rvert}\\
  &=\sum_{\substack{r,s\ge0\\w\ge r+s+2}}\sum_{\substack{\lvert\boldsymbol{k}\rvert+a+\lvert\boldsymbol{l}\rvert=w\\\dep\boldsymbol{k}=r,\dep\boldsymbol{l}=s\\a\ge2}}\zeta_S^{\star}(\boldsymbol{k},a,\boldsymbol{l})A^rB^sW^w\\
  &\equiv\sum_{\substack{r,s\ge0\\w\ge r+s+2}}\biggl((-1)^s\binom{w-1}{r}-(-1)^r\binom{w-1}{s}\biggr)\zeta(w)A^rB^sW^w\\
  &=\sum_{w=2}^{\infty}\Biggl(\sum_{r=0}^{w-2}\sum_{s=0}^{w-r-2}\binom{w-1}{r}A^r(-B)^s-\sum_{s=0}^{w-2}\sum_{r=0}^{w-s-2}\binom{w-1}{s}(-A)^rB^s\Biggr)\zeta(w)W^w\\
  &=\sum_{w=2}^{\infty}\Biggl(\sum_{r=0}^{w-2}\binom{w-1}{r}A^r\frac{1-(-B)^{w-r-1}}{1+B}-\sum_{s=0}^{w-2}\binom{w-1}{s}\frac{1-(-A)^{w-s-1}}{1+A}B^s\Biggr)\zeta(w)W^w\\
  &=\sum_{w=2}^{\infty}\Biggl(\frac{(1+A)^{w-1}-(A-B)^{w-1}}{1+B}-\frac{(1+B)^{w-1}-(B-A)^{w-1}}{1+A}\Biggr)\zeta(w)W^w\\
  &=\frac{W}{1+B}(\psi_1((1+A)W)-\psi_1((A-B)W))-\frac{W}{1+A}(\psi_1((1+B)W)-\psi_1((B-A)W)),
 \end{align*}
 as required.
\end{proof}

\begin{lem}\label{lem:Gamma_1(W)Gamma_1(-W)}
 We have
 \[
  \Gamma_1(W)\Gamma_1(-W)=\frac{\pi W}{\sin\pi W}
 \]
 in $\mathcal{Z}[[W]]$.
\end{lem}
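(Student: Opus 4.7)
The plan is to compute both sides as exponentials of explicit power series and match them term by term. Since $\Gamma_1$ is defined as an exponential, it is natural to start from
\[
 \Gamma_1(W)\Gamma_1(-W)=\exp\Biggl(\sum_{k=1}^{\infty}\frac{\zeta(k)}{k}\bigl(W^k+(-W)^k\bigr)\Biggr).
\]
The odd-$k$ terms cancel identically; in particular the $k=1$ contribution, which is the only term involving $T$, vanishes, and this is what confirms that the product belongs to $\mathcal{Z}[[W]]$ rather than merely to $\mathcal{Z}[T][[W]]$. What remains is
\[
 \Gamma_1(W)\Gamma_1(-W)=\exp\Biggl(\sum_{k=1}^{\infty}\frac{\zeta(2k)}{k}W^{2k}\Biggr).
\]

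For the right-hand side I would invoke the Euler product expansion
\[
 \frac{\sin\pi W}{\pi W}=\prod_{n=1}^{\infty}\Biggl(1-\frac{W^2}{n^2}\Biggr),
\]
which is a bona fide identity in $\mathbb{Q}[[W]]$. Taking the logarithm and expanding $\log(1-W^2/n^2)=-\sum_{k\ge1}W^{2k}/(kn^{2k})$, then swapping the order of summation (legitimate formally in $W$ because for each fixed power $W^{2k}$ only finitely many $n$ contribute to any given precision), yields
\[
 \log\frac{\sin\pi W}{\pi W}=-\sum_{k=1}^{\infty}\frac{\zeta(2k)}{k}W^{2k},
\]
and exponentiating gives $\pi W/\sin\pi W=\exp\bigl(\sum_{k\ge1}\zeta(2k)W^{2k}/k\bigr)$, which matches the expression obtained above for $\Gamma_1(W)\Gamma_1(-W)$.

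There is essentially no obstacle here: the only points to handle carefully are the formal-series justification of the rearrangement in the logarithmic expansion and the observation that the cancellation of odd-index terms removes $\zeta(1)=T$, placing the product in $\mathcal{Z}[[W]]$. The proof is therefore a one-screen calculation, and the subtlety is purely bookkeeping about which coefficient ring one is working in.
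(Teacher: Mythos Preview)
Your argument is correct and is essentially the same as the paper's own proof: both compute $\log(\Gamma_1(W)\Gamma_1(-W))=\sum_{k\ge1}\zeta(2k)W^{2k}/k$ from the definition and match it against $\log(\pi W/\sin\pi W)$ via the Euler product for $\sin\pi W/(\pi W)$. Your additional remarks about the cancellation of the $\zeta(1)=T$ term and the formal justification of the rearrangement are fine but not needed beyond what the paper does.
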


\begin{proof}
 Since
 \[
  \log(\Gamma_1(W)\Gamma_1(-W))=\sum_{k=1}^{\infty}\frac{\zeta(k)}{k}(W^k+(-W)^k)=\sum_{k=1}^{\infty}\frac{\zeta(2k)}{k}W^{2k}
 \]
 and
 \[
  \log\frac{\pi W}{\sin\pi W}
  =\log\prod_{m=1}^{\infty}\biggl(1-\frac{W^2}{m^2}\biggr)^{-1}
  =-\sum_{m=1}^{\infty}\log\biggl(1-\frac{W^2}{m^2}\biggr)
  =\sum_{k,m=1}^{\infty}\frac{W^{2k}}{km^{2k}}
  =\sum_{k=1}^{\infty}\frac{\zeta(2k)}{k}W^{2k},
 \]
 the lemma follows.
\end{proof}

\section{Hopf algebra formed by the indices}
We first recall Hoffman's result (\cite{Hof00}) that the indices form a Hopf algebra.
We associate to each index $\boldsymbol{k}=(k_1,\dots,k_r)$ a formal symbol $[\boldsymbol{k}]=[k_1,\dots,k_r]$,
and write $\mathcal{I}$ for the $\mathbb{Q}$-linear space of all formal $\mathbb{Q}$-linear combinations of the symbols $[\boldsymbol{k}]$
(introducing such formal symbols facilitates distinction, for example, between $2(k+l)\in\mathbb{Z}$ and $2[k+l]\in\mathcal{I}$).

For ease of notation, if $\boldsymbol{k}=(k_1,\dots,k_r)$ is an index,
then we write $\boldsymbol{k}_i=(k_1,\dots,k_i)$ and $\boldsymbol{k}^i=(k_{i+1},\dots,k_r)$ for $i=0,\dots,r$,
where we understand that $\boldsymbol{k}_0=\boldsymbol{k}^r=\emptyset$,
and we write $\overleftarrow{\boldsymbol{k}}=(k_r,\dots,k_1)$.

We now define the linear maps that make $\mathcal{I}$ a Hopf algebra.
The multiplication $\mathcal{I}\otimes\mathcal{I}\to\mathcal{I}$, often written as a bilinear product $*$ on $\mathcal{I}$ (known as the \emph{harmonic product} or the \emph{stuffle product}), is defined inductively by setting
\begin{enumerate}
 \item $[\boldsymbol{k}]*[\emptyset]=[\emptyset]*[\boldsymbol{k}]=[\boldsymbol{k}]$ whenever $\boldsymbol{k}$ is an index, and
 \item $[\boldsymbol{k},k]*[\boldsymbol{l},l]=[[\boldsymbol{k},k]*[\boldsymbol{l}],l]+[[\boldsymbol{k}]*[\boldsymbol{l},l],k]+[[\boldsymbol{k}]*[\boldsymbol{l}],k+l]$ whenever $\boldsymbol{k}$ and $\boldsymbol{l}$ are indices and $k$ and $l$ are positive integers, where on the right-hand side we understand that $[\cdot,l]$, $[\cdot,k]$, and $[\cdot,k+l]$ denote the $\mathbb{Q}$-linear operators of concatenating the specified integers.
\end{enumerate}
The unit $\mathbb{Q}\to\mathcal{I}$ is given by $1\mapsto[\emptyset]$.
The comultiplication $\mathcal{I}\to\mathcal{I}\otimes\mathcal{I}$ is defined by
\[
 [\boldsymbol{k}]\mapsto\sum_{i=0}^{r}[\boldsymbol{k}_i]\otimes[\boldsymbol{k}^i]
\]
for indices $\boldsymbol{k}$ of depth $r$.
The counit $\mathcal{I}\to\mathbb{Q}$ is given by
\[
 [\boldsymbol{k}]\mapsto
 \begin{cases}
  1&\text{if $\boldsymbol{k}=\emptyset$};\\
  0&\text{otherwise}
 \end{cases}
\]
for indices $\boldsymbol{k}$.
The antipode $S\colon\mathcal{I}\to\mathcal{I}$ is given by
\[
 S([\boldsymbol{k}])=(-1)^r[\overleftarrow{\boldsymbol{k}}]^{\star},
\]
for indices $\boldsymbol{k}$ of depth $r$.
Here if $\boldsymbol{l}=(l_1,\dots,l_s)$ is an index, then $[\boldsymbol{l}]^{\star}$ denotes the sum of all $[l_1\mathbin{\square}\dotsm\mathbin{\square}l_s]$ with each square replaced by a plus sign or a comma.

\begin{thm}[Hoffman~\cite{Hof00}]\label{thm:I_Hopf_alg}
 The maps given above make $\mathcal{I}$ a commutative Hopf algebra.
\end{thm}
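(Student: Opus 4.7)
The plan is to verify the Hopf algebra axioms in turn, exploiting the fact that $\mathcal{I}$ is graded by weight with $\mathcal{I}_0=\mathbb{Q}[\emptyset]$ and therefore connected; once a bialgebra structure is established, a unique antipode is automatic, so the remaining task will be only to check that the prescribed formula for $S$ satisfies the antipode axiom.

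I would begin with the algebra structure. Commutativity of $*$ is immediate from the symmetry of the recursive clause, while associativity $([\boldsymbol{k}]*[\boldsymbol{l}])*[\boldsymbol{m}]=[\boldsymbol{k}]*([\boldsymbol{l}]*[\boldsymbol{m}])$ follows by induction on $\dep\boldsymbol{k}+\dep\boldsymbol{l}+\dep\boldsymbol{m}$: after peeling off the last letter of each factor, the recursive rule produces nine terms on either side, and the inductive hypothesis exhibits both expressions as the common combinatorial sum over ``three-way stufflings'' of the underlying entries. The coalgebra axioms are easy: counitality holds because only the extreme term survives after applying the counit, and coassociativity is immediate since both iterated coproducts of $[\boldsymbol{k}]$ reduce to $\sum_{0\le i\le j\le r}[\boldsymbol{k}_i]\otimes[k_{i+1},\dots,k_j]\otimes[\boldsymbol{k}^j]$.

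The heart of the proof is the bialgebra compatibility $\Delta([\boldsymbol{k}]*[\boldsymbol{l}])=\Delta([\boldsymbol{k}])*\Delta([\boldsymbol{l}])$, where the right-hand side uses componentwise stuffle in $\mathcal{I}\otimes\mathcal{I}$. This I would establish by induction on $\dep\boldsymbol{k}+\dep\boldsymbol{l}$. Writing $\boldsymbol{k}=(\boldsymbol{k}',k)$ and $\boldsymbol{l}=(\boldsymbol{l}',l)$, one expands the left-hand side by applying the three-term recursion for $*$ and then the deconcatenation rule for $\Delta$, and regroups the result according to whether the final slot on the right tensor factor is inherited from $k$, from $l$, or from the merge $k+l$; each of the three groups matches the corresponding contribution in $\Delta([\boldsymbol{k}])*\Delta([\boldsymbol{l}])$ by the inductive hypothesis. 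The combinatorial bookkeeping here is the principal obstacle, though it is entirely mechanical.

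Finally, connectedness yields the antipode by the standard recursion $S([\boldsymbol{k}])=-[\boldsymbol{k}]-\sum_{i=1}^{r-1}S([\boldsymbol{k}_i])*[\boldsymbol{k}^i]$, and it remains to verify that $S([\boldsymbol{k}])=(-1)^r[\overleftarrow{\boldsymbol{k}}]^{\star}$ satisfies this recursion. Unpacking the definition of $[\cdot]^{\star}$ as a sum over the choices of comma versus plus sign in each interior slot, one recognizes the required identity as an inclusion-exclusion that follows directly from the inductive hypothesis; commutativity of $\mathcal{I}$ then upgrades the one-sided antipode identity to the full antipode axiom.
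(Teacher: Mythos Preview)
The paper does not supply its own proof of this statement: Theorem~\ref{thm:I_Hopf_alg} is simply attributed to Hoffman and cited as \cite{Hof00}, with no argument given. So there is nothing in the paper to compare your proposal against.

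That said, your outline is essentially the standard verification one finds in Hoffman's original paper and in the quasi-shuffle literature: prove associativity of $*$ by induction on total depth, check that deconcatenation is coassociative and counital, establish the compatibility $\Delta(u*v)=\Delta(u)*\Delta(v)$ by induction on depth (this is indeed where the real bookkeeping lies), and then invoke connectedness of the weight grading to obtain existence and uniqueness of the antipode before checking the closed formula. Your plan is sound and would go through; the only place I would tighten the exposition is the final step, where ``inclusion--exclusion'' for the antipode identity
\[
\sum_{i=0}^{r}(-1)^{i}[\overleftarrow{\boldsymbol{k}_i}]^{\star}*[\boldsymbol{k}^i]=0\qquad(r\ge1)
\]
deserves an explicit inductive argument rather than a one-line gesture, since the interaction between the star-expansion and the stuffle product is not entirely transparent at first glance.
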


In particular we have the following:
\begin{itemize}
 \item The comultiplication $\mathcal{I}\to\mathcal{I}\otimes\mathcal{I}$ is an algebra homomorphism.
 \item The antipode $S\colon\mathcal{I}\to\mathcal{I}$ is an involution and algebra homomorphism.
  In this paper we find it more convenient to use the $\mathbb{Q}$-linear map $\tilde{S}\colon\mathcal{I}\to\mathcal{I}$ defined by $\tilde{S}([\boldsymbol{k}])=(-1)^r[\boldsymbol{k}]^{\star}$ for indices $\boldsymbol{k}$ of depth $r$;
  it follows that $\tilde{S}$ is also an involution and algebra homomorphism.
 \item If $\boldsymbol{k}$ is an index of depth $r$, then
  \[
   \sum_{i=0}^{r}(-1)^{r-i}[\boldsymbol{k}_i]*[\overleftarrow{\boldsymbol{k}^i}]^{\star}=
   \begin{cases}
    [\emptyset]&\text{if $\boldsymbol{k}=\emptyset$};\\
    0&\text{otherwise}.
   \end{cases}
  \]
\end{itemize}

\section{Generating functions for symmetric sums}
\subsection{Generating functions of $[\boldsymbol{k}]$ and $[\boldsymbol{k}]^{\star}$}
In this subsection, we compute the generating functions
\[
  \sum_{\boldsymbol{k}}[\boldsymbol{k}]A^{\dep\boldsymbol{k}}W^{\lvert\boldsymbol{k}\rvert},\qquad
  \sum_{\boldsymbol{k}}[\boldsymbol{k}]^{\star}A^{\dep\boldsymbol{k}}W^{\lvert\boldsymbol{k}\rvert}
\]
in $\mathcal{I}[A][[W]]$.
To state the results, it is convenient to define the formal power series
\[
 \Gamma_{1,\mathcal{I}}(W)=\exp\Biggl(\sum_{k=1}^{\infty}\frac{[k]}{k}W^k\Biggr)\in\mathcal{I}[[W]].
\]
Observe that
\[
 \tilde{S}(\Gamma_{1,\mathcal{I}}(W))=\exp\Biggl(-\sum_{k=1}^{\infty}\frac{[k]}{k}W^k\Biggr)=\Gamma_{1,\mathcal{I}}(W)^{-1}.
\]

\begin{prop}\label{prop:gen_func_k}
 We have
 \[
  \sum_{\boldsymbol{k}}[\boldsymbol{k}]A^{\dep\boldsymbol{k}}W^{\lvert\boldsymbol{k}\rvert}
  =\frac{\Gamma_{1,\mathcal{I}}(W)}{\Gamma_{1,\mathcal{I}}((1-A)W)},\qquad
  \sum_{\boldsymbol{k}}[\boldsymbol{k}]^{\star}A^{\dep\boldsymbol{k}}W^{\lvert\boldsymbol{k}\rvert}
  =\frac{\Gamma_{1,\mathcal{I}}((1+A)W)}{\Gamma_{1,\mathcal{I}}(W)}
 \]
 in $\mathcal{I}[A][[W]]$.
\end{prop}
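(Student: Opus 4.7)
The plan is to exploit the Hopf algebra structure of $\mathcal{I}$ from Theorem~\ref{thm:I_Hopf_alg}: I would show that both sides of each formula are group-like elements in the completed Hopf algebra $\mathcal{I}[A][[W]]$ and that their primitive logarithms coincide. The star formula will then be reduced to the first via the involution $\tilde{S}$.

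First I would verify group-likeness. Write $F(A,W)$ for the left-hand side of the first formula. Using $\Delta[\boldsymbol{k}]=\sum_i[\boldsymbol{k}_i]\otimes[\boldsymbol{k}^i]$ together with the additivity of depth and weight under the splitting $\boldsymbol{k}=(\boldsymbol{k}_i,\boldsymbol{k}^i)$, a direct calculation gives $\Delta F(A,W)=F(A,W)\otimes F(A,W)$. For the right-hand side $R(A,W):=\Gamma_{1,\mathcal{I}}(W)/\Gamma_{1,\mathcal{I}}((1-A)W)$, each $[k]$ is primitive (the coproduct formula yields no middle terms when the depth is $1$), so $\Gamma_{1,\mathcal{I}}(\lambda W)=\exp(\sum_k\frac{[k]}{k}\lambda^kW^k)$ is group-like for every scalar $\lambda$, and a quotient of group-likes is group-like. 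Furthermore, inspecting the coefficient of $[\boldsymbol{a}]\otimes[\boldsymbol{b}]$ (for non-empty $\boldsymbol{a},\boldsymbol{b}$) in $\Delta$ applied to a general element shows that the primitives of $\mathcal{I}$ are exactly the $\mathbb{Q}$-span of $\{[k]:k\ge1\}$.

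Since $F$ and $R$ have constant term $1$, their logarithms are well-defined, primitive, and hence $\mathbb{Q}[A][[W]]$-linear combinations of the $[k]$ with $k\ge1$. A direct expansion yields $\log R=\sum_{m\ge1}\frac{[m]}{m}\bigl(1-(1-A)^m\bigr)W^m$. To compute $\log F=\sum_{n\ge1}\frac{(-1)^{n-1}}{n}(F-1)^{*n}$, the crucial observation is that $[\boldsymbol{a}]*[\boldsymbol{b}]$ has minimum depth $\max(\dep\boldsymbol{a},\dep\boldsymbol{b})$, so depth-$1$ summands of $(F-1)^{*n}$ arise only from depth-$1$ summands of $F-1$ in every factor; a short induction on the recursive definition of $*$ then shows that the depth-$1$ part of $[k_1]*\dotsm*[k_n]$ equals $[k_1+\dotsb+k_n]$. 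Summing over $(k_1,\dots,k_n)$ with fixed $k_1+\dotsb+k_n=m$ and simplifying with $\binom{m-1}{n-1}/n=\binom{m}{n}/m$ together with the binomial theorem gives $\log F=\sum_{m\ge1}\frac{[m]}{m}\bigl(1-(1-A)^m\bigr)W^m=\log R$, whence $F=R$.

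For the star formula, I would use $[\boldsymbol{k}]^{\star}=(-1)^{\dep\boldsymbol{k}}\tilde{S}([\boldsymbol{k}])$ to rewrite its generating function as $\tilde{S}(F(-A,W))$, and then apply $\tilde{S}$ (an algebra homomorphism) to the first formula with $A$ replaced by $-A$, invoking the stated identity $\tilde{S}(\Gamma_{1,\mathcal{I}}(W))=\Gamma_{1,\mathcal{I}}(W)^{-1}$. I expect the main obstacle to be the combinatorial isolation of the depth-$1$ part of iterated stuffle products: correctly tracking the recursive expansion of $*$ and verifying that higher-depth summands cannot contribute to depth $1$ demands a careful induction.
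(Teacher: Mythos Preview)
Your proof is correct, and the reduction of the star identity to the non-star one via $\tilde{S}$ is exactly what the paper does. However, your treatment of the first identity is genuinely different from the paper's.

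The paper also takes logarithms, but instead of invoking group-likeness it differentiates with respect to $W$, clears the denominator, and is left with the product identity
\[
 \Biggl(\sum_{\boldsymbol{k}}[\boldsymbol{k}]A^{\dep\boldsymbol{k}}W^{\lvert\boldsymbol{k}\rvert}\Biggr)\Biggl(\sum_{k\ge1}[k](1-(1-A)^k)W^k\Biggr)=\sum_{\boldsymbol{k}\ne\emptyset}\lvert\boldsymbol{k}\rvert[\boldsymbol{k}]A^{\dep\boldsymbol{k}}W^{\lvert\boldsymbol{k}\rvert},
\]
which it then verifies by extracting the coefficient of $[\boldsymbol{l}]W^{\lvert\boldsymbol{l}\rvert}$ for an \emph{arbitrary} nonempty $\boldsymbol{l}$. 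This requires knowing exactly which pairs $(\boldsymbol{k},k)$ contribute $[\boldsymbol{l}]$ to the stuffle $[\boldsymbol{k}]*[k]$, and the resulting telescoping sum in $A$ simplifies to $\lvert\boldsymbol{l}\rvert A^{\dep\boldsymbol{l}}$.

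Your route exploits more of the Hopf structure: once $F$ and $R$ are group-like, their logarithms are primitive, and your observation that the primitives of $\mathcal{I}$ are spanned by the $[k]$ means you only have to compute the depth-$1$ projection of $\log F$. That reduces the combinatorics to the single easy fact that the depth-$1$ part of $[k_1]*\dotsm*[k_n]$ is $[k_1+\dotsb+k_n]$, followed by the binomial identity $\binom{m-1}{n-1}/n=\binom{m}{n}/m$. By contrast, the paper's argument is more elementary in that it never needs to characterize the primitives or argue about group-like elements in a completed Hopf algebra, at the cost of a slightly more involved coefficient-by-coefficient check involving the stuffle of a general index against a length-one index.
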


\begin{proof}
 The first identity implies the second because
 \begin{align*}
  \sum_{\boldsymbol{k}}[\boldsymbol{k}]^{\star}A^{\dep\boldsymbol{k}}W^{\lvert\boldsymbol{k}\rvert}
  &=\tilde{S}\Biggl(\sum_{\boldsymbol{k}}[\boldsymbol{k}](-A)^{\dep\boldsymbol{k}}W^{\lvert\boldsymbol{k}\rvert}\Biggr)
  =\tilde{S}\biggl(\frac{\Gamma_{1,\mathcal{I}}(W)}{\Gamma_{1,\mathcal{I}}((1+A)W)}\Biggr)\\
  &=\frac{\Gamma_{1,\mathcal{I}}((1+A)W)}{\Gamma_{1,\mathcal{I}}(W)}.
 \end{align*}
 The first identity is equivalent to
 \[
  \log\Biggl(\sum_{\boldsymbol{k}}[\boldsymbol{k}]A^{\dep\boldsymbol{k}}W^{\lvert\boldsymbol{k}\rvert}\Biggr)
  =\sum_{k=1}^{\infty}\frac{[k]}{k}(1-(1-A)^k)W^k,
 \]
 and since both sides have constant term $0$ (with respect to $W$), it suffices to prove that both sides have the same derivative (with respect to $W$):
 \[
  \frac{\sum_{\boldsymbol{k}\ne\emptyset}\lvert\boldsymbol{k}\rvert[\boldsymbol{k}]A^{\dep\boldsymbol{k}}W^{\lvert\boldsymbol{k}\rvert-1}}{\sum_{\boldsymbol{k}}[\boldsymbol{k}]A^{\dep\boldsymbol{k}}W^{\lvert\boldsymbol{k}\rvert}}
  =\sum_{k=1}^{\infty}[k](1-(1-A)^k)W^{k-1},
 \]
 which in turn is equivalent to
 \[
  \Biggl(\sum_{\boldsymbol{k}}[\boldsymbol{k}]A^{\dep\boldsymbol{k}}W^{\lvert\boldsymbol{k}\rvert}\Biggr)\Biggl(\sum_{k=1}^{\infty}[k](1-(1-A)^k)W^k\Biggr)=\sum_{\boldsymbol{k}\ne\emptyset}\lvert\boldsymbol{k}\rvert[\boldsymbol{k}]A^{\dep\boldsymbol{k}}W^{\lvert\boldsymbol{k}\rvert}.
 \]

 For each nonempty index $\boldsymbol{l}=(l_1,\dots,l_s)$,
 the coefficient of $[\boldsymbol{l}]W^{\lvert\boldsymbol{l}\rvert}$ in the left-hand side is
 \[
  \sum_{j=1}^{s}A^{s-1}(1-(1-A)^{l_j})+\sum_{j=1}^{s}A^s\sum_{i=1}^{l_j-1}(1-(1-A)^i),
 \]
 which simplifies to $A^s\sum_{j=1}^{s}l_j=\lvert\boldsymbol{l}\rvert A^{\dep\boldsymbol{l}}$.
\end{proof}

\begin{rem}\label{rem:Gamma_1I_expansion}
 Substituting $A=1$ and $A=-1$ into the equations in Proposition~\ref{prop:gen_func_k} respectively gives
 \begin{gather*}
  \Gamma_{1,\mathcal{I}}(W)=\sum_{\boldsymbol{k}}[\boldsymbol{k}]W^{\lvert\boldsymbol{k}\rvert}=\sum_{k=0}^{\infty}[\{1\}^k]^{\star}W^k,\\
  \Gamma_{1,\mathcal{I}}(W)^{-1}=\sum_{\boldsymbol{k}}(-1)^{\dep\boldsymbol{k}}[\boldsymbol{k}]^{\star}W^{\lvert\boldsymbol{k}\rvert}=\sum_{k=0}^{\infty}(-1)^k[\{1\}^k]W^k,
 \end{gather*}
 where $\{1\}^k$ denotes the index $(\underbrace{1,\dots,1}_{k})$, which means $\emptyset$ if $k=0$.
\end{rem}

\subsection{Generating functions for symmetric sums of $[\boldsymbol{k}]_{x,y}$ and $[\boldsymbol{k}]_{x,y}^{\star}$}
If $\boldsymbol{k}$ is an index, then we define
\begin{align*}
 [\boldsymbol{k}]_{x,y}&=\sum_{i=0}^{r}[\boldsymbol{k}_i]*[\overleftarrow{\boldsymbol{k}^i}]x^{\lvert\boldsymbol{k}_i\rvert}y^{\lvert\boldsymbol{k}^i\rvert}\in\mathcal{I}[x,y],\\
 [\boldsymbol{k}]_{x,y}^{\star}&=\sum_{i=0}^{r}[\boldsymbol{k}_i]^{\star}*[\overleftarrow{\boldsymbol{k}^i}]^{\star}x^{\lvert\boldsymbol{k}_i\rvert}y^{\lvert\boldsymbol{k}^i\rvert}\in\mathcal{I}[x,y],
\end{align*}
where $r=\dep\boldsymbol{k}$.
Note that if $\boldsymbol{k}$ is an index of depth $r$, then
\begin{align*}
 \tilde{S}([\boldsymbol{k}]_{x,y})
 &=\sum_{i=0}^{r}\tilde{S}([\boldsymbol{k}_i])*\tilde{S}([\overleftarrow{\boldsymbol{k}^i}])x^{\lvert\boldsymbol{k}_i\rvert}y^{\lvert\boldsymbol{k}^i\rvert}\\
 &=\sum_{i=0}^{r}(-1)^i[\boldsymbol{k}_i]^{\star}*(-1)^{r-i}[\overleftarrow{\boldsymbol{k}^i}]^{\star}x^{\lvert\boldsymbol{k}_i\rvert}y^{\lvert\boldsymbol{k}^i\rvert}\\
 &=(-1)^r[\boldsymbol{k}]_{x,y}^{\star}.
\end{align*}

\begin{lem}
 The $\mathbb{Q}$-linear map from $\mathcal{I}$ to $\mathcal{I}[x,y]$ given by
 $[\boldsymbol{k}]\mapsto[\boldsymbol{k}]_{x,y}$ for indices $\boldsymbol{k}$ is an algebra homomorphism.
\end{lem}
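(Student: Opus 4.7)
The plan is to realize the map $\phi\colon[\boldsymbol{k}]\mapsto[\boldsymbol{k}]_{x,y}$ as a composition of algebra homomorphisms built from the bialgebra structure on $\mathcal{I}$ recalled in Theorem~\ref{thm:I_Hopf_alg}. First I would introduce three auxiliary $\mathbb{Q}$-linear maps: the weight gradings $\mu_x,\mu_y\colon\mathcal{I}\to\mathcal{I}[x,y]$ defined by $\mu_x([\boldsymbol{k}])=x^{\lvert\boldsymbol{k}\rvert}[\boldsymbol{k}]$ and $\mu_y([\boldsymbol{k}])=y^{\lvert\boldsymbol{k}\rvert}[\boldsymbol{k}]$, and the reversal $\rho\colon\mathcal{I}\to\mathcal{I}$ defined by $\rho([\boldsymbol{k}])=[\overleftarrow{\boldsymbol{k}}]$. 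That $\mu_x$ and $\mu_y$ are algebra homomorphisms is immediate from the fact that every summand of $[\boldsymbol{k}]*[\boldsymbol{l}]$ has total weight $\lvert\boldsymbol{k}\rvert+\lvert\boldsymbol{l}\rvert$, which follows from the recursive definition of $*$.

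Next I would verify the key compositional identity
\[
 \phi \;=\; m\circ(\mu_x\otimes(\mu_y\circ\rho))\circ\Delta,
\]
where $\Delta\colon\mathcal{I}\to\mathcal{I}\otimes\mathcal{I}$ is the comultiplication and $m\colon\mathcal{I}[x,y]\otimes\mathcal{I}[x,y]\to\mathcal{I}[x,y]$ is the multiplication. Unwinding the definitions, the right-hand side applied to $[\boldsymbol{k}]$ gives $\sum_{i=0}^{r}x^{\lvert\boldsymbol{k}_i\rvert}y^{\lvert\boldsymbol{k}^i\rvert}[\boldsymbol{k}_i]*[\overleftarrow{\boldsymbol{k}^i}]$, which is precisely $[\boldsymbol{k}]_{x,y}$.

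From this identity the conclusion is immediate: $\Delta$ is an algebra homomorphism since $\mathcal{I}$ is a bialgebra (Theorem~\ref{thm:I_Hopf_alg}); the tensor product of two algebra homomorphisms is an algebra homomorphism; and $m$ is an algebra homomorphism because $\mathcal{I}[x,y]$ is commutative (inherited from the commutativity of $\mathcal{I}$ stated in Theorem~\ref{thm:I_Hopf_alg}). Hence $\phi$ is an algebra homomorphism.

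The one step that requires genuine work is showing that $\rho$ is an algebra homomorphism, i.e.\ $\rho([\boldsymbol{k}]*[\boldsymbol{l}])=\rho([\boldsymbol{k}])*\rho([\boldsymbol{l}])$. This is the main obstacle, because the recursive definition of $*$ peels off the last component while $\rho$ sends last to first. I would handle it either by a short induction on $\dep\boldsymbol{k}+\dep\boldsymbol{l}$, deriving a dual ``peel-from-the-front'' recursion and matching it with the ``peel-from-the-back'' recursion after applying $\rho$, or (more conceptually) by invoking the standard non-recursive description of the harmonic product as a sum over quasi-shuffles of $\boldsymbol{k}$ and $\boldsymbol{l}$, a description that is manifestly invariant under simultaneous reversal of all three indices involved.
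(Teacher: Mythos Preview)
Your proposal is correct and takes essentially the same approach as the paper: both factor the map as $m\circ(\mu_x\otimes(\mu_y\circ\rho))\circ\Delta$, using the bialgebra structure of $\mathcal{I}$. The paper is terser and simply presents the composite without pausing to justify that the reversal $\rho\colon[\boldsymbol{k}]\mapsto[\overleftarrow{\boldsymbol{k}}]$ is multiplicative, whereas you rightly flag this as the one step requiring genuine verification.
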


\begin{proof}
 The map in question is the composite
 \begin{align*}
  \mathcal{I}&\to\mathcal{I}\otimes\mathcal{I}\\
  &\to\mathcal{I}[x]\otimes\mathcal{I}[y]\cong\mathcal{I}\otimes\mathbb{Q}[x]\otimes\mathcal{I}\otimes\mathbb{Q}[y]\cong\mathcal{I}\otimes\mathcal{I}\otimes\mathbb{Q}[x,y]\\
  &\to\mathcal{I}\otimes\mathbb{Q}[x,y]\cong\mathcal{I}[x,y],
 \end{align*}
 where the arrows denote the comultiplication, 
 the map $[\boldsymbol{k}]\otimes[\boldsymbol{l}]\mapsto[\boldsymbol{k}]x^{\lvert\boldsymbol{k}\rvert}\otimes[\overleftarrow{\boldsymbol{l}}]y^{\lvert\boldsymbol{l}\rvert}$, and the multiplication.
\end{proof}

\begin{prop}\label{prop:gen_func_kxy}
 We have
 \begin{align*}
  \sum_{\boldsymbol{k}}[\boldsymbol{k}]_{x,y}A^{\dep\boldsymbol{k}}W^{\lvert\boldsymbol{k}\rvert}
  &=\frac{\Gamma_{1,\mathcal{I}}(xW)\Gamma_{1,\mathcal{I}}(yW)}{\Gamma_{1,\mathcal{I}}(x(1-A)W)\Gamma_{1,\mathcal{I}}(y(1-A)W)},\\
  \sum_{\boldsymbol{k}}[\boldsymbol{k}]_{x,y}^{\star}A^{\dep\boldsymbol{k}}W^{\lvert\boldsymbol{k}\rvert}
  &=\frac{\Gamma_{1,\mathcal{I}}(x(1+A)W)\Gamma_{1,\mathcal{I}}(y(1+A)W)}{\Gamma_{1,\mathcal{I}}(xW)\Gamma_{1,\mathcal{I}}(yW)}
 \end{align*}
 in $\mathcal{I}[x,y][A][[W]]$.
\end{prop}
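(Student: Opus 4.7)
The plan is to reduce the proposition to Proposition~\ref{prop:gen_func_k} by exploiting the factorization of $[\boldsymbol{k}]\mapsto[\boldsymbol{k}]_{x,y}$ established in the preceding lemma, namely: comultiply, reweight the two tensor factors by $x^{|\cdot|}$ and $y^{|\cdot|}$ (reversing the second), and then multiply.

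For the first identity, I would rewrite
\[
 \sum_{\boldsymbol{k}}[\boldsymbol{k}]_{x,y}A^{\dep\boldsymbol{k}}W^{\lvert\boldsymbol{k}\rvert}
 =\sum_{\boldsymbol{k}}\sum_{i=0}^{\dep\boldsymbol{k}}[\boldsymbol{k}_i]*[\overleftarrow{\boldsymbol{k}^i}]\,x^{\lvert\boldsymbol{k}_i\rvert}y^{\lvert\boldsymbol{k}^i\rvert}A^{\dep\boldsymbol{k}}W^{\lvert\boldsymbol{k}\rvert}
\]
and reindex by $(\boldsymbol{m},\boldsymbol{n})=(\boldsymbol{k}_i,\boldsymbol{k}^i)$, so that the pair $(\boldsymbol{m},\boldsymbol{n})$ ranges freely over ordered pairs of indices, with $\dep\boldsymbol{k}=\dep\boldsymbol{m}+\dep\boldsymbol{n}$ and $|\boldsymbol{k}|=|\boldsymbol{m}|+|\boldsymbol{n}|$. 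The sum then factors as
\[
 \Biggl(\sum_{\boldsymbol{m}}[\boldsymbol{m}]A^{\dep\boldsymbol{m}}(xW)^{\lvert\boldsymbol{m}\rvert}\Biggr)
 *
 \Biggl(\sum_{\boldsymbol{n}}[\overleftarrow{\boldsymbol{n}}]A^{\dep\boldsymbol{n}}(yW)^{\lvert\boldsymbol{n}\rvert}\Biggr).
\]
Since $\boldsymbol{n}\mapsto\overleftarrow{\boldsymbol{n}}$ is a weight- and depth-preserving bijection on indices, the second factor equals $\sum_{\boldsymbol{n}}[\boldsymbol{n}]A^{\dep\boldsymbol{n}}(yW)^{|\boldsymbol{n}|}$. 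Two applications of Proposition~\ref{prop:gen_func_k} (with $W$ replaced by $xW$ and $yW$) then yield the claimed formula, since the multiplication in $\mathcal{I}[x,y][A][[W]]$ is the $\mathcal{I}[x,y][A]$-bilinear extension of the harmonic product $*$ in which $\Gamma_{1,\mathcal{I}}$ is defined.

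For the star version, I would proceed exactly as in the proof of Proposition~\ref{prop:gen_func_k}: the relation $\tilde{S}([\boldsymbol{k}]_{x,y})=(-1)^{\dep\boldsymbol{k}}[\boldsymbol{k}]_{x,y}^{\star}$ displayed just before the lemma gives
\[
 \sum_{\boldsymbol{k}}[\boldsymbol{k}]_{x,y}^{\star}A^{\dep\boldsymbol{k}}W^{\lvert\boldsymbol{k}\rvert}
 =\tilde{S}\Biggl(\sum_{\boldsymbol{k}}[\boldsymbol{k}]_{x,y}(-A)^{\dep\boldsymbol{k}}W^{\lvert\boldsymbol{k}\rvert}\Biggr),
\]
and applying the first identity with $A$ replaced by $-A$, together with $\tilde{S}(\Gamma_{1,\mathcal{I}}(W))=\Gamma_{1,\mathcal{I}}(W)^{-1}$ (extended to $\mathcal{I}[x,y][A][[W]]$ by acting trivially on $x,y,A,W$), produces the second formula.

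No real obstacle is anticipated: the two ingredients, namely the factorization of $[\boldsymbol{k}]\mapsto[\boldsymbol{k}]_{x,y}$ and Proposition~\ref{prop:gen_func_k}, do all the work. The only points requiring a little care are checking the reindexing over pairs $(\boldsymbol{m},\boldsymbol{n})$ and verifying that the ambient product in the power series ring is compatible with $*$, both of which are routine consequences of the Hopf-algebra setup established in Section~2.
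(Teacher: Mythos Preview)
Your argument is correct, and for the star identity it coincides exactly with the paper's proof via $\tilde{S}$. For the first identity, however, your route differs from the paper's: you expand $[\boldsymbol{k}]_{x,y}$ directly, reindex by the pair $(\boldsymbol{m},\boldsymbol{n})=(\boldsymbol{k}_i,\boldsymbol{k}^i)$, and factor the generating function as a $*$-product of two copies of the generating function in Proposition~\ref{prop:gen_func_k} (at $xW$ and $yW$). The paper instead applies the preceding lemma as a black box: since $[\boldsymbol{k}]\mapsto[\boldsymbol{k}]_{x,y}$ is an algebra homomorphism, it may be pushed through the exponential defining $\Gamma_{1,\mathcal{I}}$, where the single computation $[k]_{x,y}=[k](x^k+y^k)$ immediately gives $\Gamma_{1,\mathcal{I}}(W)\mapsto\Gamma_{1,\mathcal{I}}(xW)\Gamma_{1,\mathcal{I}}(yW)$, and then one simply transports the identity of Proposition~\ref{prop:gen_func_k}. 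Your approach is slightly more elementary in that it never actually uses the algebra-homomorphism property of the lemma---only the explicit definition of $[\boldsymbol{k}]_{x,y}$ and the bijection $(\boldsymbol{k},i)\leftrightarrow(\boldsymbol{m},\boldsymbol{n})$---whereas the paper's approach is cleaner and makes clear why the answer is obtained from Proposition~\ref{prop:gen_func_k} by the substitution $\Gamma_{1,\mathcal{I}}(W)\rightsquigarrow\Gamma_{1,\mathcal{I}}(xW)\Gamma_{1,\mathcal{I}}(yW)$.
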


\begin{proof}
 The first identity implies the second because
 \begin{align*}
  \sum_{\boldsymbol{k}}[\boldsymbol{k}]_{x,y}^{\star}A^{\dep\boldsymbol{k}}W^{\lvert\boldsymbol{k}\rvert}
  &=\tilde{S}\Biggl(\sum_{\boldsymbol{k}}[\boldsymbol{k}]_{x,y}(-A)^{\dep\boldsymbol{k}}W^{\lvert\boldsymbol{k}\rvert}\Biggr)\\
  &=\tilde{S}\biggl(\frac{\Gamma_{1,\mathcal{I}}(xW)\Gamma_{1,\mathcal{I}}(yW)}{\Gamma_{1,\mathcal{I}}(x(1+A)W)\Gamma_{1,\mathcal{I}}(y(1+A)W)}\biggr)\\
  &=\frac{\Gamma_{1,\mathcal{I}}(x(1+A)W)\Gamma_{1,\mathcal{I}}(y(1+A)W)}{\Gamma_{1,\mathcal{I}}(xW)\Gamma_{1,\mathcal{I}}(yW)}.
 \end{align*}
 Since the algebra homomorphism $[\boldsymbol{k}]\mapsto[\boldsymbol{k}]_{x,y}$ satisfies
 \[
  \Gamma_{1,\mathcal{I}}(W)
  \mapsto\exp\Biggl(\sum_{k=1}^{\infty}\frac{[k]_{x,y}}{k}W^k\Biggr)
  =\exp\Biggl(\sum_{k=1}^{\infty}\frac{[k](x^k+y^k)}{k}W^k\Biggr)
  =\Gamma_{1,\mathcal{I}}(xW)\Gamma_{1,\mathcal{I}}(yW),
 \]
 the first identity follows from Proposition~\ref{prop:gen_func_k}.
\end{proof}

\subsection{Generating functions of $\zeta^{(\star)}(\boldsymbol{k})$, $\zeta_{x,y}^{(\star)}(\boldsymbol{k})$, and $\zeta_S^{(\star)}(\boldsymbol{k})$}
We define $\mathbb{Q}$-linear maps $Z\colon\mathcal{I}\to\mathcal{Z}[T]$, $Z_S\colon\mathcal{I}\to\mathcal{Z}$, and $Z_{x,y}\colon\mathcal{I}\to\mathcal{Z}[T][x,y]$ by setting
$Z([\boldsymbol{k}])=\zeta(\boldsymbol{k})$, $Z_S([\boldsymbol{k}])=\zeta_S(\boldsymbol{k})=Z([\boldsymbol{k}]_{1,-1})$, and $Z_{x,y}([\boldsymbol{k}])=\zeta_{x,y}(\boldsymbol{k})=Z([\boldsymbol{k}]_{x,y})$.
Then they are all algebra homomorphisms, and satisfy
$Z([\boldsymbol{k}]^{\star})=\zeta^{\star}(\boldsymbol{k})$, $Z_S([\boldsymbol{k}]^{\star})=\zeta_S^{\star}(\boldsymbol{k})$, and $Z_{x,y}([\boldsymbol{k}]^{\star})=\zeta_{x,y}^{\star}(\boldsymbol{k})$.

We have $Z(\Gamma_{1,\mathcal{I}}(W))=\Gamma_1(W)$, and
Remark~\ref{rem:Gamma_1I_expansion} shows that
\begin{gather*}
 \Gamma_1(W)=\exp\Biggl(\sum_{k=1}^{\infty}\frac{\zeta(k)}{k}W^k\Biggr)=\sum_{\boldsymbol{k}}\zeta(\boldsymbol{k})W^{\lvert\boldsymbol{k}\rvert}=\sum_{k=0}^{\infty}\zeta^{\star}(\{1\}^k)W^k,\\
 \Gamma_1(W)^{-1}=\exp\Biggl(-\sum_{k=1}^{\infty}\frac{\zeta(k)}{k}W^k\Biggr)=\sum_{\boldsymbol{k}}(-1)^{\dep\boldsymbol{k}}\zeta^{\star}(\boldsymbol{k})W^{\lvert\boldsymbol{k}\rvert}=\sum_{k=0}^{\infty}(-1)^k\zeta(\{1\}^k)W^k.
\end{gather*}

\begin{prop}\label{prop:gen_func_zeta(k)}
 We have
 \[
  \sum_{\boldsymbol{k}}\zeta(\boldsymbol{k})A^{\dep\boldsymbol{k}}W^{\lvert\boldsymbol{k}\rvert}
  =\frac{\Gamma_1(W)}{\Gamma_1((1-A)W)},\qquad
  \sum_{\boldsymbol{k}}\zeta^{\star}(\boldsymbol{k})A^{\dep\boldsymbol{k}}W^{\lvert\boldsymbol{k}\rvert}
  =\frac{\Gamma_1((1+A)W)}{\Gamma_1(W)}
 \]
 in $\mathcal{Z}[T][A][[W]]$.
\end{prop}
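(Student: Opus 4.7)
The proposition is the image of Proposition~\ref{prop:gen_func_k} under the algebra homomorphism $Z\colon\mathcal{I}\to\mathcal{Z}[T]$, so the plan is simply to apply $Z$ coefficient by coefficient. Concretely, I extend $Z$ to a $\mathbb{Q}$-algebra homomorphism
\[
 Z\colon\mathcal{I}[A][[W]]\longrightarrow\mathcal{Z}[T][A][[W]],
\]
by letting it act on the coefficient of each monomial $A^rW^n$; this is well defined because that coefficient is a finite $\mathbb{Q}$-linear combination of symbols $[\boldsymbol{k}]$.

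First I would note that applying $Z$ to the left-hand sides of Proposition~\ref{prop:gen_func_k} yields precisely the left-hand sides of Proposition~\ref{prop:gen_func_zeta(k)}, using $Z([\boldsymbol{k}])=\zeta(\boldsymbol{k})$ and $Z([\boldsymbol{k}]^{\star})=\zeta^{\star}(\boldsymbol{k})$ term by term. For the right-hand sides, I would use the identity $Z(\Gamma_{1,\mathcal{I}}(W))=\Gamma_1(W)$ that is recorded immediately before the proposition. This identity itself follows from the fact that $Z$ is an algebra homomorphism: the logarithm $\sum_{k\ge1}[k]W^k/k$ lies in the ideal $(W)\subset\mathcal{I}[[W]]$, and a $\mathbb{Q}$-algebra homomorphism between power series rings that is continuous with respect to the $(W)$-adic topology automatically commutes with $\exp$ applied to such an element.

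Next I would handle the denominators. Since $\Gamma_{1,\mathcal{I}}((1-A)W)$ and $\Gamma_{1,\mathcal{I}}(W)$ have constant term $1$, they are units in $\mathcal{I}[A][[W]]$, and likewise for their images in $\mathcal{Z}[T][A][[W]]$. Because $Z$ is a ring homomorphism, it sends inverses to inverses, so
\[
 Z\!\left(\frac{\Gamma_{1,\mathcal{I}}(W)}{\Gamma_{1,\mathcal{I}}((1-A)W)}\right)=\frac{\Gamma_1(W)}{\Gamma_1((1-A)W)},\qquad
 Z\!\left(\frac{\Gamma_{1,\mathcal{I}}((1+A)W)}{\Gamma_{1,\mathcal{I}}(W)}\right)=\frac{\Gamma_1((1+A)W)}{\Gamma_1(W)}.
\]
Combining the two calculations gives both identities of Proposition~\ref{prop:gen_func_zeta(k)}.

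There is really no hard step here; the only thing requiring a brief justification is the commutation of $Z$ with the exponential defining $\Gamma_{1,\mathcal{I}}$, and with the formal inversion of a unit power series. Both are standard consequences of $Z$ being a $\mathbb{Q}$-algebra homomorphism applied levelwise to coefficients, so this proof is essentially a transport of structure along $Z$.
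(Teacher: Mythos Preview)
Your proof is correct and is exactly the paper's approach: the paper's proof reads simply ``Immediate from Proposition~\ref{prop:gen_func_k},'' and your write-up just spells out that applying the algebra homomorphism $Z$ coefficientwise carries both sides of Proposition~\ref{prop:gen_func_k} to those of Proposition~\ref{prop:gen_func_zeta(k)}.
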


\begin{proof}
 Immediate from Proposition~\ref{prop:gen_func_k}.
\end{proof}

\begin{prop}\label{prop:gen_func_zeta_xy(k)}
 We have
 \begin{align*}
  \sum_{\boldsymbol{k}}\zeta_{x,y}(\boldsymbol{k})A^{\dep\boldsymbol{k}}W^{\lvert\boldsymbol{k}\rvert}
  &=\frac{\Gamma_1(xW)\Gamma_1(yW)}{\Gamma_1(x(1-A)W)\Gamma_1(y(1-A)W)},\\
  \sum_{\boldsymbol{k}}\zeta_{x,y}^{\star}(\boldsymbol{k})A^{\dep\boldsymbol{k}}W^{\lvert\boldsymbol{k}\rvert}
  &=\frac{\Gamma_1(x(1+A)W)\Gamma_1(y(1+A)W)}{\Gamma_1(xW)\Gamma_1(yW)}
 \end{align*}
 in $\mathcal{Z}[T][x,y][A][[W]]$.
\end{prop}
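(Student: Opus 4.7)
The plan is to deduce Proposition~\ref{prop:gen_func_zeta_xy(k)} directly from Proposition~\ref{prop:gen_func_kxy} by applying the $\mathbb{Q}$-algebra homomorphism $Z_{x,y}\colon\mathcal{I}\to\mathcal{Z}[T][x,y]$ (equivalently, $Z$ composed with the algebra map $[\boldsymbol{k}]\mapsto[\boldsymbol{k}]_{x,y}$) extended coefficient-wise to the formal-power-series ring $\mathcal{I}[x,y][A][[W]]\to\mathcal{Z}[T][x,y][A][[W]]$. Since this extension is a ring homomorphism, it commutes with the operations appearing in Proposition~\ref{prop:gen_func_kxy}: the left-hand sides map to $\sum_{\boldsymbol{k}}\zeta_{x,y}(\boldsymbol{k})A^{\dep\boldsymbol{k}}W^{\lvert\boldsymbol{k}\rvert}$ and $\sum_{\boldsymbol{k}}\zeta_{x,y}^{\star}(\boldsymbol{k})A^{\dep\boldsymbol{k}}W^{\lvert\boldsymbol{k}\rvert}$ respectively, by the defining identities $Z([\boldsymbol{k}]_{x,y})=\zeta_{x,y}(\boldsymbol{k})$ and $Z([\boldsymbol{k}]_{x,y}^{\star})=\zeta_{x,y}^{\star}(\boldsymbol{k})$ recorded just before the proposition.

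For the right-hand sides, I would use that $Z(\Gamma_{1,\mathcal{I}}(W))=\Gamma_1(W)$, which follows from applying $Z$ termwise to the defining exponential series; consequently $Z(\Gamma_{1,\mathcal{I}}(cW))=\Gamma_1(cW)$ for any scalar expression $c\in\mathbb{Q}[x,y,A]$ by substitution in the formal variable $W$, and $Z(\Gamma_{1,\mathcal{I}}(W)^{-1})=\Gamma_1(W)^{-1}$ because ring homomorphisms preserve inverses of units (and $\Gamma_{1,\mathcal{I}}(W)$ has constant term~$1$, hence is a unit in the formal-power-series ring). Thus the right-hand side of the first identity in Proposition~\ref{prop:gen_func_kxy} maps to $\Gamma_1(xW)\Gamma_1(yW)/(\Gamma_1(x(1-A)W)\Gamma_1(y(1-A)W))$, and analogously for the starred identity.

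There is effectively no obstacle: both identities of Proposition~\ref{prop:gen_func_zeta_xy(k)} follow at once by applying $Z$ to the corresponding identities of Proposition~\ref{prop:gen_func_kxy}. The only step requiring any care is the bookkeeping that the map $Z$ extends to a ring homomorphism on the relevant formal-power-series ring (straightforward because $A$ and $W$ are formal variables over the base algebra, and the extension is defined coefficient-wise on the monomials $A^mW^n$), and that this extended map respects taking inverses of power series with invertible constant term. This is exactly the content of the one-line proof the authors indicate, and the presentation would just make these two observations explicit.
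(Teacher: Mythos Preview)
Your approach is correct and is exactly what the paper does (the paper's proof reads ``Immediate from Proposition~\ref{prop:gen_func_kxy}''). One small wording issue: the map you actually apply throughout is $Z\colon\mathcal{I}\to\mathcal{Z}[T]$, extended coefficient-wise to $\mathcal{I}[x,y][A][[W]]\to\mathcal{Z}[T][x,y][A][[W]]$, not $Z_{x,y}$; the latter would precompose with $[\boldsymbol{k}]\mapsto[\boldsymbol{k}]_{x,y}$ a second time, which is not what you want since Proposition~\ref{prop:gen_func_kxy} already involves $[\boldsymbol{k}]_{x,y}$.
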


\begin{proof}
 Immediate from Proposition~\ref{prop:gen_func_kxy}.
\end{proof}

\begin{cor}\label{cor:sum_zeta_xy_Q_riemann}
 If $r$ is a nonnegative integer and $w$ is an integer with $w\ge r$, then
 \[
  \sum_{\substack{\lvert\boldsymbol{k}\rvert=w\\\dep\boldsymbol{k}=r}}\zeta_{x,y}(\boldsymbol{k}),
  \sum_{\substack{\lvert\boldsymbol{k}\rvert=w\\\dep\boldsymbol{k}=r}}\zeta_{x,y}^{\star}(\boldsymbol{k})
  \in\mathbb{Q}[T,\zeta(2),\dots,\zeta(w)][x,y].
 \]
\end{cor}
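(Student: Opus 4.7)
The plan is to read off the statement by extracting the coefficient of $A^{r}W^{w}$ from the identities in Proposition~\ref{prop:gen_func_zeta_xy(k)}. The left-hand sides of those identities are precisely the generating functions of the two sums that appear in the corollary, so the coefficient of $A^{r}W^{w}$ on the left equals
\[
 \sum_{\substack{\lvert\boldsymbol{k}\rvert=w\\\dep\boldsymbol{k}=r}}\zeta_{x,y}(\boldsymbol{k})\qquad\text{resp.}\qquad\sum_{\substack{\lvert\boldsymbol{k}\rvert=w\\\dep\boldsymbol{k}=r}}\zeta_{x,y}^{\star}(\boldsymbol{k}).
\]
It remains to check that the coefficient of $A^{r}W^{w}$ on the right-hand side belongs to $\mathbb{Q}[T,\zeta(2),\dots,\zeta(w)][x,y]$.

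First I would record the basic fact that since $\Gamma_{1}(W)=\exp\bigl(\sum_{k\ge1}\zeta(k)W^{k}/k\bigr)$ with $\zeta(1)=T$, expanding the exponential and using only terms with $k\le w$ shows that the coefficient of $W^{w}$ in $\Gamma_{1}(W)$, and equally in $\Gamma_{1}(W)^{-1}=\exp\bigl(-\sum_{k\ge1}\zeta(k)W^{k}/k\bigr)$, lies in $\mathbb{Q}[T,\zeta(2),\dots,\zeta(w)]$. Substituting $xW$, $yW$, $x(1-A)W$, or $y(1-A)W$ for $W$ only multiplies each such coefficient by a monomial in $x$, $y$, and $(1-A)$ of total degree in $W$-scaling equal to $w$, so the coefficient of $W^{w}$ in each of the four factors $\Gamma_{1}(xW)$, $\Gamma_{1}(yW)$, $\Gamma_{1}(x(1-A)W)^{\pm1}$, $\Gamma_{1}(y(1-A)W)^{\pm1}$ sits in $\mathbb{Q}[T,\zeta(2),\dots,\zeta(w)][x,y,A]$ and is polynomial in $A$ of degree at most $w$.

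Next I would multiply the four factors together as formal power series in $W$ whose coefficients live in $\mathbb{Q}[T,\zeta(2),\dots][x,y,A]$; the coefficient of $W^{w}$ in the product only involves the individual coefficients of $W^{0},\dots,W^{w}$, hence lies in $\mathbb{Q}[T,\zeta(2),\dots,\zeta(w)][x,y,A]$. Finally, extracting the coefficient of $A^{r}$ from this polynomial in $A$ (no condition on $r$ beyond $w\ge r$ is needed, and the constraint $w\ge r$ is automatic on the left side since each component of $\boldsymbol{k}$ is at least $1$) yields an element of $\mathbb{Q}[T,\zeta(2),\dots,\zeta(w)][x,y]$. The same argument, applied to the second identity in Proposition~\ref{prop:gen_func_zeta_xy(k)} (with $1+A$ in place of $1-A$ and the roles of numerator and denominator swapped), handles the star case.

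I do not anticipate a substantive obstacle: the entire argument is a bookkeeping exercise in degrees once Proposition~\ref{prop:gen_func_zeta_xy(k)} is in hand. The only mildly delicate point is to confirm that inversion of $\Gamma_{1}(\cdot W)$ preserves the coefficient ring $\mathbb{Q}[T,\zeta(2),\dots,\zeta(w)]$ up to weight $w$, which is clear from the exponential formula used to define $\Gamma_{1}$.
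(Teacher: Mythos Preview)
Your proposal is correct and follows exactly the approach the paper takes: the paper's proof reads simply ``Immediate from Proposition~\ref{prop:gen_func_zeta_xy(k)},'' and you have spelled out precisely why it is immediate, by observing that the coefficients of $W^0,\dots,W^w$ in $\Gamma_1(W)^{\pm1}$ lie in $\mathbb{Q}[T,\zeta(2),\dots,\zeta(w)]$ and then extracting the coefficient of $A^rW^w$.
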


\begin{proof}
 Immediate from Proposition~\ref{prop:gen_func_zeta_xy(k)}.
\end{proof}

\begin{prop}\label{prop:gen_func_zeta_S(k)}
 We have
 \begin{align*}
  \sum_{\boldsymbol{k}}\zeta_S(\boldsymbol{k})A^{\dep\boldsymbol{k}}W^{\lvert\boldsymbol{k}\rvert}
  &=\frac{\pi W}{\sin\pi W}\cdot\frac{\sin\pi(1-A)W}{\pi(1-A)W},\\
  \sum_{\boldsymbol{k}}\zeta_S^{\star}(\boldsymbol{k})A^{\dep\boldsymbol{k}}W^{\lvert\boldsymbol{k}\rvert}
  &=\frac{\sin\pi W}{\pi W}\cdot\frac{\pi(1+A)W}{\sin\pi(1+A)W}
 \end{align*}
 in $\mathcal{Z}[A][[W]]$.
\end{prop}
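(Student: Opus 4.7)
The proof plan is to specialize Proposition~\ref{prop:gen_func_zeta_xy(k)} at $x=1$ and $y=-1$ and then apply Lemma~\ref{lem:Gamma_1(W)Gamma_1(-W)}. Recall that by construction $\zeta_S(\boldsymbol{k})=\zeta_{1,-1}(\boldsymbol{k})$ and $\zeta_S^{\star}(\boldsymbol{k})=\zeta_{1,-1}^{\star}(\boldsymbol{k})$, so the left-hand sides of the desired identities are exactly the left-hand sides of Proposition~\ref{prop:gen_func_zeta_xy(k)} with this specialization.

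After substituting $x=1,y=-1$, the right-hand side of the non-star identity becomes
\[
 \frac{\Gamma_1(W)\Gamma_1(-W)}{\Gamma_1((1-A)W)\Gamma_1(-(1-A)W)},
\]
and the right-hand side of the star identity becomes
\[
 \frac{\Gamma_1((1+A)W)\Gamma_1(-(1+A)W)}{\Gamma_1(W)\Gamma_1(-W)}.
\]
Next, I would apply Lemma~\ref{lem:Gamma_1(W)Gamma_1(-W)}, which converts each product $\Gamma_1(U)\Gamma_1(-U)$ into $\pi U/\sin\pi U$, yielding precisely the claimed closed forms once we rearrange the ratio. This immediately shows that the series lies in $\mathcal{Z}[A][[W]]$ rather than the a~priori larger ring $\mathcal{Z}[T][A][[W]]$, since the $\sin$-factors have no $T$-dependence (the $T$-variable cancels upon taking the ratios, matching the fact that $\zeta_S^{(\star)}(\boldsymbol{k})\in\mathcal{Z}$).

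There is essentially no obstacle here: the entire content has already been concentrated in Proposition~\ref{prop:gen_func_zeta_xy(k)} (the $x,y$-generalization of Proposition~\ref{prop:gen_func_k}) and in Lemma~\ref{lem:Gamma_1(W)Gamma_1(-W)}, so the step is a one-line specialization plus a substitution of the sine identity. The only thing worth stating explicitly is the star/non-star deduction via $\tilde{S}$, but this has already been handled inside Proposition~\ref{prop:gen_func_zeta_xy(k)}, so it needs no repetition.
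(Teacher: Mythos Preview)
Your proposal is correct and matches the paper's own proof exactly: the paper simply sets $x=1$ and $y=-1$ in Proposition~\ref{prop:gen_func_zeta_xy(k)} and then applies Lemma~\ref{lem:Gamma_1(W)Gamma_1(-W)}.
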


\begin{proof}
 Set $x=1$ and $y=-1$ in Proposition~\ref{prop:gen_func_zeta_xy(k)} and use Lemma~\ref{lem:Gamma_1(W)Gamma_1(-W)}.
\end{proof}

\begin{cor}
 If $r$ is a nonnegative integer and $w$ is an integer with $w\ge r$, then
 \[
  \sum_{\substack{\lvert\boldsymbol{k}\rvert=k\\\dep\boldsymbol{k}=r}}\zeta_S(\boldsymbol{k}),
  \sum_{\substack{\lvert\boldsymbol{k}\rvert=k\\\dep\boldsymbol{k}=r}}\zeta_S^{\star}(\boldsymbol{k})
  \begin{cases}
   =0&\text{if $w$ is odd};\\
   \in\mathbb{Q}\pi^w&\text{if $w$ is even}.
  \end{cases}
 \]
\end{cor}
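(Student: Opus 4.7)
The plan is to read the answer off Proposition~\ref{prop:gen_func_zeta_S(k)}. Both generating functions appearing there are built out of the two power series
\[
 \frac{\pi W}{\sin\pi W}=\sum_{k=0}^{\infty}c_k(\pi W)^{2k},\qquad
 \frac{\sin\pi W}{\pi W}=\sum_{k=0}^{\infty}\frac{(-1)^k}{(2k+1)!}(\pi W)^{2k},
\]
each of which is an \emph{even} power series in $W$ whose coefficient of $W^{2k}$ lies in $\mathbb{Q}\pi^{2k}$ (for the first series the $c_k$ are the familiar rational numbers related to Bernoulli numbers).

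First I would substitute $W\mapsto(1-A)W$ (respectively $W\mapsto(1+A)W$) in these series; this preserves the property of being even in $W$ and keeps the coefficient of $W^{2k}$ inside $\mathbb{Q}[A]\cdot\pi^{2k}$. Taking the product of two such factors, as in Proposition~\ref{prop:gen_func_zeta_S(k)}, therefore yields a power series in $W$ whose coefficient of $W^w$ vanishes when $w$ is odd and lies in $\mathbb{Q}[A]\cdot\pi^w$ when $w$ is even.

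Finally I would extract the coefficient of $A^rW^w$ from both generating functions. By Proposition~\ref{prop:gen_func_zeta_S(k)} this coefficient equals $\sum_{|\boldsymbol{k}|=w,\,\dep\boldsymbol{k}=r}\zeta_S(\boldsymbol{k})$ (and similarly for the star version), so it is $0$ when $w$ is odd and an element of $\mathbb{Q}\pi^w$ when $w$ is even, as required.

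There is no real obstacle: all work has been done upstream in Proposition~\ref{prop:gen_func_zeta_S(k)} and Lemma~\ref{lem:Gamma_1(W)Gamma_1(-W)}. The only thing to verify is the trivial parity/coefficient bookkeeping indicated above.
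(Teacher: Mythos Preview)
Your argument is correct and is essentially the same as the paper's: the paper simply observes that $\dfrac{\sin\pi W}{\pi W}$ and $\dfrac{\pi W}{\sin\pi W}$ lie in $\mathbb{Q}[\pi^2W^2]$ and invokes Proposition~\ref{prop:gen_func_zeta_S(k)}, which is exactly the parity/coefficient bookkeeping you spell out. Your mention of Lemma~\ref{lem:Gamma_1(W)Gamma_1(-W)} is slightly redundant, since it is already absorbed into Proposition~\ref{prop:gen_func_zeta_S(k)}.
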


\begin{proof}
 Since
 \[
  \frac{\sin\pi W}{\pi W},\frac{\pi W}{\sin\pi W}\in\mathbb{Q}[\pi^2W^2],
 \]
 the corollary is immediate from Proposition~\ref{prop:gen_func_zeta_S(k)}.
\end{proof}

\section{Schur multiple zeta values of anti-hook type}
\subsection{Schur multiple zeta values of anti-hook type}
When investigating the relationship between the sum formulas for multiple zeta(-star) values and symmetric multiple zeta(-star) values, we find it necessary to use the Schur multiple zeta values
(defined by Nakasuji, Phuksuwan, and Yamasaki \cite{NPY18}) of anti-hook type.
If $(k_1,\dots,k_r)$ and $(l_1,\dots,l_s)$ are indices and $a$ is a positive integer with $a\ge2$, then the \emph{Schur multiple zeta value of anti-hook type} is defined as
\[
 \zeta\left(\;\ytableausetup{centertableaux}\begin{ytableau}\none&\none&\none&k_1\\\none&\none&\none&\vdots\\\none&\none&\none&k_r\\l_1&\cdots&l_s&a\end{ytableau}\;\right)
 =\sum\frac{1}{m_1^{k_1}\dotsm m_r^{k_r}n_1^{l_1}\dotsm n_s^{l_s}p^a},
\]
where the sum runs over all positive integers $m_1,\dots,m_r,n_1,\dots,n_s,p$ satisfying $m_1<\dots<m_r<p$ and $n_1\le\dots\le n_s\le p$.
To save space, we write $\left(\begin{array}{c}k_1,\dots,k_r\\l_1,\dots,l_s\end{array};a\right)$ for what lies between the parentheses in the left-hand side.
The Schur multiple zeta values of anti-hook type are a common generalization of the multiple zeta and zeta-star values:
\[
 \zeta\left(\begin{array}{c}\boldsymbol{k}\\\emptyset\end{array};a\right)=\zeta(\boldsymbol{k},a),\qquad
 \zeta\left(\begin{array}{c}\emptyset\\\boldsymbol{l}\end{array};a\right)=\zeta^{\star}(\boldsymbol{l},a)
\]
if $\boldsymbol{k}$ and $\boldsymbol{l}$ are indices and $a\ge2$.
An advantage of using the Schur multiple zeta values of anti-hook type is that
they allow us to succinctly express the harmonic product of the multiple zeta value and the multiple zeta-star value:
\begin{align*}
 \zeta(k_1,\dots,k_r)\zeta^{\star}(l_1,\dots,l_s)
 &=\Biggl(\sum_{1\le m_1<\dots<m_r}\frac{1}{m_1^{k_1}\dotsm m_r^{k_r}}\Biggr)\Biggl(\sum_{1\le n_1\le\dots\le n_s}\frac{1}{n_1^{l_1}\dotsm n_s^{l_s}}\Biggr)\\
 &=\Biggl(\sum_{\substack{1\le m_1<\dots<m_r\\1\le n_1\le\dots\le n_s\\m_r\ge n_s}}+\sum_{\substack{1\le m_1<\dots<m_r\\1\le n_1\le\dots\le n_s\\m_r<n_s}}\Biggr)\frac{1}{m_1^{k_1}\dotsm m_r^{k_r}n_1^{l_1}\dotsm n_s^{l_s}}\\
 &=\zeta\left(\begin{array}{c}k_1,\dots,k_{r-1}\\l_1,\dots,l_s\end{array};k_r\right)+\zeta\left(\begin{array}{c}k_1,\dots,k_r\\l_1,\dots,l_{s-1}\end{array};l_s\right)
\end{align*}
if $(k_1,\dots,k_r)$ and $(l_1,\dots,l_s)$ are nonempty admissible indices.
Observe that the Schur multiple zeta value can always be written as a $\mathbb{Q}$-linear combination of multiple zeta values; for example we have
\[
 \zeta\left(\begin{array}{c}k\\l\end{array};a\right)
 =\zeta(k,l,a)+\zeta(l,k,a)+\zeta(k+l,a)+\zeta(k,l+a).
\]

\subsection{Elements of $\mathcal{I}$ corresponding to Schur multiple zeta values of anti-hook type}
The observations made in the previous subsection lead us to the following formal definition of $\left[\begin{array}{c}k_1,\dots,k_r\\l_1,\dots,l_s\end{array};a\right]\in\mathcal{I}$:
\begin{defn}\label{defn:Schur}
 We define
 \[
  \left[\begin{array}{c}k_1,\dots,k_r\\l_1,\dots,l_s\end{array};a\right]\in\mathcal{I}
 \]
 for each pair of indices $(k_1,\dots,k_r)$ and $(l_1,\dots,l_s)$ and each positive integer $a$
 so that the following properties are fulfilled:
 \begin{enumerate}
  \item $\left[\begin{array}{c}k_1,\dots,k_r\\\emptyset\end{array};a\right]=[k_1,\dots,k_r,a]$ and $\left[\begin{array}{c}\emptyset\\l_1,\dots,l_s\end{array};a\right]=[l_1,\dots,l_s,a]^{\star}$
   whenever $(k_1,\dots,k_r)$ and $(l_1,\dots,l_s)$ are indices and $a$ is a positive integer;
  \item $\left[\begin{array}{c}k_1,\dots,k_{r-1}\\l_1,\dots,l_s\end{array};k_r\right]+\left[\begin{array}{c}k_1,\dots,k_r\\l_1,\dots,l_{s-1}\end{array};l_s\right]=[k_1,\dots,k_r]*[l_1,\dots,l_s]^{\star}$
   whenever $(k_1,\dots,k_r)$ and $(l_1,\dots,l_s)$ are nonempty indices.
 \end{enumerate}
\end{defn}

Observe that the definition above does indeed uniquely determine
$\left[\begin{array}{c}k_1,\dots,k_r\\l_1,\dots,l_s\end{array};a\right]\in\mathcal{I}$
as the following example illustrates.
The required properties imply that
\begin{align*}
 \left[\begin{array}{c}k_1,k_2,k_3\\\emptyset\end{array};k_4\right]\phantom{+\left[\begin{array}{c}k_1,k_2\\k_4\end{array};k_3\right]+\left[\begin{array}{c}k_1\\k_4,k_3\end{array};k_2\right]+\left[\begin{array}{c}\emptyset\\k_4,k_3,k_2\end{array};k_1\right]}&=[k_1,k_2,k_3,k_4],\\
 \left[\begin{array}{c}k_1,k_2,k_3\\\emptyset\end{array};k_4\right]+\left[\begin{array}{c}k_1,k_2\\k_4\end{array};k_3\right]\phantom{+\left[\begin{array}{c}k_1\\k_4,k_3\end{array};k_2\right]+\left[\begin{array}{c}\emptyset\\k_4,k_3,k_2\end{array};k_1\right]}&=[k_1,k_2,k_3]*[k_4]^{\star},\\
 \phantom{\left[\begin{array}{c}k_1,k_2,k_3\\\emptyset\end{array};k_4\right]+{}}\left[\begin{array}{c}k_1,k_2\\k_4\end{array};k_3\right]+\left[\begin{array}{c}k_1\\k_4,k_3\end{array};k_2\right]\phantom{+\left[\begin{array}{c}\emptyset\\k_4,k_3,k_2\end{array};k_1\right]}&=[k_1,k_2]*[k_4,k_3]^{\star},\\
 \phantom{\left[\begin{array}{c}k_1,k_2,k_3\\\emptyset\end{array};k_4\right]+\left[\begin{array}{c}k_1,k_2\\k_4\end{array};k_3\right]+{}}\left[\begin{array}{c}k_1\\k_4,k_3\end{array};k_2\right]+\left[\begin{array}{c}\emptyset\\k_4,k_3,k_2\end{array};k_1\right]&=[k_1]*[k_4,k_3,k_2]^{\star},\\
 \phantom{\left[\begin{array}{c}k_1,k_2,k_3\\\emptyset\end{array};k_4\right]+\left[\begin{array}{c}k_1,k_2\\k_4\end{array};k_3\right]+\left[\begin{array}{c}k_1\\k_4,k_3\end{array};k_2\right]+{}}\left[\begin{array}{c}\emptyset\\k_4,k_3,k_2\end{array};k_1\right]&=[k_4,k_3,k_2,k_1]^{\star}.
\end{align*}
Although the requirements are superfluous (there are $4$ unknowns and $5$ equations in the example above), the third remark after Theorem~\ref{thm:I_Hopf_alg} shows that they are compatible.

\begin{prop}
 If $(k_1,\dots,k_r)$ and $(l_1,\dots,l_s)$ are indices and $a$ is a positive integer, then we have
 \[
  \tilde{S}\biggl(\left[\begin{array}{c}k_1,\dots,k_r\\l_1,\dots,l_s\end{array};a\right]\biggr)=(-1)^{r+s+1}\left[\begin{array}{c}l_1,\dots,l_s\\k_1,\dots,k_r\end{array};a\right].
 \]
\end{prop}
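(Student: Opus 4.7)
The plan is to proceed by induction on $s$. The base case $s=0$ is immediate from Definition~\ref{defn:Schur}~(1): $\left[\begin{array}{c}k_1,\dots,k_r\\\emptyset\end{array};a\right]=[k_1,\dots,k_r,a]$ has depth $r+1$, so $\tilde{S}$ sends it to $(-1)^{r+1}[k_1,\dots,k_r,a]^{\star}=(-1)^{r+1}\left[\begin{array}{c}\emptyset\\k_1,\dots,k_r\end{array};a\right]$, which matches the claimed formula.

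For the inductive step $s\geq 1$, I would first rewrite Definition~\ref{defn:Schur}~(2) in two reindexed forms, obtained by adjoining $a$ to the top or bottom row and applying~(2) to the augmented index pair:
\begin{align*}
\left[\begin{array}{c}k_1,\dots,k_r\\l_1,\dots,l_s\end{array};a\right]+\left[\begin{array}{c}k_1,\dots,k_r,a\\l_1,\dots,l_{s-1}\end{array};l_s\right]&=[k_1,\dots,k_r,a]*[l_1,\dots,l_s]^{\star},\\
\left[\begin{array}{c}k_1,\dots,k_{r-1}\\l_1,\dots,l_s,a\end{array};k_r\right]+\left[\begin{array}{c}k_1,\dots,k_r\\l_1,\dots,l_s\end{array};a\right]&=[k_1,\dots,k_r]*[l_1,\dots,l_s,a]^{\star}.
\end{align*}

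Next, I would apply $\tilde{S}$ to the first identity above. Using that $\tilde{S}$ is an algebra homomorphism and an involution with $\tilde{S}([\boldsymbol{k}])=(-1)^{\dep\boldsymbol{k}}[\boldsymbol{k}]^{\star}$ (and hence $\tilde{S}([\boldsymbol{k}]^{\star})=(-1)^{\dep\boldsymbol{k}}[\boldsymbol{k}]$), the right-hand side becomes $(-1)^{r+s+1}[k_1,\dots,k_r,a]^{\star}*[l_1,\dots,l_s]$. The second bracket on the left-hand side has bottom-row length $s-1<s$, so the inductive hypothesis applies and gives $(-1)^{r+s+1}\left[\begin{array}{c}l_1,\dots,l_{s-1}\\k_1,\dots,k_r,a\end{array};l_s\right]$. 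Solving for $\tilde{S}\bigl(\left[\begin{array}{c}k_1,\dots,k_r\\l_1,\dots,l_s\end{array};a\right]\bigr)$ yields $(-1)^{r+s+1}$ times a bracket that, by the second reindexed relation above with the $k$'s and $l$'s swapped (valid since the swapped top row has length $s\geq 1$), is exactly $\left[\begin{array}{c}l_1,\dots,l_s\\k_1,\dots,k_r\end{array};a\right]$.

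The main subtlety is recognizing that relation~(2), which at first glance only connects brackets whose corners are drawn from the given indices, can be reindexed to give a recursion on $s$ valid for an arbitrary corner $a$; after that, the argument reduces to routine sign tracking—keeping the $(-1)^{\dep}$ factors straight for both plain and starred brackets—and verifying that the final invocation of the second reindexed relation (with roles of $k$ and $l$ swapped) lies in its legal regime.
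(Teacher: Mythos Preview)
Your induction on $s$ is correct: the base case and inductive step both go through, and the sign bookkeeping is right. The one stylistic wrinkle is that your ``second reindexed relation'' as displayed is never used literally---what you actually apply is Definition~\ref{defn:Schur}~(2) with top row $(l_1,\dots,l_s)$ and bottom row $(k_1,\dots,k_r,a)$---but you say as much, and the requisite nonemptiness hypothesis ($s\ge1$) is satisfied.

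The paper takes a different route. Rather than inducting, it observes that since $\tilde{S}$ is an involution, the claim is equivalent to showing that the assignment $((k_1,\dots,k_r),(l_1,\dots,l_s),a)\mapsto(-1)^{r+s+1}\tilde{S}\bigl(\left[\begin{smallmatrix}l_1,\dots,l_s\\k_1,\dots,k_r\end{smallmatrix};a\right]\bigr)$ satisfies the two defining properties of Definition~\ref{defn:Schur}; since those properties determine the bracket uniquely, the assignment must coincide with $\left[\begin{smallmatrix}k_1,\dots,k_r\\l_1,\dots,l_s\end{smallmatrix};a\right]$. This characterization argument avoids tracking an induction variable and makes the structural reason for the identity transparent. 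Your approach, on the other hand, is more hands-on and does not lean on the uniqueness discussion (which the paper only sketches by example), so it is arguably more self-contained. Both arguments ultimately rest on the same recursion; they just package it differently.
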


\begin{proof}
 Since $\tilde{S}$ is an involution, the proposition is equivalent to showing that
 \[
  \left[\begin{array}{c}k_1,\dots,k_r\\l_1,\dots,l_s\end{array};a\right]=(-1)^{r+s+1}\tilde{S}\biggl(\left[\begin{array}{c}l_1,\dots,l_s\\k_1,\dots,k_r\end{array};a\right]\biggr).
 \]
 The map $((k_1,\dots,k_r),(l_1,\dots,l_s),a)\mapsto(-1)^{r+s+1}\tilde{S}\biggl(\left[\begin{array}{c}l_1,\dots,l_s\\k_1,\dots,k_r\end{array};a\right]\biggr)$ satisfies the properties in Definition~\ref{defn:Schur} because
 \begin{align*}
  (-1)^{r+1}\tilde{S}\biggl(\left[\begin{array}{c}\emptyset\\k_1,\dots,k_r\end{array};a\right]\biggr)=(-1)^{r+1}\tilde{S}([k_1,\dots,k_r,a]^{\star})=[k_1,\dots,k_r,a],\\
  (-1)^{s+1}\tilde{S}\biggl(\left[\begin{array}{c}l_1,\dots,l_s\\\emptyset\end{array};a\right]\biggr)=(-1)^{s+1}\tilde{S}([l_1,\dots,l_s,a])=[l_1,\dots,l_s,a]^{\star}
 \end{align*}
 and
 \begin{align*}
  &(-1)^{r+s}\tilde{S}\biggl(\left[\begin{array}{c}l_1,\dots,l_s\\k_1,\dots,k_{r-1}\end{array};k_r\right]\biggr)+(-1)^{r+s}\tilde{S}\biggl(\left[\begin{array}{c}l_1,\dots,l_{s-1}\\k_1,\dots,k_r\end{array};l_s\right]\biggr)\\
  &=(-1)^{r+s}\tilde{S}\biggl(\left[\begin{array}{c}l_1,\dots,l_s\\k_1,\dots,k_{r-1}\end{array};k_r\right]+\left[\begin{array}{c}l_1,\dots,l_{s-1}\\k_1,\dots,k_r\end{array};l_s\right]\biggr)\\
  &=(-1)^{r+s}\tilde{S}([l_1,\dots,l_s]*[k_1,\dots,k_r]^{\star})\\
  &=(-1)^{r+s}\tilde{S}([l_1,\dots,l_s])*\tilde{S}([k_1,\dots,k_r]^{\star})\\
  &=[l_1,\dots,l_s]^{\star}*[k_1,\dots,k_r],
 \end{align*}
 from which the proposition follows.
\end{proof}

\begin{lem}\label{lem:alternating2}
 If $\boldsymbol{k}$ and $\boldsymbol{l}$ are indices and $a$ is a positive integer, then we have
 \[
  \sum_{j=0}^{s}(-1)^j[\boldsymbol{k},a,\boldsymbol{l}_j]*[\overleftarrow{\boldsymbol{l}^j}]^{\star}=\left[\begin{array}{c}\boldsymbol{k}\\\overleftarrow{\boldsymbol{l}}\end{array};a\right],
 \]
 where $s=\dep\boldsymbol{l}$.
\end{lem}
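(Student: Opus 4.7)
The plan is to prove Lemma~\ref{lem:alternating2} by induction on $s = \dep\boldsymbol{l}$, with property (2) of Definition~\ref{defn:Schur} doing essentially all of the work.

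The base case $s = 0$ is immediate: the sum on the left collapses to the single term $[\boldsymbol{k}, a] * [\emptyset]^{\star} = [\boldsymbol{k}, a]$, which equals the right-hand side $\left[\begin{array}{c}\boldsymbol{k}\\\emptyset\end{array}; a\right]$ by property (1) of Definition~\ref{defn:Schur}.

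For the inductive step with $s \geq 1$, I would apply property (2) of Definition~\ref{defn:Schur} to the right-hand side, reading it as a Schur element with top sequence $(k_1, \ldots, k_r, a)$ (last entry $a$) and bottom-before-corner sequence $\overleftarrow{\boldsymbol{l}} = (l_s, \ldots, l_1)$ (last entry $l_1$). This produces
\[
\left[\begin{array}{c}\boldsymbol{k}\\\overleftarrow{\boldsymbol{l}}\end{array}; a\right] = [\boldsymbol{k}, a] * [\overleftarrow{\boldsymbol{l}}]^{\star} - \left[\begin{array}{c}\boldsymbol{k}, a\\l_s, \ldots, l_2\end{array}; l_1\right].
\]
I would then apply the inductive hypothesis to the remaining Schur element, letting $(\boldsymbol{k}, a)$, $l_1$, and $(l_2, \ldots, l_s)$ play the roles of $\boldsymbol{k}$, $a$, and $\boldsymbol{l}$, respectively. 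This expands the trailing Schur element as an alternating sum of length $s$; after reindexing so that the summation variable advances by one, the sign flip combines with the subtraction, and the leading term $[\boldsymbol{k}, a] * [\overleftarrow{\boldsymbol{l}}]^{\star}$ contributes the $j = 0$ summand, reproducing precisely $\sum_{j = 0}^{s} (-1)^j [\boldsymbol{k}, a, \boldsymbol{l}_j] * [\overleftarrow{\boldsymbol{l}^j}]^{\star}$.

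The only obstacle I anticipate is notational: keeping the reversals and slicings straight, i.e., verifying that under the substitution $(\boldsymbol{k}, a, l_1, \ldots, l_s) \rightsquigarrow ((\boldsymbol{k}, a), l_1, (l_2, \ldots, l_s))$, the primed $\boldsymbol{l}'_j$ matches $\boldsymbol{l}_{j+1}$ and $\overleftarrow{\boldsymbol{l}'^{j}}$ matches $\overleftarrow{\boldsymbol{l}^{j+1}}$. Once that bookkeeping is handled, the induction closes in one line.
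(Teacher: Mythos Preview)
Your proof is correct and rests on the same idea as the paper's. Rather than induction, the paper presents it as a direct telescoping: property~(2) of Definition~\ref{defn:Schur} expresses each $[\boldsymbol{k},a,\boldsymbol{l}_j]*[\overleftarrow{\boldsymbol{l}^j}]^{\star}$ as a sum of two anti-hook Schur elements of consecutive shape, so the alternating sum collapses in one stroke, sidestepping the reindexing bookkeeping you anticipate.
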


\begin{proof}
 If we write $\boldsymbol{l}=(l_1,\dots,l_s)$, then Definition~\ref{defn:Schur} implies that
 \[
  [\boldsymbol{k},a,\boldsymbol{l}_j]*[\overleftarrow{\boldsymbol{l}^j}]^{\star}=\left[\begin{array}{c}\boldsymbol{k},a,\boldsymbol{l}_{j-1}\\\overleftarrow{\boldsymbol{l}^j}\end{array};l_j\right]+\left[\begin{array}{c}\boldsymbol{k},a,\boldsymbol{l}_j\\\overleftarrow{\boldsymbol{l}^{j+1}}\end{array};l_{j+1}\right]
 \]
 for $j=0,\dots,s$, where in the right-hand side we understand that the first term is $\left[\begin{array}{c}\boldsymbol{k}\\\overleftarrow{\boldsymbol{l}}\end{array};a\right]$ if $j=0$ and that the second term is $0$ if $j=s$.
 This immediately implies the lemma.
\end{proof}

If $\boldsymbol{k}=(k_1,\dots,k_r)$ is an index and $i$ and $i'$ are integers with $0\le i\le i'\le r$,
then we write $\boldsymbol{k}^i_{i'}=(k_{i+1},\dots,k_{i'})$.

\begin{lem}\label{lem:alternating3}
 If $\boldsymbol{k}$ and $\boldsymbol{l}$ are indices and $a$ is a positive integer, then we have
 \[
  \sum_{j=0}^{s}(-1)^j\left[\begin{array}{c}\boldsymbol{k}\\\overleftarrow{\boldsymbol{l}_j}\end{array};a\right]*[\boldsymbol{l}^j]=[\boldsymbol{k},a,\boldsymbol{l}],
 \]
 where $s=\dep\boldsymbol{l}$.
\end{lem}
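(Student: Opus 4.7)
The plan is to expand each Schur symbol on the left-hand side via Lemma~\ref{lem:alternating2}, swap the order of summation, and recognize the resulting inner sum as an antipode identity in the Hopf algebra $\mathcal{I}$. Writing $\boldsymbol{l}=(l_1,\dots,l_s)$, I would first apply Lemma~\ref{lem:alternating2} with $\boldsymbol{l}_j$ in place of $\boldsymbol{l}$ to obtain
\[
 \left[\begin{array}{c}\boldsymbol{k}\\\overleftarrow{\boldsymbol{l}_j}\end{array};a\right]=\sum_{i=0}^{j}(-1)^i[\boldsymbol{k},a,\boldsymbol{l}_i]*[(l_j,\dots,l_{i+1})]^{\star}.
\]
After substituting into the left-hand side and exchanging the summations over $i$ and $j$, the identity to prove becomes $\sum_{i=0}^{s}(-1)^i[\boldsymbol{k},a,\boldsymbol{l}_i]*T_i=[\boldsymbol{k},a,\boldsymbol{l}]$, where
\[
 T_i=\sum_{j=i}^{s}(-1)^j[(l_j,\dots,l_{i+1})]^{\star}*[(l_{j+1},\dots,l_s)].
\]

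Next I would set $\boldsymbol{n}=(l_{i+1},\dots,l_s)$, an index of depth $r=s-i$, and reindex via $p=j-i$, so that
\[
 T_i=(-1)^i\sum_{p=0}^{r}(-1)^p[\overleftarrow{\boldsymbol{n}_p}]^{\star}*[\boldsymbol{n}^p].
\]
Since $S([\boldsymbol{n}_p])=(-1)^p[\overleftarrow{\boldsymbol{n}_p}]^{\star}$, this sum coincides with $\sum_{p=0}^{r}S([\boldsymbol{n}_p])*[\boldsymbol{n}^p]$, which equals $\delta_{\boldsymbol{n},\emptyset}[\emptyset]$ by the antipode axiom for the Hopf algebra $\mathcal{I}$. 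Consequently $T_i=0$ for $i<s$ while $T_s=(-1)^s[\emptyset]$, so only the $i=s$ term survives and the alternating sum collapses to $(-1)^s[\boldsymbol{k},a,\boldsymbol{l}]\cdot(-1)^s=[\boldsymbol{k},a,\boldsymbol{l}]$.

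The main obstacle is justifying the above antipode identity, because only its dual (with $S$ applied to the right-hand tensor factor) is spelled out in the third bullet after Theorem~\ref{thm:I_Hopf_alg}. In any Hopf algebra both identities $\sum S(a_{(1)})\cdot a_{(2)}=\epsilon(a)\cdot 1=\sum a_{(1)}\cdot S(a_{(2)})$ hold by definition, so the required form can simply be invoked; alternatively, one can deduce it from the stated form by combining commutativity of $*$ with the involution $\tilde S$ (using $\tilde S([\boldsymbol{k}]^{\star})=(-1)^{\dep\boldsymbol{k}}[\boldsymbol{k}]$). With this identity in hand, the rest of the argument is routine bookkeeping.
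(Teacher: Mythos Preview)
Your proof is correct and follows essentially the same route as the paper's: both expand the Schur symbol via Lemma~\ref{lem:alternating2}, swap the order of summation, and collapse the inner sum using the antipode identity in $\mathcal{I}$. Your discussion of which form of the antipode identity is needed (and why it is available) makes explicit exactly the step the paper leaves implicit.
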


\begin{proof}
 Lemma~\ref{lem:alternating2} shows that
 \begin{align*}
  \sum_{j=0}^{s}(-1)^j\left[\begin{array}{c}\boldsymbol{k}\\\overleftarrow{\boldsymbol{l}_j}\end{array};a\right]*[\boldsymbol{l}^j]
  &=\sum_{j=0}^{s}(-1)^j\Biggl(\sum_{j'=0}^{j}(-1)^{j'}[\boldsymbol{k},a,\boldsymbol{l}_{j'}]*[\overleftarrow{\boldsymbol{l}^{j'}_j}]^{\star}\Biggr)*[\boldsymbol{l}^j]\\
  &=\sum_{j'=0}^{s}(-1)^{s-j'}[\boldsymbol{k},a,\boldsymbol{l}_{j'}]*\Biggl(\sum_{j=j'}^{s}(-1)^{s-j}[\overleftarrow{\boldsymbol{l}^{j'}_j}]^{\star}*[\boldsymbol{l}^j]\Biggr)\\
  &=[\boldsymbol{k},a,\boldsymbol{l}],
 \end{align*}
 which completes the proof.
\end{proof}

The following lemma will be the key to explaining the relationship between the sum formulas for multiple zeta(-star) values and symmetric multiple zeta(-star) values and also to proving our main theorem:
\begin{lem}\label{lem:key}
 If $\boldsymbol{k}$ and $\boldsymbol{l}$ are indices and $a$ is a positive integer, then we have
 \begin{align*}
  [\boldsymbol{k},a,\boldsymbol{l}]_{x,y}
  &=\sum_{i=0}^{r}(-1)^{r-i}\left[\begin{array}{c}\overleftarrow{\boldsymbol{l}}\\\boldsymbol{k}^i\end{array};a\right]y^{\lvert\boldsymbol{k}^i\rvert+a+\lvert\boldsymbol{l}\rvert}*[\boldsymbol{k}_i]_{x,y}
    +\sum_{j=0}^{s}(-1)^j\left[\begin{array}{c}\boldsymbol{k}\\\overleftarrow{\boldsymbol{l}_j}\end{array};a\right]x^{\lvert\boldsymbol{k}\rvert+a+\lvert\boldsymbol{l}_j\rvert}*[\boldsymbol{l}^j]_{x,y},\\
  [\boldsymbol{k},a,\boldsymbol{l}]_{x,y}^{\star}
  &=\sum_{i=0}^{r}(-1)^{r-i}\left[\begin{array}{c}\boldsymbol{k}^i\\\overleftarrow{\boldsymbol{l}}\end{array};a\right]y^{\lvert\boldsymbol{k}^i\rvert+a+\lvert\boldsymbol{l}\rvert}*[\boldsymbol{k}_i]_{x,y}^{\star}
    +\sum_{j=0}^{s}(-1)^j\left[\begin{array}{c}\overleftarrow{\boldsymbol{l}_j}\\\boldsymbol{k}\end{array};a\right]x^{\lvert\boldsymbol{k}\rvert+a+\lvert\boldsymbol{l}_j\rvert}*[\boldsymbol{l}^j]_{x,y}^{\star},
 \end{align*}
 where $r=\dep\boldsymbol{k}$ and $s=\dep\boldsymbol{l}$.
\end{lem}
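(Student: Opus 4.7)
My plan is to prove the first (non-star) identity directly by expanding both sides and matching them using Lemma~\ref{lem:alternating3} twice; the star identity will then follow by applying the involutive algebra homomorphism $\tilde{S}$.

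To expand the LHS of the non-star identity, I use the definition of $[\cdot]_{x,y}$ applied to the index $(\boldsymbol{k},a,\boldsymbol{l})$ of depth $r+s+1$. The $r+s+2$ splits fall into two groups: those at a position $0\le p\le r$ within $\boldsymbol{k}$ (so that $a$ sits inside the reversed suffix), contributing $[\boldsymbol{k}_p]*[\overleftarrow{\boldsymbol{l}},a,\overleftarrow{\boldsymbol{k}^p}]x^{|\boldsymbol{k}_p|}y^{|\boldsymbol{k}^p|+a+|\boldsymbol{l}|}$, and those strictly past $a$ at a position corresponding to $0\le q\le s$, contributing $[\boldsymbol{k},a,\boldsymbol{l}_q]*[\overleftarrow{\boldsymbol{l}^q}]x^{|\boldsymbol{k}|+a+|\boldsymbol{l}_q|}y^{|\boldsymbol{l}^q|}$.

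For the RHS, I substitute the definitions of $[\boldsymbol{k}_i]_{x,y}$ and $[\boldsymbol{l}^j]_{x,y}$ and interchange the order of summation in each of the two double sums. In the second sum, with the outer variable fixed at $q$, the inner alternating sum over $j$ is precisely the LHS of Lemma~\ref{lem:alternating3} applied to $\boldsymbol{k}$, $\boldsymbol{l}_q$, $a$, which evaluates to $[\boldsymbol{k},a,\boldsymbol{l}_q]$; this recovers the second half of the LHS verbatim. In the first sum, with outer variable fixed at $p$, the inner sum
\[
 \sum_{i=p}^{r}(-1)^{r-i}\left[\begin{array}{c}\overleftarrow{\boldsymbol{l}}\\ \boldsymbol{k}^i\end{array};a\right]*[\overleftarrow{\boldsymbol{k}^p_i}]
\]
becomes, after the change of variable $j=r-i$ and the identifications $\overleftarrow{(\overleftarrow{\boldsymbol{k}^p})_j}=\boldsymbol{k}^{r-j}$ and $(\overleftarrow{\boldsymbol{k}^p})^j=\overleftarrow{\boldsymbol{k}^p_{r-j}}$, an instance of Lemma~\ref{lem:alternating3} for $\overleftarrow{\boldsymbol{l}}$, $\overleftarrow{\boldsymbol{k}^p}$, $a$, and hence equals $[\overleftarrow{\boldsymbol{l}},a,\overleftarrow{\boldsymbol{k}^p}]$. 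Commutativity of the harmonic product then matches the RHS with the LHS term by term.

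For the star identity, I apply $\tilde{S}$ to both sides of the proven non-star identity. Using that $\tilde{S}([\boldsymbol{k},a,\boldsymbol{l}]_{x,y})=(-1)^{r+s+1}[\boldsymbol{k},a,\boldsymbol{l}]_{x,y}^{\star}$, $\tilde{S}([\boldsymbol{k}_i]_{x,y})=(-1)^i[\boldsymbol{k}_i]_{x,y}^{\star}$, $\tilde{S}([\boldsymbol{l}^j]_{x,y})=(-1)^{s-j}[\boldsymbol{l}^j]_{x,y}^{\star}$, and the row-swap identity $\tilde{S}\bigl(\left[\begin{array}{c}\overleftarrow{\boldsymbol{l}}\\ \boldsymbol{k}^i\end{array};a\right]\bigr)=(-1)^{s+r-i+1}\left[\begin{array}{c}\boldsymbol{k}^i\\ \overleftarrow{\boldsymbol{l}}\end{array};a\right]$ (and the analogue for the second Schur symbol) from the preceding proposition, a short sign computation shows that the factor $(-1)^{r+s+1}$ appearing on the LHS cancels uniformly against the prefactors generated in each summand on the RHS, producing exactly the stated star identity. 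The main obstacle is the bookkeeping in the first sum on the RHS: one must carefully translate between subindices of $\overleftarrow{\boldsymbol{k}^p}$ read right-to-left and subindices of $\boldsymbol{k}$ read left-to-right, and track the sign after the change of variable $j\mapsto r-i$, so as to recognize the inner alternating sum as an instance of Lemma~\ref{lem:alternating3} rather than something superficially different.
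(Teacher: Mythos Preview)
Your proposal is correct and follows essentially the same route as the paper: both proofs split $[\boldsymbol{k},a,\boldsymbol{l}]_{x,y}$ according to whether the cut lies in $\boldsymbol{k}$ or in $\boldsymbol{l}$, invoke Lemma~\ref{lem:alternating3} once for each block (with the same reindexing $j=r-i$ and the identifications $\overleftarrow{(\overleftarrow{\boldsymbol{k}^p})_j}=\boldsymbol{k}^{r-j}$, $(\overleftarrow{\boldsymbol{k}^p})^j=\overleftarrow{\boldsymbol{k}^p_{r-j}}$ you describe), and then deduce the star identity by applying $\tilde{S}$ together with the row-swap proposition. The only cosmetic difference is direction: the paper expands the LHS via Lemma~\ref{lem:alternating3} and then recognises the resulting inner sums as $[\boldsymbol{k}_{i'}]_{x,y}$ and $[\boldsymbol{l}^{j'}]_{x,y}$, whereas you expand those $[\cdot]_{x,y}$ factors on the RHS first and collapse the inner sums via Lemma~\ref{lem:alternating3}; these are the same computation read backwards.
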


\begin{proof}
 The first identity implies the second because
 \begin{align*}
  [\boldsymbol{k},a,\boldsymbol{l}]_{x,y}^{\star}
  &=(-1)^{r+s+1}\tilde{S}([\boldsymbol{k},a,\boldsymbol{l}]_{x,y})\\
  &=(-1)^{r+s+1}\sum_{i=0}^{r}(-1)^{r-i}\tilde{S}\biggl(\left[\begin{array}{c}\overleftarrow{\boldsymbol{l}}\\\boldsymbol{k}^i\end{array};a\right]\biggr)y^{\lvert\boldsymbol{k}^i\rvert+a+\lvert\boldsymbol{l}\rvert}*\tilde{S}([\boldsymbol{k}_i]_{x,y})\\
  &\phantom{{}={}}+(-1)^{r+s+1}\sum_{j=0}^{s}(-1)^j\tilde{S}\biggl(\left[\begin{array}{c}\boldsymbol{k}\\\overleftarrow{\boldsymbol{l}_j}\end{array};a\right]\biggr)x^{\lvert\boldsymbol{k}\rvert+a+\lvert\boldsymbol{l}_j\rvert}*\tilde{S}([\boldsymbol{l}^j]_{x,y})\\
  &=(-1)^{r+s+1}\sum_{i=0}^{r}(-1)^{r-i}(-1)^{r-i+s+1}\left[\begin{array}{c}\boldsymbol{k}^i\\\overleftarrow{\boldsymbol{l}}\end{array};a\right]y^{\lvert\boldsymbol{k}^i\rvert+a+\lvert\boldsymbol{l}\rvert}*(-1)^i[\boldsymbol{k}_i]_{x,y}^{\star}\\
  &\phantom{{}={}}+(-1)^{r+s+1}\sum_{j=0}^{s}(-1)^j(-1)^{r+j+1}\left[\begin{array}{c}\overleftarrow{\boldsymbol{l}_j}\\\boldsymbol{k}\end{array};a\right]x^{\lvert\boldsymbol{k}\rvert+a+\lvert\boldsymbol{l}_j\rvert}*(-1)^{s-j}[\boldsymbol{l}^j]_{x,y}^{\star}\\
  &=\sum_{i=0}^{r}(-1)^{r-i}\left[\begin{array}{c}\boldsymbol{k}^i\\\overleftarrow{\boldsymbol{l}}\end{array};a\right]y^{\lvert\boldsymbol{k}^i\rvert+a+\lvert\boldsymbol{l}\rvert}*[\boldsymbol{k}_i]_{x,y}^{\star}
    +\sum_{j=0}^{s}(-1)^j\left[\begin{array}{c}\overleftarrow{\boldsymbol{l}_j}\\\boldsymbol{k}\end{array};a\right]x^{\lvert\boldsymbol{k}\rvert+a+\lvert\boldsymbol{l}_j\rvert}*[\boldsymbol{l}^j]_{x,y}^{\star}.
 \end{align*}
 Lemma~\ref{lem:alternating3} shows that
 \begin{align*}
  [\boldsymbol{k},a,\boldsymbol{l}]_{x,y}
  &=\sum_{i=0}^{r}[\boldsymbol{k}_i]*[\overleftarrow{\boldsymbol{l}},a,\overleftarrow{\boldsymbol{k}^i}]x^{\lvert\boldsymbol{k}_i\rvert}y^{\lvert\boldsymbol{k}^i\rvert+a+\lvert\boldsymbol{l}\rvert}
    +\sum_{j=0}^{s}[\boldsymbol{k},a,\boldsymbol{l}_j]*[\overleftarrow{\boldsymbol{l}^j}]x^{\lvert\boldsymbol{k}\rvert+a+\lvert\boldsymbol{l}_j\rvert}y^{\lvert\boldsymbol{l}^j\rvert}\\
  &=\sum_{i=0}^{r}[\boldsymbol{k}_i]*\Biggl(\sum_{i'=i}^{r}(-1)^{r-i'}\left[\begin{array}{c}\overleftarrow{\boldsymbol{l}}\\\boldsymbol{k}^{i'}\end{array};a\right]*[\overleftarrow{\boldsymbol{k}^i_{i'}}]\Biggr)x^{\lvert\boldsymbol{k}_i\rvert}y^{\lvert\boldsymbol{k}^i\rvert+a+\lvert\boldsymbol{l}\rvert}\\
  &\hphantom{{}={}}+\sum_{j=0}^{s}\Biggl(\sum_{j'=0}^{j}(-1)^{j'}\left[\begin{array}{c}\boldsymbol{k}\\\overleftarrow{\boldsymbol{l}_{j'}}\end{array};a\right]*[\boldsymbol{l}^{j'}_j]\Biggr)*[\overleftarrow{\boldsymbol{l}^j}]x^{\lvert\boldsymbol{k}\rvert+a+\lvert\boldsymbol{l}_j\rvert}y^{\lvert\boldsymbol{l}^j\rvert}\\
  &=\sum_{i'=0}^{r}(-1)^{r-i'}\left[\begin{array}{c}\overleftarrow{\boldsymbol{l}}\\\boldsymbol{k}^{i'}\end{array};a\right]y^{\lvert\boldsymbol{k}^{i'}\rvert+a+\lvert\boldsymbol{l}\rvert}*\Biggl(\sum_{i=0}^{i'}[\boldsymbol{k}_i]*[\overleftarrow{\boldsymbol{k}^i_{i'}}]x^{\lvert\boldsymbol{k}_i\rvert}y^{\lvert\boldsymbol{k}^i_{i'}\rvert}\Biggr)\\
  &\phantom{{}={}}+\sum_{j'=0}^{s}(-1)^{j'}\left[\begin{array}{c}\boldsymbol{k}\\\overleftarrow{\boldsymbol{l}_{j'}}\end{array};a\right]x^{\lvert\boldsymbol{k}\rvert+a+\lvert\boldsymbol{l}_{j'}\rvert}*\Biggl(\sum_{j=j'}^{s}[\boldsymbol{l}^{j'}_j]*[\overleftarrow{\boldsymbol{l}^j}]x^{\lvert\boldsymbol{l}^{j'}_j\rvert}y^{\lvert\boldsymbol{l}^j\rvert}\Biggr)\\
  &=\sum_{i'=0}^{r}(-1)^{r-i'}\left[\begin{array}{c}\overleftarrow{\boldsymbol{l}}\\\boldsymbol{k}^{i'}\end{array};a\right]y^{\lvert\boldsymbol{k}^{i'}\rvert+a+\lvert\boldsymbol{l}\rvert}*[\boldsymbol{k}_{i'}]_{x,y}\\
  &\phantom{{}={}}+\sum_{j'=0}^{s}(-1)^{j'}\left[\begin{array}{c}\boldsymbol{k}\\\overleftarrow{\boldsymbol{l}_{j'}}\end{array};a\right]x^{\lvert\boldsymbol{k}\rvert+a+\lvert\boldsymbol{l}_{j'}\rvert}*[\boldsymbol{l}^{j'}]_{x,y},
 \end{align*}
 which completes the proof.
\end{proof}

\subsection{Sum formula for Schur multiple zeta values of anti-hook type}
\begin{thm}[Bachmann, Kadota, Suzuki, Yamamoto, and Yamasaki~\cite{BKSYY}]\label{thm:sumSchur}
 If $r$ and $s$ are nonnegative integers and $w$ is an integer with $w\ge r+s+2$, then
 \[
  \sum_{\substack{k_1+\dots+k_r+a+l_1+\dots+l_s=w\\k_1,\dots,k_r,l_1,\dots,l_s\ge1\\a\ge2}}
  \zeta\left(\begin{array}{c}k_1,\dots,k_r\\l_1,\dots,l_s\end{array};a\right)=\binom{w-1}{s}\zeta(w).
 \]
\end{thm}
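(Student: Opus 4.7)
The plan is to compute the bivariate generating function
\[G(A,B,W):=\sum_{r,s\ge0}\sum_{w\ge r+s+2}T(r,s,w)A^rB^sW^w,\]
where $T(r,s,w)$ denotes the left-hand side of the theorem, and to match it with the explicit closed form
\[\frac{W}{1-A}\bigl(\psi_1((1+B)W)-\psi_1((A+B)W)\bigr),\]
which is what one gets by summing $\binom{w-1}{s}\zeta(w)A^rB^sW^w$ over the allowed range via the elementary binomial identity $\sum_{s=0}^{w-2}\binom{w-1}{s}B^s\cdot(1-A^{w-s-1})/(1-A)=((1+B)^{w-1}-(A+B)^{w-1})/(1-A)$. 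Matching coefficients of $A^rB^sW^w$ then yields the theorem.

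The first step is to apply Lemma~\ref{lem:alternating2} (with $\boldsymbol{l}$ replaced by $\overleftarrow{\boldsymbol{l}}$) inside the Hopf algebra $\mathcal{I}$, which expands
\[\left[\begin{array}{c}\boldsymbol{k}\\\boldsymbol{l}\end{array};a\right]=\sum_{j=0}^s(-1)^j[\boldsymbol{k},a,l_s,\dots,l_{s-j+1}]*[\boldsymbol{l}_{s-j}]^\star.\]
Applying $Z$ and summing with weights $A^{\dep\boldsymbol{k}}B^{\dep\boldsymbol{l}}W^{|\boldsymbol{k}|+a+|\boldsymbol{l}|}$ under $a\ge2$, then reindexing by writing $\boldsymbol{l}$ as the concatenation of $\boldsymbol{n}$ (bottom part, depth $s-j$) with the reverse of $\boldsymbol{m}$ (top part, depth $j$), the generating function factorizes cleanly as
\[G(A,B,W)=H(A,-B,W)\cdot\frac{\Gamma_1((1+B)W)}{\Gamma_1(W)},\]
where $H(A,B,W):=\sum_{\boldsymbol{k},a\ge2,\boldsymbol{m}}\zeta(\boldsymbol{k},a,\boldsymbol{m})A^{\dep\boldsymbol{k}}B^{\dep\boldsymbol{m}}W^{|\boldsymbol{k}|+a+|\boldsymbol{m}|}$ and the second factor is evaluated by Proposition~\ref{prop:gen_func_zeta(k)}.

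It remains to evaluate $H$, which I expect equals $\frac{W}{1-A}(\psi_1((1-B)W)-\psi_1((A-B)W))\Gamma_1(W)/\Gamma_1((1-B)W)$, so that after substituting $B\mapsto-B$ the $\Gamma_1$ factors in $G$ cancel and the target closed form emerges. To derive this, I would first reorder $\sum_{\boldsymbol{k},a\ge1,\boldsymbol{m}}\zeta(\boldsymbol{k},a,\boldsymbol{m})A^{\dep\boldsymbol{k}}B^{\dep\boldsymbol{m}}W^{\mathrm{wt}}$ as a sum over a single index $\boldsymbol{p}$ with a marked entry, obtaining $(F(A,W)-F(B,W))/(A-B)$ with $F(C,W)=\Gamma_1(W)/\Gamma_1((1-C)W)$ via Proposition~\ref{prop:gen_func_zeta(k)}, and then subtract the $a=1$ contribution by an analogous reordering. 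The main obstacle is the clean handling of this subtraction in $\mathcal{Z}[T]$: the regularized values $\zeta(\boldsymbol{k},1,\boldsymbol{m})$ carry nontrivial $T$-dependence that must cancel precisely against $T$ hidden in $\Gamma_1$, since the final $G(A,B,W)$ lies in $\mathcal{Z}[A,B][[W]]$. A possibly cleaner alternative is to pin down $H$ from its two boundary values $H(A,0,W)$ (from Proposition~\ref{prop:gen_func_zeta(k,a)}) and $H(0,B,W)$ (derivable from the MZV-star sum formula together with the analogous factorization obtained from Lemma~\ref{lem:alternating3}), interpolating between them.
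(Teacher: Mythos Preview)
First, note that the paper does not prove Theorem~\ref{thm:sumSchur}; it is quoted from \cite{BKSYY} and used as input (for Proposition~\ref{prop:sumSchurgen} and hence for the main theorem). So there is no internal proof to compare against.

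As for your proposal, the reduction you write down is correct but tautological. Your factorization $G(A,B,W)=H(A,-B,W)\cdot\Gamma_1((1+B)W)/\Gamma_1(W)$ comes from Lemma~\ref{lem:alternating2}; applying the companion Lemma~\ref{lem:alternating3} in exactly the same way gives the inverse factorization
\[
 H(A,B,W)=G(A,-B,W)\cdot\frac{\Gamma_1(W)}{\Gamma_1((1-B)W)}.
\]
Thus computing $G$ and computing $H$ are equivalent problems, and passing from one to the other carries no information toward Theorem~\ref{thm:sumSchur}. In particular, the closed form you ``expect'' for $H$ is precisely Corollary~\ref{cor:gen_func_zeta(k,a,l)}, which in the paper's logic is a \emph{consequence} of Theorem~\ref{thm:sumSchur}, not an input to it.

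Your only genuinely independent line of attack is the splitting $H=H_{a\ge1}-H_{a=1}$. The first piece is indeed $(F(A,W)-F(B,W))/(A-B)$ by the marked-position argument together with Proposition~\ref{prop:gen_func_zeta(k)}. But the second piece,
\[
 H_{a=1}(A,B,W)=\sum_{\boldsymbol{k},\boldsymbol{m}}\zeta(\boldsymbol{k},1,\boldsymbol{m})A^{\dep\boldsymbol{k}}B^{\dep\boldsymbol{m}}W^{\lvert\boldsymbol{k}\rvert+1+\lvert\boldsymbol{m}\rvert},
\]
has no evident evaluation: concatenation is not the harmonic product, so nothing like Proposition~\ref{prop:gen_func_zeta(k)} applies, and the sum does not decouple into known generating functions. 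You acknowledge this obstacle but do not overcome it. The suggested alternative of ``interpolating'' $H$ from its boundary slices $H(A,0,W)$ and $H(0,B,W)$ is also insufficient, since two one-variable specializations of a bivariate power series do not determine it. As written, the proposal therefore has a genuine gap at the evaluation of $H$, and that evaluation is the entire content of the theorem.
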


\begin{rem}
 This theorem is a common generalization of the identities in Theorem~\ref{thm:sum_formula_MZV}.
\end{rem}

This theorem can be rephrased as follows:
\begin{prop}\label{prop:sumSchurgen}
 We have
 \[
  \sum_{\substack{\boldsymbol{k},\boldsymbol{l}\\a\ge2}}\zeta\left(\begin{array}{c}\boldsymbol{k}\\\boldsymbol{l}\end{array};a\right)A^{\dep\boldsymbol{k}}B^{\dep\boldsymbol{l}}W^{\lvert\boldsymbol{k}\rvert+a+\lvert\boldsymbol{l}\rvert}
  =\frac{W}{1-A}(\psi_1((1+B)W)-\psi_1((A+B)W))
 \]
 in $\mathcal{Z}[A,B][[W]]$.
\end{prop}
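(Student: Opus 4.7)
The plan is to proceed exactly as in the proofs of Propositions~\ref{prop:gen_func_zeta(k,a)} and~\ref{prop:gen_func_zeta_fin(k,a,l)}: invoke Theorem~\ref{thm:sumSchur} to replace the inner sum over indices of fixed depths $r,s$ and total weight $w$ by $\binom{w-1}{s}\zeta(w)$, and then carry out two elementary finite summations. Grouping terms gives
\begin{align*}
 \sum_{\substack{\boldsymbol{k},\boldsymbol{l}\\a\ge2}}\zeta\left(\begin{array}{c}\boldsymbol{k}\\\boldsymbol{l}\end{array};a\right)A^{\dep\boldsymbol{k}}B^{\dep\boldsymbol{l}}W^{\lvert\boldsymbol{k}\rvert+a+\lvert\boldsymbol{l}\rvert}
 &=\sum_{\substack{r,s\ge0\\w\ge r+s+2}}\binom{w-1}{s}\zeta(w)A^rB^sW^w.
\end{align*}

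Next, for each fixed $w\ge2$, I would perform the sum over $r\in\{0,\dots,w-s-2\}$, which is a finite geometric series yielding $(1-A^{w-s-1})/(1-A)$, and then the sum over $s\in\{0,\dots,w-2\}$. The resulting expression splits into the two pieces
\[
 \frac{1}{1-A}\sum_{s=0}^{w-2}\binom{w-1}{s}B^s\quad\text{and}\quad\frac{1}{1-A}\sum_{s=0}^{w-2}\binom{w-1}{s}B^sA^{w-s-1};
\]
completing each sum to the full binomial range $s\in\{0,\dots,w-1\}$ (the added $s=w-1$ terms $B^{w-1}$ cancel when the two pieces are subtracted) produces $(1+B)^{w-1}$ and $(A+B)^{w-1}$ respectively.

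Summing over $w\ge2$ then yields
\[
 \sum_{w=2}^{\infty}\frac{(1+B)^{w-1}-(A+B)^{w-1}}{1-A}\zeta(w)W^w=\frac{W}{1-A}(\psi_1((1+B)W)-\psi_1((A+B)W))
\]
by the definition of $\psi_1$, which is the desired right-hand side. The computation is routine and closely mirrors the two similar proofs already given in the paper, so the main obstacle is purely bookkeeping, namely handling the truncation of the binomial sum cleanly; there is no conceptual difficulty.
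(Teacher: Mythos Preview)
Your proposal is correct and follows essentially the same approach as the paper's own proof: both invoke Theorem~\ref{thm:sumSchur}, sum the geometric series in $r$, and then collapse the binomial sum in $s$ to obtain $(1+B)^{w-1}-(A+B)^{w-1}$ before recognizing $\psi_1$. Your version is in fact slightly more explicit about the cancellation of the $s=w-1$ boundary term, which the paper leaves implicit.
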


\begin{proof}
 Theorem~\ref{thm:sumSchur} shows that
 \begin{align*}
  \sum_{\substack{\boldsymbol{k},\boldsymbol{l}\\a\ge2}}\zeta\left(\begin{array}{c}\boldsymbol{k}\\\boldsymbol{l}\end{array};a\right)A^{\dep\boldsymbol{k}}B^{\dep\boldsymbol{l}}W^{\lvert\boldsymbol{k}\rvert+a+\lvert\boldsymbol{l}\rvert}
  &=\sum_{\substack{r,s\ge0\\w\ge r+s+2}}\sum_{\substack{\lvert\boldsymbol{k}\rvert+a+\lvert\boldsymbol{l}\rvert=w\\\dep\boldsymbol{k}=r,\dep\boldsymbol{l}=s\\a\ge2}}\zeta\left(\begin{array}{c}\boldsymbol{k}\\\boldsymbol{l}\end{array};a\right)A^rB^sW^w\\
  &=\sum_{\substack{r,s\ge0\\w\ge r+s+2}}\binom{w-1}{s}\zeta(w)A^rB^sW^w\\
  &=\sum_{w=2}^{\infty}\sum_{s=0}^{w-2}\sum_{r=0}^{w-s-2}\binom{w-1}{s}A^rB^s\zeta(w)W^w\\
  &=\sum_{w=2}^{\infty}\sum_{s=0}^{w-2}\binom{w-1}{s}\frac{1-A^{w-s-1}}{1-A}B^s\zeta(w)W^w\\
  &=\sum_{w=2}^{\infty}\frac{(1+B)^{w-1}-(A+B)^{w-1}}{1-A}\zeta(w)W^w\\
  &=\frac{W}{1-A}(\psi_1((1+B)W)-\psi_1((A+B)W)),
 \end{align*}
 as required.
\end{proof}

\begin{rem}\label{rem:not_true_[k,l,a]}
 It is \emph{not} the case that
 \[
  \sum_{\substack{\boldsymbol{k},\boldsymbol{l}\\a\ge2}}\left[\begin{array}{c}\boldsymbol{k}\\\boldsymbol{l}\end{array};a\right]A^{\dep\boldsymbol{k}}B^{\dep\boldsymbol{l}}W^{\lvert\boldsymbol{k}\rvert+a+\lvert\boldsymbol{l}\rvert}
  =\frac{W}{1-A}(\psi_{1,\mathcal{I}}((1+B)W)-\psi_{1,\mathcal{I}}((A+B)W))
 \]
 in $\mathcal{I}[A,B][[W]]$.
\end{rem}

\subsection{Relationship between the sum formulas for multiple zeta values and symmetric multiple zeta values}
The following proposition somehow explains the similarity between the sum formulas for multiple zeta(-star) values and symmetric multiple zeta(-star) values:
\begin{prop}\label{prop:relation_between_sum_formulas}
 If $\boldsymbol{k}$ and $\boldsymbol{l}$ are indices and $a$ is an integer with $a\ge2$, then we have
 \begin{align*}
  \zeta_S(\boldsymbol{k},a,\boldsymbol{l})
  &=\sum_{i=0}^{r}(-1)^{r-i}\zeta\left(\begin{array}{c}\overleftarrow{\boldsymbol{l}}\\\boldsymbol{k}^i\end{array};a\right)(-1)^{\lvert\boldsymbol{k}^i\rvert+a+\lvert\boldsymbol{l}\rvert}\zeta_S(\boldsymbol{k}_i)
    +\sum_{j=0}^{s}(-1)^j\zeta\left(\begin{array}{c}\boldsymbol{k}\\\overleftarrow{\boldsymbol{l}_j}\end{array};a\right)\zeta_S(\boldsymbol{l}^j),\\
  \zeta_S^{\star}(\boldsymbol{k},a,\boldsymbol{l})
  &=\sum_{i=0}^{r}(-1)^{r-i}\zeta\left(\begin{array}{c}\boldsymbol{k}^i\\\overleftarrow{\boldsymbol{l}}\end{array};a\right)(-1)^{\lvert\boldsymbol{k}^i\rvert+a+\lvert\boldsymbol{l}\rvert}\zeta_S^{\star}(\boldsymbol{k}_i)
    +\sum_{j=0}^{s}(-1)^j\zeta\left(\begin{array}{c}\overleftarrow{\boldsymbol{l}_j}\\\boldsymbol{k}\end{array};a\right)\zeta_S^{\star}(\boldsymbol{l}^j),
 \end{align*}
 where $r=\dep\boldsymbol{k}$ and $s=\dep\boldsymbol{l}$.
\end{prop}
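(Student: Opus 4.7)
The proposition is a direct consequence of Lemma~\ref{lem:key} once one observes how the specialization $x=1$, $y=-1$ interacts with the algebra homomorphism $Z\colon\mathcal{I}\to\mathcal{Z}[T]$. My plan is to simply apply $Z$ (extended to $\mathcal{I}[x,y]\to\mathcal{Z}[T][x,y]$) to both sides of each identity in Lemma~\ref{lem:key} and then set $x=1$ and $y=-1$.

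More concretely: the left-hand side of the first identity in Lemma~\ref{lem:key} evaluated at $x=1$, $y=-1$ is, by the very definition of $\zeta_S$, sent by $Z$ to $\zeta_S(\boldsymbol{k},a,\boldsymbol{l})$, since $Z([\boldsymbol{m}]_{1,-1})=\zeta_S(\boldsymbol{m})$ for every index $\boldsymbol{m}$. For the right-hand side, $Z$ turns the stuffle product $*$ into ordinary multiplication in $\mathcal{Z}[T]$ (because $Z$ is an algebra homomorphism), sends each anti-hook bracket $\left[\begin{array}{c}\boldsymbol{k}\\\boldsymbol{l}\end{array};a\right]$ to the corresponding Schur value $\zeta\left(\begin{array}{c}\boldsymbol{k}\\\boldsymbol{l}\end{array};a\right)$ (which is clear by induction from Definition~\ref{defn:Schur} and the fact that $Z$ respects $*$, $[\,\cdot\,]$, and $[\,\cdot\,]^{\star}$), and sends $[\boldsymbol{k}_i]_{x,y}$ at $x=1$, $y=-1$ to $\zeta_S(\boldsymbol{k}_i)$ and likewise for $[\boldsymbol{l}^j]_{x,y}$. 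The scalar $x^{\lvert\boldsymbol{k}\rvert+a+\lvert\boldsymbol{l}_j\rvert}$ becomes $1$, while $y^{\lvert\boldsymbol{k}^i\rvert+a+\lvert\boldsymbol{l}\rvert}$ becomes $(-1)^{\lvert\boldsymbol{k}^i\rvert+a+\lvert\boldsymbol{l}\rvert}$, which is exactly the sign appearing in the statement.

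For the star version, I apply the same procedure to the second identity in Lemma~\ref{lem:key}, using $Z([\boldsymbol{m}]_{1,-1}^{\star})=\zeta_S^{\star}(\boldsymbol{m})$ which follows from $Z_{x,y}([\boldsymbol{m}]^{\star})=\zeta_{x,y}^{\star}(\boldsymbol{m})$ at $(x,y)=(1,-1)$. The bookkeeping on the anti-hook symbols is identical to the non-star case, just with $\boldsymbol{k}^i$ and $\overleftarrow{\boldsymbol{l}}$ (resp.\ $\overleftarrow{\boldsymbol{l}_j}$ and $\boldsymbol{k}$) in the positions the second identity dictates.

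There is no serious obstacle here; the whole argument is a transport of Lemma~\ref{lem:key} through $Z$ combined with the specialization. The only minor point that needs to be mentioned explicitly is that although $Z$ takes values in $\mathcal{Z}[T]$, each $\zeta_S(\boldsymbol{m})$ is a priori in $\mathcal{Z}[T]$ but in fact lies in $\mathcal{Z}$ (as recalled in the introduction), so both sides of the resulting equations are genuine elements of $\mathcal{Z}$ and no reduction modulo $\zeta(2)\mathcal{Z}$ is needed at this stage.
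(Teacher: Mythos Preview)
Your proof is correct and follows exactly the same approach as the paper: apply $Z$ to the identities in Lemma~\ref{lem:key} and specialize $x=1$, $y=-1$. The additional justifications you spell out (that $Z$ is an algebra homomorphism turning $*$ into multiplication, that it sends the anti-hook bracket to the corresponding Schur value, and that the $\zeta_S$-values lie in $\mathcal{Z}$) are all valid and simply make explicit what the paper leaves implicit.
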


\begin{proof}
 Apply $Z$ to the identities in Lemma~\ref{lem:key}, and set $x=1$ and $y=-1$.
\end{proof}

We now deduce Theorem~\ref{thm:sum_formula_SMZV} from Theorem~\ref{thm:sumSchur}, which is a generalization of Theorem~\ref{thm:sum_formula_MZV}, with the aid of Proposition~\ref{prop:relation_between_sum_formulas}.
Let $r$ and $s$ be nonnegative integers and let $w$ be an integer with $w\ge r+s+2$.
Then by summing the identities in Proposition~\ref{prop:relation_between_sum_formulas} over all indices $\boldsymbol{k}$ and $\boldsymbol{l}$ and all integers $a\ge2$ satisfying $\dep\boldsymbol{k}=r$, $\dep\boldsymbol{l}=s$, and $\lvert\boldsymbol{k}\rvert+a+\lvert\boldsymbol{l}\rvert=w$ and
by using Theorem~\ref{thm:sumSchur} and the fact that any symmetric sum of $\zeta_S$ of depth greater than $0$ is $0$ modulo $\zeta(2)\mathcal{Z}$, we obtain
\begin{align*}
 &\sum_{\substack{\lvert\boldsymbol{k}\rvert+a+\lvert\boldsymbol{l}\rvert=w\\\dep\boldsymbol{k}=r,\dep\boldsymbol{l}=s\\a\ge2}}\zeta_S(\boldsymbol{k},a,\boldsymbol{l})\\
 &=\sum_{i=0}^{r}(-1)^{r-i}\sum_{\substack{w_1+w_2=w\\w_1\ge2,w_2\ge0}}\Biggl(\sum_{\substack{\lvert\boldsymbol{k}\rvert+a+\lvert\boldsymbol{l}\rvert=w_1\\\dep\boldsymbol{k}=r-i,\dep\boldsymbol{l}=s\\a\ge2}}\zeta\left(\begin{array}{c}\boldsymbol{l}\\\boldsymbol{k}\end{array};a\right)\Biggr)(-1)^{w_1}\Biggl(\sum_{\substack{\lvert\boldsymbol{k}\rvert=w_2\\\dep\boldsymbol{k}=i}}\zeta_S(\boldsymbol{k})\Biggr)\\
 &\phantom{{}={}}+\sum_{j=0}^{s}(-1)^j\sum_{\substack{w_1+w_2=w\\w_1\ge2,w_2\ge0}}\Biggl(\sum_{\substack{\lvert\boldsymbol{k}\rvert+a+\lvert\boldsymbol{l}\rvert=w_1\\\dep\boldsymbol{k}=r,\dep\boldsymbol{l}=j\\a\ge2}}\zeta\left(\begin{array}{c}\boldsymbol{k}\\\boldsymbol{l}\end{array};a\right)\Biggr)\Biggl(\sum_{\substack{\lvert\boldsymbol{l}\rvert=w_2\\\dep\boldsymbol{l}=s-j}}\zeta_S(\boldsymbol{l})\Biggr)\\
 &\equiv(-1)^r\Biggl(\sum_{\substack{\lvert\boldsymbol{k}\rvert+a+\lvert\boldsymbol{l}\rvert=w\\\dep\boldsymbol{k}=r,\dep\boldsymbol{l}=s\\a\ge2}}\zeta\left(\begin{array}{c}\boldsymbol{l}\\\boldsymbol{k}\end{array};a\right)\Biggr)(-1)^w
  +(-1)^s\Biggl(\sum_{\substack{\lvert\boldsymbol{k}\rvert+a+\lvert\boldsymbol{l}\rvert=w\\\dep\boldsymbol{k}=r,\dep\boldsymbol{l}=s\\a\ge2}}\zeta\left(\begin{array}{c}\boldsymbol{k}\\\boldsymbol{l}\end{array};a\right)\Biggr)\\
 &=(-1)^r\binom{w-1}{r}\zeta(w)(-1)^w+(-1)^s\binom{w-1}{s}\zeta(w)\\
 &\equiv\biggl(-(-1)^r\binom{w-1}{r}+(-1)^s\binom{w-1}{s}\biggr)\zeta(w)
\end{align*}
and
\begin{align*}
 &\sum_{\substack{\lvert\boldsymbol{k}\rvert+a+\lvert\boldsymbol{l}\rvert=w\\\dep\boldsymbol{k}=r,\dep\boldsymbol{l}=s\\a\ge2}}\zeta_S^{\star}(\boldsymbol{k},a,\boldsymbol{l})\\
 &=\sum_{i=0}^{r}(-1)^{r-i}\sum_{\substack{w_1+w_2=w\\w_1\ge2,w_2\ge0}}\Biggl(\sum_{\substack{\lvert\boldsymbol{k}\rvert+a+\lvert\boldsymbol{l}\rvert=w_1\\\dep\boldsymbol{k}=r-i,\dep\boldsymbol{l}=s\\a\ge2}}\zeta\left(\begin{array}{c}\boldsymbol{k}\\\boldsymbol{l}\end{array};a\right)\Biggr)(-1)^{w_1}\Biggl(\sum_{\substack{\lvert\boldsymbol{k}\rvert=w_2\\\dep\boldsymbol{k}=i}}\zeta_S^{\star}(\boldsymbol{k})\Biggr)\\
 &\phantom{{}={}}+\sum_{j=0}^{s}(-1)^j\sum_{\substack{w_1+w_2=w\\w_1\ge2,w_2\ge0}}\Biggl(\sum_{\substack{\lvert\boldsymbol{k}\rvert+a+\lvert\boldsymbol{l}\rvert=w_1\\\dep\boldsymbol{k}=r,\dep\boldsymbol{l}=j\\a\ge2}}\zeta\left(\begin{array}{c}\boldsymbol{l}\\\boldsymbol{k}\end{array};a\right)\Biggr)\Biggl(\sum_{\substack{\lvert\boldsymbol{l}\rvert=w_2\\\dep\boldsymbol{l}=s-j}}\zeta_S^{\star}(\boldsymbol{l})\Biggr)\\
 &\equiv(-1)^r\Biggl(\sum_{\substack{\lvert\boldsymbol{k}\rvert+a+\lvert\boldsymbol{l}\rvert=w\\\dep\boldsymbol{k}=r,\dep\boldsymbol{l}=s\\a\ge2}}\zeta\left(\begin{array}{c}\boldsymbol{k}\\\boldsymbol{l}\end{array};a\right)\Biggr)(-1)^w
  +(-1)^s\Biggl(\sum_{\substack{\lvert\boldsymbol{k}\rvert+a+\lvert\boldsymbol{l}\rvert=w\\\dep\boldsymbol{k}=r,\dep\boldsymbol{l}=s\\a\ge2}}\zeta\left(\begin{array}{c}\boldsymbol{l}\\\boldsymbol{k}\end{array};a\right)\Biggr)\\
 &=(-1)^r\binom{w-1}{s}\zeta(w)(-1)^w+(-1)^s\binom{w-1}{r}\zeta(w)\\
 &\equiv\biggl((-1)^s\binom{w-1}{r}-(-1)^r\binom{w-1}{s}\biggr)\zeta(w)
\end{align*}
modulo $\zeta(2)\mathcal{Z}$.

\subsection{Proof of our main theorem}
We begin with computing the generating functions
\[
 \sum_{\substack{\boldsymbol{k},\boldsymbol{l}\\a\ge2}}[\boldsymbol{k},a,\boldsymbol{l}]_{x,y}A^{\dep\boldsymbol{k}}B^{\dep\boldsymbol{l}}W^{\lvert\boldsymbol{k}\rvert+a+\lvert\boldsymbol{l}\rvert},
 \sum_{\substack{\boldsymbol{k},\boldsymbol{l}\\a\ge2}}[\boldsymbol{k},a,\boldsymbol{l}]_{x,y}^{\star}A^{\dep\boldsymbol{k}}B^{\dep\boldsymbol{l}}W^{\lvert\boldsymbol{k}\rvert+a+\lvert\boldsymbol{l}\rvert}\in\mathcal{I}[x,y][A,B][[W]].
\]
Since it is unlikely that the generating functions can be written in terms of $\Gamma_{1.\mathcal{I}}(W)$ only, we shall use the generating function
\[
 F_{\mathcal{I}}(A,B,W)
 =\sum_{\substack{\boldsymbol{k},\boldsymbol{l}\\a\ge2}}\left[\begin{array}{c}\boldsymbol{k}\\\boldsymbol{l}\end{array};a\right]A^{\dep\boldsymbol{k}}B^{\dep\boldsymbol{l}}W^{\lvert\boldsymbol{k}\rvert+a+\lvert\boldsymbol{l}\rvert}
 \in\mathcal{I}[A,B][[W]],
\]
which appeared in Remark~\ref{rem:not_true_[k,l,a]}.
Note that
\begin{align*}
 \tilde{S}(F_{\mathcal{I}}(A,B,W))
 &=\sum_{\substack{\boldsymbol{k},\boldsymbol{l}\\a\ge2}}\tilde{S}\left(\left[\begin{array}{c}\boldsymbol{k}\\\boldsymbol{l}\end{array};a\right]\right)A^{\dep\boldsymbol{k}}B^{\dep\boldsymbol{l}}W^{\lvert\boldsymbol{k}\rvert+a+\lvert\boldsymbol{l}\rvert}\\
 &=\sum_{\substack{\boldsymbol{k},\boldsymbol{l}\\a\ge2}}(-1)^{\dep\boldsymbol{k}+\dep\boldsymbol{l}+1}\left[\begin{array}{c}\boldsymbol{l}\\\boldsymbol{k}\end{array};a\right]A^{\dep\boldsymbol{k}}B^{\dep\boldsymbol{l}}W^{\lvert\boldsymbol{k}\rvert+a+\lvert\boldsymbol{l}\rvert}\\
 &=-F_{\mathcal{I}}(-B,-A,W).
\end{align*}

\begin{prop}\label{prop:gen_func_[k,a,l]_xy}
 We have
 \begin{align*}
  \sum_{\substack{\boldsymbol{k},\boldsymbol{l}\\a\ge2}}[\boldsymbol{k},a,\boldsymbol{l}]_{x,y}A^{\dep\boldsymbol{k}}B^{\dep\boldsymbol{l}}W^{\lvert\boldsymbol{k}\rvert+a+\lvert\boldsymbol{l}\rvert}
  &=F_{\mathcal{I}}(B,-A,yW)\frac{\Gamma_{1,\mathcal{I}}(xW)\Gamma_{1,\mathcal{I}}(yW)}{\Gamma_{1,\mathcal{I}}(x(1-A)W)\Gamma_{1,\mathcal{I}}(y(1-A)W)}\\
  &\hphantom{{}={}}+F_{\mathcal{I}}(A,-B,xW)\frac{\Gamma_{1,\mathcal{I}}(xW)\Gamma_{1,\mathcal{I}}(yW)}{\Gamma_{1,\mathcal{I}}(x(1-B)W)\Gamma_{1,\mathcal{I}}(y(1-B)W)},\\
  \sum_{\substack{\boldsymbol{k},\boldsymbol{l}\\a\ge2}}[\boldsymbol{k},a,\boldsymbol{l}]_{x,y}^{\star}A^{\dep\boldsymbol{k}}B^{\dep\boldsymbol{l}}W^{\lvert\boldsymbol{k}\rvert+a+\lvert\boldsymbol{l}\rvert}
  &=F_{\mathcal{I}}(-A,B,yW)\frac{\Gamma_{1,\mathcal{I}}(x(1+A)W)\Gamma_{1,\mathcal{I}}(y(1+A)W)}{\Gamma_{1,\mathcal{I}}(xW)\Gamma_{1,\mathcal{I}}(yW)}\\
  &\hphantom{{}={}}+F_{\mathcal{I}}(-B,A,xW)\frac{\Gamma_{1,\mathcal{I}}(x(1+B)W)\Gamma_{1,\mathcal{I}}(y(1+B)W)}{\Gamma_{1,\mathcal{I}}(xW)\Gamma_{1,\mathcal{I}}(yW)}\\
 \end{align*}
 in $\mathcal{I}[x,y][A,B][[W]]$.
\end{prop}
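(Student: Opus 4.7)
The plan is to take the identities of Lemma~\ref{lem:key} term by term, multiply by $A^{\dep\boldsymbol{k}}B^{\dep\boldsymbol{l}}W^{\lvert\boldsymbol{k}\rvert+a+\lvert\boldsymbol{l}\rvert}$, and sum over all indices $\boldsymbol{k},\boldsymbol{l}$ and integers $a\ge2$. Each of the two sums appearing on the right-hand side of Lemma~\ref{lem:key} should factor cleanly, via a change of summation variables, into a product (under $*$) of (i) a sum recognizable as $F_{\mathcal{I}}$ evaluated at a suitable substitution of its three arguments, and (ii) a generating function already computed in Proposition~\ref{prop:gen_func_kxy}.

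Concretely, for the first identity, treat the first sum of Lemma~\ref{lem:key}: write $\boldsymbol{k}'=\boldsymbol{k}_i$ and $\boldsymbol{k}''=\boldsymbol{k}^i$, so that the joint index data $(\boldsymbol{k},\boldsymbol{l},a,i)$ is in bijection with $(\boldsymbol{k}',\boldsymbol{k}'',\boldsymbol{l},a)$ via concatenation, with $\dep\boldsymbol{k}=\dep\boldsymbol{k}'+\dep\boldsymbol{k}''$ and $\lvert\boldsymbol{k}\rvert=\lvert\boldsymbol{k}'\rvert+\lvert\boldsymbol{k}''\rvert$. The sign $(-1)^{r-i}$ becomes $(-1)^{\dep\boldsymbol{k}''}$, and the factor $y^{\lvert\boldsymbol{k}^i\rvert+a+\lvert\boldsymbol{l}\rvert}$ combines with $W^{\lvert\boldsymbol{k}''\rvert+a+\lvert\boldsymbol{l}\rvert}$ into $(yW)^{\lvert\boldsymbol{k}''\rvert+a+\lvert\boldsymbol{l}\rvert}$. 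Reversing $\boldsymbol{l}$ (a bijection on indices of each fixed depth) converts $\bigl[\overleftarrow{\boldsymbol{l}}\;\;\boldsymbol{k}''\;;a\bigr]$ into the standard Schur bracket summed in $F_{\mathcal{I}}$. The resulting sum over $(\boldsymbol{k}'',\boldsymbol{l},a)$ is $F_{\mathcal{I}}(B,-A,yW)$ (with $B$ matching the top row $\boldsymbol{l}$ and $-A$ the bottom row $\boldsymbol{k}''$), and the remaining sum over $\boldsymbol{k}'$ is exactly $\sum_{\boldsymbol{k}'}[\boldsymbol{k}']_{x,y}A^{\dep\boldsymbol{k}'}W^{\lvert\boldsymbol{k}'\rvert}$, which Proposition~\ref{prop:gen_func_kxy} evaluates to $\Gamma_{1,\mathcal{I}}(xW)\Gamma_{1,\mathcal{I}}(yW)/[\Gamma_{1,\mathcal{I}}(x(1-A)W)\Gamma_{1,\mathcal{I}}(y(1-A)W)]$. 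The second sum of Lemma~\ref{lem:key} is handled symmetrically with $\boldsymbol{l}'=\boldsymbol{l}_j$ and $\boldsymbol{l}''=\boldsymbol{l}^j$, reversing $\boldsymbol{l}'$, absorbing $x^{\cdots}W^{\cdots}$ into $xW$, and producing $F_{\mathcal{I}}(A,-B,xW)$ times the analogous Gamma ratio with $A$ replaced by $B$. Adding the two contributions gives the first identity.

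For the star version one can either repeat the same manipulation using the second identity of Lemma~\ref{lem:key} (tracking the slightly different placement of $\boldsymbol{k}^i$ and $\overleftarrow{\boldsymbol{l}_j}$ in the Schur brackets), or apply $\tilde{S}$ to the first identity, using $\tilde{S}(F_{\mathcal{I}}(A,B,W))=-F_{\mathcal{I}}(-B,-A,W)$ established just before the proposition, together with $\tilde{S}([\boldsymbol{k},a,\boldsymbol{l}]_{x,y})=(-1)^{r+s+1}[\boldsymbol{k},a,\boldsymbol{l}]_{x,y}^{\star}$. Either route yields the stated formula directly.

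The main obstacle will be purely bookkeeping: keeping straight which of the two variables $A,B$ (and which of $xW,yW$) feeds into which row of the Schur bracket after the reversal of $\boldsymbol{l}$ (or $\boldsymbol{l}'$), and which signs come from $(-1)^{r-i}$ or $(-1)^j$ versus from relabelling $B\leadsto-A$ (or $A\leadsto-B$) inside $F_{\mathcal{I}}$. The structural content is already done: Lemma~\ref{lem:key} separates the $[\boldsymbol{k}_i]_{x,y}$ and $[\boldsymbol{l}^j]_{x,y}$ factors from the Schur parts, and Proposition~\ref{prop:gen_func_kxy} evaluates the former; nothing else is needed.
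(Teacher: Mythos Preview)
Your proposal is correct and follows essentially the same approach as the paper's own proof: summing the first identity of Lemma~\ref{lem:key} with the change of variables $(\boldsymbol{k}_i,\boldsymbol{k}^i)\leftrightarrow(\boldsymbol{k}',\boldsymbol{k}'')$ (and symmetrically for the second sum), factoring into $F_{\mathcal{I}}$ times the generating function of Proposition~\ref{prop:gen_func_kxy}, and then obtaining the star version by applying $\tilde{S}$. The paper carries out exactly this computation, and your bookkeeping remarks about which variables feed into which slot of $F_{\mathcal{I}}$ match the paper's calculation.
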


\begin{proof}
 The first identity implies the second because
 \begin{align*}
  &\sum_{\substack{\boldsymbol{k},\boldsymbol{l}\\a\ge2}}[\boldsymbol{k},a,\boldsymbol{l}]_{x,y}^{\star}A^{\dep\boldsymbol{k}}B^{\dep\boldsymbol{l}}W^{\lvert\boldsymbol{k}\rvert+a+\lvert\boldsymbol{l}\rvert}\\
  &=-\tilde{S}\Biggl(\sum_{\substack{\boldsymbol{k},\boldsymbol{l}\\a\ge2}}[\boldsymbol{k},a,\boldsymbol{l}]_{x,y}(-A)^{\dep\boldsymbol{k}}(-B)^{\dep\boldsymbol{l}}W^{\lvert\boldsymbol{k}\rvert+a+\lvert\boldsymbol{l}\rvert}\Biggr)\\
  &=-\tilde{S}\biggl(F_{\mathcal{I}}(-B,A,yW)\frac{\Gamma_{1,\mathcal{I}}(xW)\Gamma_{1,\mathcal{I}}(yW)}{\Gamma_{1,\mathcal{I}}(x(1+A)W)\Gamma_{1,\mathcal{I}}(y(1+A)W)}\\
  &\hphantom{{}=-\tilde{S}\biggl(}+F_{\mathcal{I}}(-A,B,xW)\frac{\Gamma_{1,\mathcal{I}}(xW)\Gamma_{1,\mathcal{I}}(yW)}{\Gamma_{1,\mathcal{I}}(x(1+B)W)\Gamma_{1,\mathcal{I}}(y(1+B)W)}\biggr)\\
  &=F_{\mathcal{I}}(-A,B,yW)\frac{\Gamma_{1,\mathcal{I}}(x(1+A)W)\Gamma_{1,\mathcal{I}}(y(1+A)W)}{\Gamma_{1,\mathcal{I}}(xW)\Gamma_{1,\mathcal{I}}(yW)}+F_{\mathcal{I}}(-B,A,xW)\frac{\Gamma_{1,\mathcal{I}}(x(1+B)W)\Gamma_{1,\mathcal{I}}(y(1+B)W)}{\Gamma_{1,\mathcal{I}}(xW)\Gamma_{1,\mathcal{I}}(yW)}.
 \end{align*}
 Lemma~\ref{lem:key} shows that
 \begin{align*}
  &\sum_{\substack{\boldsymbol{k},\boldsymbol{l}\\a\ge2}}[\boldsymbol{k},a,\boldsymbol{l}]_{x,y}A^{\dep\boldsymbol{k}}B^{\dep\boldsymbol{l}}W^{\lvert\boldsymbol{k}\rvert+a+\lvert\boldsymbol{l}\rvert}\\
  &=\sum_{r,s=0}^{\infty}\sum_{\substack{\dep\boldsymbol{k}=r,\dep\boldsymbol{l}=s\\a\ge2}}[\boldsymbol{k},a,\boldsymbol{l}]_{x,y}A^rB^sW^{\lvert\boldsymbol{k}\rvert+a+\lvert\boldsymbol{l}\rvert}\\
  &=\sum_{r,s=0}^{\infty}\sum_{\substack{\dep\boldsymbol{k}=r,\dep\boldsymbol{l}=s\\a\ge2}}\sum_{i=0}^{r}(-1)^{r-i}\left[\begin{array}{c}\overleftarrow{\boldsymbol{l}}\\\boldsymbol{k}^i\end{array};a\right]y^{\lvert\boldsymbol{k}^i\rvert+a+\lvert\boldsymbol{l}\rvert}*[\boldsymbol{k}_i]_{x,y}A^rB^sW^{\lvert\boldsymbol{k}\rvert+a+\lvert\boldsymbol{l}\rvert}\\
  &\hphantom{{}={}}+\sum_{r,s=0}^{\infty}\sum_{\substack{\dep\boldsymbol{k}=r,\dep\boldsymbol{l}=s\\a\ge2}}\sum_{j=0}^{s}(-1)^j\left[\begin{array}{c}\boldsymbol{k}\\\overleftarrow{\boldsymbol{l}_j}\end{array};a\right]x^{\lvert\boldsymbol{k}\rvert+a+\lvert\boldsymbol{l}_j\rvert}*[\boldsymbol{l}^j]_{x,y}A^rB^sW^{\lvert\boldsymbol{k}\rvert+a+\lvert\boldsymbol{l}\rvert}\\
  &=\Biggl(\sum_{i,s=0}^{\infty}\sum_{\substack{\dep\boldsymbol{k}=i,\dep\boldsymbol{l}=s\\a\ge2}}\left[\begin{array}{c}\boldsymbol{l}\\\boldsymbol{k}\end{array};a\right](-A)^iB^s(yW)^{\lvert\boldsymbol{k}\rvert+a+\lvert\boldsymbol{l}\rvert}\Biggr)\Biggl(\sum_{i=0}^{\infty}\sum_{\dep\boldsymbol{k}=i}[\boldsymbol{k}]_{x,y}A^iW^{\lvert\boldsymbol{k}\rvert}\Biggr)\\
  &\hphantom{{}={}}+\Biggl(\sum_{r,j=0}^{\infty}\sum_{\substack{\dep\boldsymbol{k}=r,\dep\boldsymbol{l}=j\\a\ge2}}\left[\begin{array}{c}\boldsymbol{k}\\\boldsymbol{l}\end{array};a\right]A^r(-B)^j(xW)^{\lvert\boldsymbol{k}\rvert+a+\lvert\boldsymbol{l}\rvert}\Biggr)\Biggl(\sum_{j=0}^{\infty}\sum_{\dep\boldsymbol{l}=j}[\boldsymbol{l}]_{x,y}B^jW^{\lvert\boldsymbol{l}\rvert}\Biggr)\\
  &=\Biggl(\sum_{\substack{\boldsymbol{k},\boldsymbol{l}\\a\ge2}}\left[\begin{array}{c}\boldsymbol{l}\\\boldsymbol{k}\end{array};a\right](-A)^{\dep\boldsymbol{k}}B^{\dep\boldsymbol{l}}(yW)^{\lvert\boldsymbol{k}\rvert+a+\lvert\boldsymbol{l}\rvert}\Biggr)\Biggl(\sum_{\boldsymbol{k}}[\boldsymbol{k}]_{x,y}A^{\dep\boldsymbol{k}}W^{\lvert\boldsymbol{k}\rvert}\Biggr)\\
  &\hphantom{{}={}}+\Biggl(\sum_{\substack{\boldsymbol{k},\boldsymbol{l}\\a\ge2}}\left[\begin{array}{c}\boldsymbol{k}\\\boldsymbol{l}\end{array};a\right]A^{\dep\boldsymbol{k}}(-B)^{\dep\boldsymbol{l}}(xW)^{\lvert\boldsymbol{k}\rvert+a+\lvert\boldsymbol{l}\rvert}\Biggr)\Biggl(\sum_{\boldsymbol{l}}[\boldsymbol{l}]_{x,y}B^{\dep\boldsymbol{l}}W^{\lvert\boldsymbol{l}\rvert}\Biggr)\\
  &=F_{\mathcal{I}}(B,-A,yW)\frac{\Gamma_{1,\mathcal{I}}(xW)\Gamma_{1,\mathcal{I}}(yW)}{\Gamma_{1,\mathcal{I}}(x(1-A)W)\Gamma_{1,\mathcal{I}}(y(1-A)W)}+F_{\mathcal{I}}(A,-B,xW)\frac{\Gamma_{1,\mathcal{I}}(xW)\Gamma_{1,\mathcal{I}}(yW)}{\Gamma_{1,\mathcal{I}}(x(1-B)W)\Gamma_{1,\mathcal{I}}(y(1-B)W)}
 \end{align*}
 by Proposition~\ref{prop:gen_func_kxy}. This completes the proof.
\end{proof}

\begin{thm}[Main theorem]\label{thm:gen_func_zeta_xy(k,a,l)}
 We have
 \begin{align*}
  &\sum_{\substack{\boldsymbol{k},\boldsymbol{l}\\a\ge2}}\zeta_{x,y}(\boldsymbol{k},a,\boldsymbol{l})A^{\dep\boldsymbol{k}}B^{\dep\boldsymbol{l}}W^{\lvert\boldsymbol{k}\rvert+a+\lvert\boldsymbol{l}\rvert}\\
  &=\frac{yW}{1-B}(\psi_1(y(1-A)W)-\psi_1(y(B-A)W))\frac{\Gamma_1(xW)\Gamma_1(yW)}{\Gamma_1(x(1-A)W)\Gamma_1(y(1-A)W)}\\
  &\hphantom{{}={}}+\frac{xW}{1-A}(\psi_1(x(1-B)W)-\psi_1(x(A-B)W))\frac{\Gamma_1(xW)\Gamma_1(yW)}{\Gamma_1(x(1-B)W)\Gamma_1(y(1-B)W)},\\
  &\sum_{\substack{\boldsymbol{k},\boldsymbol{l}\\a\ge2}}\zeta_{x,y}^{\star}(\boldsymbol{k},a,\boldsymbol{l})A^{\dep\boldsymbol{k}}B^{\dep\boldsymbol{l}}W^{\lvert\boldsymbol{k}\rvert+a+\lvert\boldsymbol{l}\rvert}\\
  &=\frac{yW}{1+A}(\psi_1(y(1+B)W)-\psi_1(y(B-A)W))\frac{\Gamma_1(x(1+A)W)\Gamma_1(y(1+A)W)}{\Gamma_1(xW)\Gamma_1(yW)}\\
  &\hphantom{{}={}}+\frac{xW}{1+B}(\psi_1(x(1+A)W)-\psi_1(x(A-B)W))\frac{\Gamma_1(x(1+B)W)\Gamma_1(y(1+B)W)}{\Gamma_1(xW)\Gamma_1(yW)}
 \end{align*}
 in $\mathcal{Z}[T][x,y][A,B][[W]]$.
\end{thm}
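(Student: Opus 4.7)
The theorem is an almost immediate consequence of two earlier results: the ``formal'' version of the generating-function identity, Proposition~\ref{prop:gen_func_[k,a,l]_xy}, living in the Hopf algebra $\mathcal{I}$, together with the Schur sum formula in generating-function form, Proposition~\ref{prop:sumSchurgen}. The plan is to apply the $\mathbb{Q}$-algebra homomorphism $Z\colon\mathcal{I}\to\mathcal{Z}[T]$ coefficient-wise to the two identities in Proposition~\ref{prop:gen_func_[k,a,l]_xy}, viewed as identities in the ring $\mathcal{I}[x,y][A,B][[W]]$. Since $Z$ is linear and respects the harmonic product $*$, and since $Z([\boldsymbol{k}]_{x,y})=\zeta_{x,y}(\boldsymbol{k})$ and $Z([\boldsymbol{k}]_{x,y}^{\star})=\zeta_{x,y}^{\star}(\boldsymbol{k})$ by the construction of $Z_{x,y}$, the left-hand sides transform term-by-term into the desired generating functions in $\mathcal{Z}[T][x,y][A,B][[W]]$.

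For the right-hand sides, I would use two evaluations. First, $Z(\Gamma_{1,\mathcal{I}}(W))=\Gamma_1(W)$, noted just before Proposition~\ref{prop:gen_func_zeta(k)}; applied to the four ratios of $\Gamma_{1,\mathcal{I}}$-factors appearing in Proposition~\ref{prop:gen_func_[k,a,l]_xy}, this yields the four ratios of $\Gamma_1$-factors in the theorem. Second, Proposition~\ref{prop:sumSchurgen} gives
\[
 Z(F_{\mathcal{I}}(A,B,W))=\frac{W}{1-A}\bigl(\psi_1((1+B)W)-\psi_1((A+B)W)\bigr).
\]
Substituting the four argument triples $(B,-A,yW)$, $(A,-B,xW)$, $(-A,B,yW)$, $(-B,A,xW)$ into this formula produces, by a direct algebraic check, exactly the four rational expressions in $\psi_1$ on the right-hand side of the theorem; pairing each with the matching $\Gamma_1$-ratio as dictated by Proposition~\ref{prop:gen_func_[k,a,l]_xy} yields both displayed identities.

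There is no serious obstacle at this stage; the genuine work has already been absorbed into the proof of Proposition~\ref{prop:gen_func_[k,a,l]_xy}, which in turn rests on Lemma~\ref{lem:key} and the generating-function identities of Proposition~\ref{prop:gen_func_kxy}. The only formal point to verify is that $Z$ extends coefficient-wise to a well-defined ring homomorphism on the ambient power series ring, so that it commutes with the multiplications and infinite sums displayed; this is routine because the indexing is by weight and each coefficient of $W^n$ involves only finitely many symbols $[\boldsymbol{k}]\in\mathcal{I}$.
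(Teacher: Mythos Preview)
Your proposal is correct and follows exactly the same route as the paper: apply $Z$ coefficient-wise to the identities of Proposition~\ref{prop:gen_func_[k,a,l]_xy}, use $Z(\Gamma_{1,\mathcal{I}}(W))=\Gamma_1(W)$ for the $\Gamma$-factors, and evaluate $Z(F_{\mathcal{I}}(A,B,W))$ via Proposition~\ref{prop:sumSchurgen} at the four argument triples. The paper's own proof is in fact more terse than your write-up, merely pointing to these two propositions.
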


\begin{proof}
 Apply $Z$ to the identities in Proposition~\ref{prop:gen_func_[k,a,l]_xy}, and having
 \[
  Z(F_{\mathcal{I}}(A,B,W))=\sum_{\substack{\boldsymbol{k},\boldsymbol{l}\\a\ge2}}\zeta\left(\begin{array}{c}\boldsymbol{k}\\\boldsymbol{l}\end{array};a\right)A^{\dep\boldsymbol{k}}B^{\dep\boldsymbol{l}}W^{\lvert\boldsymbol{k}\rvert+a+\lvert\boldsymbol{l}\rvert}
 \]
 in mind, use Proposition~\ref{prop:sumSchurgen}.
\end{proof}

\section*{Acknowledgements}
This work was supported by JSPS KAKENHI Grant Numbers JP18J00982, JP18K03243, and JP18K13392.

\end{document}